\newtheorem{thm}{Theorem}[section]
\newtheorem{lem}{Lemma}[section]
\newtheorem{cor}{Corollary}[section]
\newtheorem{prop}{Proposition}[section]
\newtheorem{rem}{Remark}[section]
\begin{document}
\numberwithin{equation}{section}

\title[   the quaternionic Monge-Amp\`{e}re operator and   plurisubharmonic functions  ]
 {\bf    the quaternionic Monge-Amp\`{e}re operator and plurisubharmonic functions  on the
Heisenberg group}
\author{  Wei Wang}
\thanks{
Supported by National Nature Science Foundation in China (No.
11571305)  }\thanks{   Department of Mathematics,
Zhejiang University, Zhejiang 310027,
 P. R. China, Email:   wwang@zju.edu.cn}

\begin{abstract} Many fundamental results of
pluripotential theory  on the quaternionic space $\mathbb{H}^n$ are extended to
  the
Heisenberg group. We introduce  notions of a plurisubharmonic function,
  the   quaternionic Monge-Amp\`{e}re operator,  differential operators $d_0$ and $d_1$  and   a closed positive current  on the
Heisenberg group. The quaternionic Monge-Amp\`{e}re operator is the coefficient  of  $ (d_0d_1u)^n$. We establish the Chern-Levine-Nirenberg type estimate, the existence of  quaternionic Monge-Amp\`{e}re
measure   for a continuous quaternionic plurisubharmonic function and the minimum principle for the   quaternionic Monge-Amp\`{e}re operator.
Unlike the tangential Cauchy-Riemann operator $ \overline{\partial}_b $   on the
Heisenberg group which behaves badly as
$
   \partial_b\overline{\partial}_b\neq -\overline{\partial}_b\partial_b
$, the  quaternionic counterpart $d_0$ and $d_1$ satisfy $
    d_0d_1=-d_1d_0
$.
   This  is the main reason  that we have a better theory for  the quaternionic Monge-Amp\`{e}re operator than   $ (\partial_b\overline{\partial}_b)^n$.
\end{abstract}
\maketitle
\section{Introduction}

  The theory of subharmonic functions (potential theory)  has already been generalized to Carnot groups in terms of SubLaplacians
 (cf. e.g. \cite{BL} \cite{DGN} and references therein), and
the generalized
horizontal Monge-Amp\`{e}re operator and $H$-convex functions on the Heisenberg group have been studied  for more than a decade  (cf. \cite{BCK} \cite{DGN} \cite{DGNT}
\cite{GT} \cite{GM}   \cite{JLMS} \cite{Ma} \cite{TZ} and references therein). For the $3$-dimensional  Heisenberg group, Guti\'errez and
Montanari \cite{GM} proved that the Monge-Amp\`{e}re measure defined by
 \begin{equation}\label{eq:MA-real}
 \int \det (Hess_X(u)) +12(T u)^2\quad {\rm for }  \quad u \in C^2(\Omega),
 \end{equation}
can be extended to $H$-convex functions, where $ Hess_X(u)$ is the symmetric $2\times2$-matrix
   \begin{equation}\label{eq:Hess-real}
      Hess_X(u):=\left(\frac {X_iX_ju+X_j X_iu}2\right)
   \end{equation}
and $X_1,X_2, T$ are standard   left invariant vector fields on the $3$-dimensional  Heisenberg group.
 $u$ is called {\it $H$-convex} on a  domain $\Omega$ if for any $\xi,\eta \in \Omega$ such that $\xi^{-1}\eta\in H_0$ and  $ \xi\delta_r(\xi^{-1}\eta)\in \Omega$ for $r\in [0,1]$,  the function of one real variable $r\rightarrow u(\xi\delta_r (\xi^{-1}\eta)) $ is convex in $  [0,1]$,  where $\delta_r$ is the dilation      and $H_0$ indicates   the subset of horizontal directions through the origin. It was generalized to  the $5$-dimensional   Heisenberg group   by Garofalo and
Tournier \cite{GT}, and to $k$-Hessian measures for $k$-convex functions on any dimensional Heisenberg groups   by Trudinger and Zhang \cite{TZ}.

In the theory of several complex variables, we have a powerful pluripotential theory about complex Monge-Amp\`{e}re operator $(\partial \overline{\partial} )^n$ and closed
positive currents, where $ \overline{\partial}  $ is the   Cauchy-Riemann operator (cf. e.g.  \cite{klimek}). It is quite interesting to develop its CR version  over the Heisenberg group.
A natural CR generalization of the  complex Monge-Amp\`{e}re operator is $(\partial_b\overline{\partial}_b)^n$, where $ \overline{\partial}_b $ is the tangential Cauchy-Riemann operator.
But unlike $\overline{\partial}\partial=-\partial\overline{\partial}$, it behaves badly as
\begin{equation}\label{eq:bad}
   \partial_b\overline{\partial}_b\neq -\overline{\partial}_b\partial_b,
\end{equation}because of the noncommutativity of horizontal vector fields (cf. Subsection 3.1). So
  it is very difficult to investigate the operator $(\partial_b\overline{\partial}_b)^n$, e.g. its regularity. On the other hand, pluripotential theory has been extended to the quaternionic  space $\mathbb{H}^n$
   (cf. \cite{alesker1}-\cite{alesker6} \cite{Bou} \cite{GMo} \cite{wanZ}-\cite{WZ}  \cite{wang-alg} and references
therein).
If we equip the $(4n+1)$-dimensional  Heisenberg group a natural
quaternionic strucuture on its horizontal subspace, we can introduce
  differential
operators $d_0$,  $d_1$ and  $\triangle u=d_0d_1u$ in terms of complex  horizontal vector fields, as the quaternionic counterpart  of $\partial_b$, $ \overline{\partial}_b $ and $\partial_b
\overline{\partial}_b $. They
  behave so well
  that we can   extend many fundamental results of   quaternionic
pluripotential theory  on $\mathbb{H}^n$ to
  the
Heisenberg group.

The $(4n+1)$-dimensional \emph{Heisenberg group}    $\mathscr{H} $ is the vector space $\mathbb{R}^{4n+1}$   with the
multiplication given by
\begin{align}\label{eq:hei}
(x,t) \cdot (y ,s )=\left( x + y,t+s +2  \langle x , y \rangle\right), \quad {\rm where}\quad
\langle x , y\rangle:=\sum_{l=1}^{2n }( x_{2l-1}y_{2l} -x_{2l} y_{2l-1})
\end{align}
for $x,y\in \mathbb{R}^{ 4n}$, ${t},{s }\in  \mathbb{R} $. Here $\langle\cdot,\cdot\rangle$ is the standard symplectic form.
We introduce a partial quaternionic  structure on the
Heisenberg group simply by identifying the underlying space of $  \mathbb{H}^n $ with $\mathbb{R}^{4n}$.
For a fixed $ q\in\mathbb{H}^n $, consider a $5$-dimensional real subspace
\begin{equation}\label{eq:Hq'}
   \mathscr{H}_q:=\{(q\lambda, t)\in \mathscr{H}; \lambda\in \mathbb{H}, t\in  \mathbb{R}\},
\end{equation} which is a subgroup.  $\mathscr{H}_q$ is nonabelian for all $ q\in\mathbb{H}^n $ except for a ${\rm codim}_{\mathbb{R}} 3$ quadratic cone $\mathfrak{D}$.
For a point $\eta\in \mathscr{H} $, the left translate of the subgroup $\mathscr{H}_q$ by $\eta$,
\begin{equation*}
\mathscr{H}_{\eta,q}:=  \eta\mathscr{H}_q,
\end{equation*} is a   $5$-dimensional real hyperplane through $\eta$,   called  a  {\it (right) quaternionic  Heisenberg   line}. A $[-\infty,\infty)$-valued  upper
semicontinuous function   on $\mathscr{H} $ is said to be \emph{plurisubharmonic}  if it is $L^1_{\rm loc} $ and is  subharmonic (in terms of SubLaplcian) on each   quaternionic  Heisenberg   line
$\mathscr{H}_{\eta,q}$ for any $\eta\in \mathscr{H} $, $ q\in\mathbb{H}^n\setminus  \mathfrak{D} $.

Let $X_1,\ldots X_{4n}$ be the standard  horizontal  left invariant vector fields    (\ref{eq:vector-Y}) on the
  Heisenberg group $\mathscr{H}  $.
Denote the {\it tangential Cauchy-Fueter operator} on $\mathscr{H}  $ by
\begin{equation*}
   \overline{Q_l}:=X_{4l+1}+\mathbf{i}X_{4l+2}+\mathbf{j}X_{4l+3 }+\mathbf{k}X_{4l+4},
\end{equation*}
and its conjugate
$
    {Q_l}=X_{4l+1}-\mathbf{i}X_{4l+2}-\mathbf{j}X_{4l+3}-\mathbf{k}X_{4l+4},
$
$  l =0,\cdots,n-1$. Compared to the Cauchy-Fueter operator on $ \mathbb{{H}}^n$, the   tangential Cauchy-Fueter operator $\overline{Q_l}$ is
much more complicated because
not  only $\mathbf{i},\mathbf{j},\mathbf{k}$ are noncommutative, but also $X_a$'s are. In particular,
\begin{equation}\label{eq:bar-QQ}
  \overline   {Q_l}{Q_l} =X_{4l+1}^2+ X_{4l+2}^2+X_{4l+3}^2+ X_{4l+4}^2 -8 \mathbf{i} \partial_{t },
\end{equation}is not real.
 But
 for a real $    C^2 $ function $u$, the $n\times n$ quaternionic matrix
   \begin{equation*}
      \left(\overline{Q_l} {Q_m} u + 8\delta_{lm} \mathbf{i} \partial_{t }u \right)
   \end{equation*}is   hyperhermitian, called  the {\it horizontal quaternionic Hessian}.
 It  is nonnegative if $u$ is plurisubharmonic.
We define the {\it quaternionic Monge-Amp\`{e}re operator} on the
Heisenberg group  as
    \begin{equation*}
   \det   \left(\overline{Q_l} {Q_m} u + 8 \delta_{lm}\mathbf{i} \partial_{t }u \right),
   \end{equation*}
   where $\det$ is the Moore determinant.

   Alesker obtained   Chern-Levine-Nirenberg estimate for the  quaternionic Monge-Amp\`{e}re operator  on $\mathbb{H}^n$ \cite{alesker2}. We extend this
   estimate to   the
Heisenberg group, and obtain the following existence theorem of the quaternionic Monge-Amp\`{e}re measure   for a continuous
plurisubharmonic function.
   \begin{thm} \label{thm:MA-measure}
   Let $\{u_j\}$ be a sequence of $C^2$ plurisubharmonic
functions converging to $u$ uniformly on compact subsets of  a
domain $\Omega$ in $\mathscr{H} $. Then $u $ be a continuous  plurisubharmonic function on $\Omega$. Moreover,  $\det\left(
{\overline{Q_l}}Q_mu_j+  8 \delta_{lm}\mathbf{i} \partial_{t }u_j\right)$ is a family of
uniformly bounded measures on each compact subset  $K$ of
$\Omega$ and weakly
converges to a non-negative measure on $\Omega$. This measure depends only on $u$ and not on
the choice of an approximating sequence  $\{u_j\}$.
\end{thm}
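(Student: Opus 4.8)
The plan is to follow the Bedford--Taylor strategy, adapted to the quaternionic structure on $\mathscr{H}$, with the Chern--Levine--Nirenberg estimate supplying the uniform bounds. First I would dispose of the elementary assertions. Since each $u_j$ is continuous and $u_j\to u$ uniformly on compact subsets, $u$ is continuous, hence locally bounded and in particular $L^1_{\rm loc}$. On any quaternionic Heisenberg line $\mathscr{H}_{\eta,q}$ with $q\in\mathbb{H}^n\setminus\mathfrak{D}$ the restriction $u_j|_{\mathscr{H}_{\eta,q}}$ is subharmonic in the SubLaplacian sense and converges uniformly to $u|_{\mathscr{H}_{\eta,q}}$; a uniform limit of subharmonic functions is subharmonic, so $u$ is subharmonic on every such line and therefore plurisubharmonic on $\Omega$.

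For the uniform boundedness I would invoke the Chern--Levine--Nirenberg estimate established earlier, the Heisenberg-group extension of Alesker's estimate. For $K\subset\subset K'\subset\subset\Omega$ it yields a constant $C_{K,K'}$ with $\int_K \det(\overline{Q_l}Q_m u_j + 8\delta_{lm}\mathbf{i}\partial_t u_j)\le C_{K,K'}\|u_j\|^n_{L^\infty(K')}$. Uniform convergence makes $\sup_j\|u_j\|_{L^\infty(K')}$ finite, so the measures $\mu_j:=\det(\overline{Q_l}Q_m u_j + 8\delta_{lm}\mathbf{i}\partial_t u_j)$ are uniformly bounded on $K$; by weak-star compactness every subsequence has a weakly convergent sub-subsequence whose limit is non-negative.

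The heart of the argument is to identify this limit and to show it is independent of the approximating sequence. I would pass to the language of currents, writing $\mu_j$ as the top coefficient of $(d_0 d_1 u_j)^n$, and prove by induction on $k$ the statement: for continuous plurisubharmonic $v^1,\dots,v^k$ the current $d_0 d_1 v^1\wedge\cdots\wedge d_0 d_1 v^k$ is well defined independently of $C^2$ approximations, is a closed positive current, and depends weakly continuously on the $v^i$ under uniform convergence on compact sets. The crucial tool is the identity $d_0 d_1=-d_1 d_0$, which legitimizes integration by parts against a smooth compactly supported test form $\phi$: for $C^2$ data one has $\int\phi\,(d_0 d_1 v^k)\wedge T=\int v^k\,(d_0 d_1\phi)\wedge T$ whenever $T$ is a closed current, the boundary terms vanishing since $\phi$ has compact support. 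The base case $k=1$ is immediate, as $L^1_{\rm loc}$-convergence $v^1_j\to v^1$ forces distributional convergence of $d_0 d_1 v^1_j$. For the inductive step I would peel off one factor and integrate by parts, reducing convergence of $\int\phi\,(d_0 d_1 v^k_j)\wedge(d_0 d_1 v^1_j\wedge\cdots\wedge d_0 d_1 v^{k-1}_j)$ to that of $\int v^k_j\,(d_0 d_1\phi)\wedge(d_0 d_1 v^1_j\wedge\cdots\wedge d_0 d_1 v^{k-1}_j)$; by the inductive hypothesis the lower-order current converges weakly and, through the Chern--Levine--Nirenberg estimate, has uniformly bounded mass, while $v^k_j\to v^k$ uniformly, so the standard fact that $f_j T_j\to f T$ weakly when $f_j\to f$ uniformly and $T_j\to T$ weakly with uniformly bounded mass gives the claim. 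Well-definedness at each level follows from the usual interleaving trick: two $C^2$ approximations of the same continuous data merge into one uniformly convergent sequence, forcing their limit currents to agree. Specializing to $v^i=u$ and $k=n$ yields the theorem.

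The main obstacle I anticipate is the inductive step on $\mathscr{H}$: making the integration-by-parts formula rigorous in the presence of the group's non-commutative horizontal vector fields and the extra $\partial_t$-terms in the Hessian. On the flat space $\mathbb{H}^n$ this is Alesker's argument, but on the Heisenberg group the non-commutativity is carried by $d_0,d_1$, and it is precisely the identity $d_0 d_1=-d_1 d_0$ --- rather than the failure $\partial_b\overline{\partial}_b\neq-\overline{\partial}_b\partial_b$ --- that forces the commutator and boundary terms to cancel. Verifying that the closed positive currents $(d_0 d_1 u_j)^k$ behave correctly under this integration by parts, and that the Chern--Levine--Nirenberg bounds propagate through the induction, is the delicate point; once these are secured, the convergence lemma and the interleaving argument are routine.
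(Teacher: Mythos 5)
Your proposal is correct in substance, but it is organized quite differently from the paper's proof. You run the full Bedford--Taylor/Alesker induction: define mixed currents $\triangle v^1\wedge\cdots\wedge\triangle v^k$ with merely continuous plurisubharmonic entries, prove their weak continuity under uniform convergence using the lemma that $f_jT_j\to fT$ when $f_j\to f$ uniformly and $T_j\to T$ weakly with locally uniformly bounded mass, and get uniqueness by interleaving approximating sequences. The paper never defines currents with non-smooth entries at all: it reduces the theorem to showing that the numbers $\int_\Omega\chi(\triangle u_j)^n$, $\chi\in C_0^\infty(\Omega)$, form a Cauchy sequence, and obtains this directly from the telescoping identity
\begin{equation*}
  (\triangle u_j)^n-(\triangle u_k)^n=\sum_{p=1}^n(\triangle u_j)^{p-1}\wedge\triangle(u_j-u_k)\wedge(\triangle u_k)^{n-p},
\end{equation*}
a single integration by parts (Proposition \ref{prop:d-delta} combined with the Stokes-type formula of Lemma \ref{p3.9}) that moves $\triangle$ off $u_j-u_k$ and onto $\chi$, and then Lemma \ref{l4.1} together with the Chern--Levine--Nirenberg estimate of Proposition \ref{prop:estimate-C0}; this yields the quantitative bound $\left|\int\chi(\triangle u_j)^n-\int\chi(\triangle u_k)^n\right|\le C\|u_j-u_k\|_\infty$ with $C$ depending only on $\sup_j\|u_j\|_\infty$ and $\chi$. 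Because both functions being compared are $C^2$, every wedge product is an honest form, so no convergence lemma for currents of order zero and no induction on the degree are needed, and the same displayed estimate, applied to two interleaved approximating sequences, gives independence of the limit immediately. What your route buys is a stronger conclusion --- well-definedness and weak continuity of all mixed Monge--Amp\`{e}re currents of continuous PSH functions, which is what one wants for further pluripotential theory; what the paper's route buys is brevity and an explicit modulus of convergence. Both arguments rest on exactly the two pillars you identified: integration by parts legitimized by $d_0d_1=-d_1d_0$, and the CLN mass bounds.
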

It is worth mentioning  that compared to  the real case (\ref{eq:Hess-real}), our quaternionic Monge-Amp\`{e}re operator need not to be
symmetrized for off-diagonal entries and the Monge-Amp\`{e}re measure does not have an extra term $(Tu)^2$ as in (\ref{eq:MA-real}).

As in \cite{shi-wang} \cite{wan-wang}  \cite{Wang}  \cite{Wang2}, motivated by the embedding of  quaternionic algebra $\mathbb{H}$ into  $\mathbb{C}^{2\times2}:$
\begin{align*}
x_{1}+x_{2}\textbf{i}_1+x_{3}\textbf{i}_2+x_{4}\textbf{i}_3\mapsto
\left(\begin{array}{rr} x_{1}+\mathbf{i}x_{2}& -x_{3}-\mathbf{i}x_{4}\\ x_{3}-\mathbf{i}x_{4}& x_{1}-\mathbf{i}x_{2}\end{array}\right),
\end{align*}
we consider complex left invariant vector fields
\begin{equation}\label{eq:nabla-jj'-new}\left(
                             \begin{array}{cc}
                               Z_{00' } & Z_{01' } \\
                                                              \vdots&\vdots\\
                               Z_{ l 0' } & Z_{ l 1' } \\
                               \vdots&\vdots\\ Z_{ n 0' } & Z_{  n 1' } \\ \vdots&\vdots\\
                               Z_{(  n+l)0' } & Z_{(  n+l)1' } \\ \vdots&\vdots\\
                                                            \end{array}
                           \right):=\left(
                                      \begin{array}{cc}
                                      X_1+\textbf{i}X_2 & -X_3-\textbf{i}X_4 \\
                                                                                \vdots&\vdots\\
                                         X_{4l+1} +\textbf{i}X_{4l+2}  & -{X_{4l+3}} -\textbf{i}{X_{4l+4}} \\ \vdots&\vdots    \\
                                         {X_{3}}-\textbf{i}{X_{4}} & {X_{1}}-\textbf{i}{X_{2}} \\   \vdots&\vdots\\
                                        {X_{4l+3}}-\textbf{i}{X_{4l+4}} & {X_{4l+1}}-\textbf{i}{X_{4l+2}} \\
                                        \vdots&\vdots
                                      \end{array}
                                    \right),
\end{equation} where $X_a$'s are the standard  horizontal  left invariant vector fields    (\ref{eq:vector-Y}) on  $\mathscr{H}  $.
Let $\wedge^{p}\mathbb{C}^{2n}$ be the complex exterior algebra generated by $\mathbb{C}^{2n}$, $  p=0,\ldots,2n$. Fix a basis
$\{\omega^0,\omega^1,\ldots$, $\omega^{2n-1}\}$ of $\mathbb{C}^{2n}$. For a domain $\Omega$   in $\mathscr H $, we
define differential operators $d_0,d_1:C_0^\infty(\Omega,\wedge^{p}\mathbb{C}^{2n})\rightarrow C_0^\infty(\Omega,\wedge^{p+1}\mathbb{C}^{2n})$ by
\begin{equation}d_0F:=\sum_{I}\sum_{A=0}^{2n-1}Z_{A0' }f_{I}~\omega^A\wedge\omega^I,\qquad d_1F:=\sum_{I}\sum_{A=0}^{2n-1}Z_{A1' }f_{I}~\omega^A\wedge\omega^I,
\end{equation}for $F=\sum_{I}f_{I}\omega^I\in C_0^\infty(\Omega,\wedge^{p}\mathbb{C}^{2n})$,  where
$\omega^I:=\omega^{i_1}\wedge\ldots\wedge\omega^{i_{p}}$ for the multi-index
$I=(i_1,\ldots,i_{p})$.  We call a form $F$
  \emph{closed} if $d_0F=d_1F=0.
$

 In contrast to the bad behaviour (\ref{eq:bad})  of
  $ \partial_b\overline{\partial}_b$, we have the following nice identities for $d_0$ and $d_1$:
  \begin{equation}\label{eq:remarkable}
    d_0d_1=-d_1d_0,
  \end{equation}
   which  is the main reason  that we could have a good theory for  the quaternionic Monge-Amp\`{e}re operator  on the
Heisenberg group.

\begin{prop}\label{prop:d2}
(1)  $d_0^2=d_1^2=0$.

(2) The identity (\ref{eq:remarkable}) holds.

(3) For $F\in C_0^\infty(\Omega,\wedge^{p}\mathbb{C}^{2n})$, $G\in C_0^\infty(\Omega,\wedge^{q}\mathbb{C}^{2n})$, we
have\begin{equation*}d_\alpha(F\wedge G)=d_\alpha F\wedge G+(-1)^{p}F\wedge d_\alpha G,\qquad \alpha=0,1.\end{equation*}
\end{prop}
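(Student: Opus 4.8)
The plan is to reduce every assertion to the basic commutation structure of the complex horizontal vector fields $Z_{A0'}$ and $Z_{A1'}$, since the operators $d_0$ and $d_1$ are, at bottom, directional derivatives followed by exterior multiplication by $\omega^A$. First I would establish the key algebraic input: the commutator relations among the $Z_{A0'}$ and $Z_{A1'}$. Because the $X_a$ are left invariant vector fields on $\mathscr H$ with $[X_{2l-1},X_{2l}]=-4\partial_t$ (and the other brackets vanishing or being controlled by the Heisenberg structure \eqref{eq:hei}), the real and imaginary combinations in \eqref{eq:nabla-jj'-new} should satisfy $[Z_{A0'},Z_{B0'}]=0$, $[Z_{A1'},Z_{B1'}]=0$, and $[Z_{A0'},Z_{B1'}]=[Z_{B0'},Z_{A1'}]$ for all $A,B$; this last symmetric identity, reflecting the embedding $\mathbb H\hookrightarrow\mathbb C^{2\times 2}$, is the crucial point. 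I expect that verifying these relations by direct computation with the explicit vector fields \eqref{eq:vector-Y} is the main obstacle, though it is a finite check once the bracket of the $X_a$'s is recorded.

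Granting those commutators, part (1) follows quickly. Writing $d_0^2F=\sum_{I}\sum_{A,B}Z_{A0'}Z_{B0'}f_I~\omega^A\wedge\omega^B\wedge\omega^I$, I would split the double sum into its symmetric and antisymmetric parts in $(A,B)$. Since $\omega^A\wedge\omega^B$ is antisymmetric in $A,B$ while the coefficient $Z_{A0'}Z_{B0'}f_I$ has a symmetric part equal to $\tfrac12(Z_{A0'}Z_{B0'}+Z_{B0'}Z_{A0'})f_I$, only the antisymmetric (commutator) part survives, and $[Z_{A0'},Z_{B0'}]=0$ forces $d_0^2=0$. The identity $d_1^2=0$ is identical with $1'$ in place of $0'$.

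For part (2) I would compute $(d_0d_1+d_1d_0)F=\sum_I\sum_{A,B}\bigl(Z_{A0'}Z_{B1'}+Z_{A1'}Z_{B0'}\bigr)f_I~\omega^A\wedge\omega^B\wedge\omega^I$, where the second summand arises from relabeling the summation indices $A\leftrightarrow B$ in $d_1d_0$ and using $\omega^B\wedge\omega^A=-\omega^A\wedge\omega^B$, producing a relative sign that I would track carefully. Antisymmetrizing in $(A,B)$ as before, the contribution reduces to the commutator $[Z_{A0'},Z_{B1'}]-[Z_{B0'},Z_{A1'}]$ wedged with $\omega^A\wedge\omega^B$, which vanishes precisely by the symmetric cross-relation established above; hence $d_0d_1=-d_1d_0$, which is \eqref{eq:remarkable}. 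Finally, part (3) is the Leibniz rule and requires no commutator information at all: applying $d_\alpha$ to $F\wedge G=\sum_{I,J}f_I g_J~\omega^I\wedge\omega^J$ and using the first-order Leibniz property $Z_{A\alpha'}(f_Ig_J)=(Z_{A\alpha'}f_I)g_J+f_I(Z_{A\alpha'}g_J)$, I would move the factor $\omega^A$ past the $p$-form $\omega^I$ in the second term, picking up the sign $(-1)^{p}$ from the $p$ transpositions; collecting terms gives the stated graded Leibniz rule for each $\alpha=0,1$.
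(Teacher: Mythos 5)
Your proposal is correct and takes essentially the same route as the paper: the commutator relations you postulate are exactly the content of the paper's Proposition \ref{prop:brackets-Z} (each column $\{Z_{A0'}\}$, $\{Z_{A1'}\}$ spans an abelian algebra, and the cross-commutators satisfy $[Z_{l0'},Z_{(n+l)1'}]=[Z_{(n+l)0'},Z_{l1'}]=-8\mathbf{i}\partial_t$ with all others vanishing, which is precisely your symmetric relation $[Z_{A0'},Z_{B1'}]=[Z_{B0'},Z_{A1'}]$), and the paper's proofs of (1)--(3) then proceed by the same antisymmetrization/index-swap and Leibniz arguments you describe. The only slip is your sign for the basic bracket, which is $[X_{2l-1},X_{2l}]=+4\partial_t$ in (\ref{eq:bracket-T}); this is immaterial, since the symmetric cross-relation holds with either sign.
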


We introduce a second-order differential operator $\triangle:C_0^\infty(\Omega,\wedge^{p}\mathbb{C}^{2n})\rightarrow
C_0^\infty(\Omega,\wedge^{p+2}\mathbb{C}^{2n})$ by
\begin{equation}\label{eq:triangle0}
   \triangle
F:=d_0d_1F,
\end{equation}
which behaves nicely as $ \partial\overline{\partial}$ as in the following proposition.
\begin{prop}\label{prop:d-delta}For $u_1,\ldots,
u_n\in C^2$, \begin{equation*}\begin{aligned}\triangle u_1\wedge \triangle
u_2\wedge\ldots\wedge\triangle u_n&=d_0(d_1u_1\wedge \triangle
u_2\wedge\ldots\wedge\triangle u_n)=-d_1(d_0u_1\wedge \triangle
u_2\wedge\ldots\wedge\triangle u_n)\\&=d_0d_1(u_1\triangle
u_2\wedge\ldots\wedge\triangle u_n)=\triangle (u_1
\triangle u_2\wedge\ldots\wedge\triangle u_n).
\end{aligned}\end{equation*}
\end{prop}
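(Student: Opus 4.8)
The plan is to reduce all four equalities to the three structural facts furnished by Proposition~\ref{prop:d2}: the square-zero relations $d_0^2=d_1^2=0$, the anticommutation $d_0d_1=-d_1d_0$, and the graded Leibniz rule. The decisive preliminary step I would establish first is that each factor $\triangle u_i=d_0d_1u_i$ is \emph{closed}. Indeed $d_0\triangle u_i=d_0^2d_1u_i=0$, while $d_1\triangle u_i=d_1d_0d_1u_i=-d_0d_1^2u_i=0$, using $d_1d_0=-d_0d_1$ together with $d_1^2=0$. Writing $\Theta:=\triangle u_2\wedge\cdots\wedge\triangle u_n$, I would then note that a wedge of closed forms is closed, since in $d_\alpha(F\wedge G)=d_\alpha F\wedge G+(-1)^pF\wedge d_\alpha G$ both terms vanish separately; an easy induction on the number of factors thus gives $d_0\Theta=d_1\Theta=0$. (For $n=1$ the tail $\Theta$ is the unit $0$-form and every identity reduces to the definition of $\triangle$.)

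Once $\Theta$ is known to be closed, each equality follows from a single Leibniz expansion in which exactly one of the two terms drops out. For the first identity, since $d_1u_1$ is a $1$-form, I would compute $d_0(d_1u_1\wedge\Theta)=d_0d_1u_1\wedge\Theta-d_1u_1\wedge d_0\Theta=\triangle u_1\wedge\Theta$, the second summand vanishing because $d_0\Theta=0$. For the second identity the same expansion with the roles of $d_0$ and $d_1$ interchanged yields $d_1(d_0u_1\wedge\Theta)=d_1d_0u_1\wedge\Theta-d_0u_1\wedge d_1\Theta=-\triangle u_1\wedge\Theta$, where now $d_1d_0u_1=-d_0d_1u_1=-\triangle u_1$ by anticommutation and $d_1\Theta=0$; negating gives the claimed value.

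The remaining two equalities are even more direct. Treating $u_1$ as a $0$-form, Leibniz gives $d_1(u_1\Theta)=d_1u_1\wedge\Theta+u_1\,d_1\Theta=d_1u_1\wedge\Theta$ since $d_1\Theta=0$; applying $d_0$ then identifies $d_0d_1(u_1\Theta)$ with $d_0(d_1u_1\wedge\Theta)$, which is the third equality, and the fourth is merely the definition $\triangle=d_0d_1$ applied to $F=u_1\Theta$. Chaining these computations gives the full string of identities.

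I do not expect a genuine obstacle here: the entire content is carried by Proposition~\ref{prop:d2}, and the argument is a bookkeeping of the Leibniz rule. The one point demanding care is tracking the form-degrees and the attendant sign $(-1)^p$: in particular I must confirm that $\Theta$, a wedge of $(n-1)$ two-forms of even total degree, produces no sign that would spoil the cancellations, and that it is the closedness of $\Theta$ as a whole — rather than of each individual factor — that makes every extraneous term vanish.
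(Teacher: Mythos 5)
Your proposal is correct and follows essentially the same route as the paper: the paper first records (as its Corollary on closedness of $\triangle u_1\wedge\ldots\wedge\triangle u_k$, proved exactly by your ``each $\triangle u_i$ is closed, hence the wedge is closed'' argument) that $\triangle u_2\wedge\ldots\wedge\triangle u_n$ is closed, and then runs the same Leibniz expansions $d_\alpha(u_1\Theta)=d_\alpha u_1\wedge\Theta$ and $d_0(d_1u_1\wedge\Theta)=\triangle u_1\wedge\Theta$ to chain the identities together. Your sign bookkeeping, including the treatment of $-d_1(d_0u_1\wedge\Theta)$ via $d_1d_0=-d_0d_1$, matches the paper's conventions, so there is nothing to correct.
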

 The quaternionic Monge-Amp\`{e}re operator can be expressed as   the exterior product of $\triangle u$.
\begin{thm}\label{thm:delta-QMA} For a real $C^2$ function $u $  on
$\mathscr{H} $, we have
\begin{equation}\label{eq:delta-QMA}\triangle
u \wedge\ldots\wedge\triangle
u =n!~ \det\left( {\overline{Q_l}}Q_mu+ 8 \delta_{lm}\mathbf{i} \partial_{t }u\right)  \Omega_{2n},
\end{equation}
where\begin{equation}\label{eq:omega}\Omega_{2n}:=\omega^0\wedge\omega^{  n }\cdots\wedge
\omega^{ n-1} \wedge
\omega^{2n-1}\in \wedge^{2n}_{\mathbb{R}+}\mathbb{C}^{2n}.\end{equation}
\end{thm}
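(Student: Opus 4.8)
The plan is to reduce (\ref{eq:delta-QMA}) to a pointwise linear-algebra identity and then to the standard relation between the top wedge power of a two-form and the Moore determinant.

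First I would write $\triangle u$ in the basis $\{\omega^A\}$. Since $d_1u=\sum_{B}(Z_{B1'}u)\,\omega^B$ and $d_0$ applies $Z_{A0'}$ to each coefficient and wedges with $\omega^A$, one obtains
\[
\triangle u=d_0d_1u=\sum_{A,B=0}^{2n-1}(Z_{A0'}Z_{B1'}u)\,\omega^A\wedge\omega^B .
\]
Only the antisymmetric part of the coefficient matrix survives the wedge, so with $N_{AB}:=Z_{A0'}Z_{B1'}u-Z_{B0'}Z_{A1'}u$ we have $\triangle u=\sum_{A<B}N_{AB}\,\omega^A\wedge\omega^B$. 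The elementary formula for the top power of a two-form then gives $\triangle u\wedge\cdots\wedge\triangle u=n!\,\mathrm{Pf}(N)\,\omega^0\wedge\omega^1\wedge\cdots\wedge\omega^{2n-1}$, where $\mathrm{Pf}$ denotes the Pfaffian. Hence, after reordering the basis into $\Omega_{2n}$ as in (\ref{eq:omega}) and recording the sign of that reordering, the theorem is equivalent to $\mathrm{Pf}(N)=\pm\det(\overline{Q_l}Q_mu+8\delta_{lm}\mathbf{i}\partial_{t}u)$.

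Next I would identify $N$ with the complex matrix attached to the quaternionic Hessian. By the embedding $\mathbb{H}\hookrightarrow\mathbb{C}^{2\times2}$ preceding (\ref{eq:nabla-jj'-new}), the operators $Z_{A0'},Z_{A1'}$ are exactly the complex components of $\overline{Q_l}$ and $Q_m$; computing the products $Z_{A0'}Z_{B1'}u$ block by block and antisymmetrizing should reassemble the $2n\times2n$ complex antisymmetric matrix associated with the $n\times n$ matrix $H_{lm}=\overline{Q_l}Q_mu+8\delta_{lm}\mathbf{i}\partial_{t}u$. The essential point is that the second-order operators do not commute: on $\mathscr{H}$ the bracket $[X_a,X_b]$ is a constant multiple of $\partial_{t}$ for symplectically paired indices and vanishes otherwise, and these commutators are precisely what turn the naive diagonal $\overline{Q_l}Q_lu$ into $\overline{Q_l}Q_lu+8\mathbf{i}\partial_{t}u$ via (\ref{eq:bar-QQ}), which by reality of $u$ makes $H$ hyperhermitian as already noted in the text. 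The off-diagonal blocks $l\neq m$ should receive no such correction, since the relevant vector fields lie in distinct symplectic pairs and hence commute.

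Finally I would invoke the standard fact of quaternionic linear algebra that the Moore determinant of a hyperhermitian matrix $H$ equals the Pfaffian of its associated complex antisymmetric matrix. Combined with the wedge-power computation and the normalization $\Omega_{2n}$, this yields (\ref{eq:delta-QMA}) with the stated factor $n!$. The hardest part will be this middle step: the block-by-block bookkeeping of the noncommuting operators $Z_{A0'}Z_{B1'}$, showing that every commutator term collects exactly into the diagonal correction $8\delta_{lm}\mathbf{i}\partial_{t}u$ and nowhere else, so that the antisymmetrized matrix $N$ is genuinely the complex antisymmetric matrix of a hyperhermitian $H$ rather than merely agreeing with it modulo uncontrolled first-order noncommutative errors.
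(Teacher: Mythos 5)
Your proposal is correct and is essentially the paper's own proof: the antisymmetrized coefficient matrix $N_{AB}=2\triangle_{AB}u$ you form is exactly the matrix the paper identifies with $\tau\left(\overline{Q_l}Q_mu+8\delta_{lm}\mathbf{i}\partial_tu\right)\mathbb{J}$ in (\ref{eq:Q-delta'}), your commutator bookkeeping (off-diagonal blocks uncorrected because the fields lie in distinct symplectic pairs, diagonal corrected by $8\mathbf{i}\partial_tu$) is carried out there via Propositions \ref{prop:brackets-Z} and \ref{prop:hyperhermitian}, and the Pfaffian--Moore-determinant fact you invoke is precisely Theorem \ref{thm:det} (Theorem 1.2 of \cite{wang-alg}) specialized to equal matrices, up to the normalization $2^n n!$ and the basis-ordering sign $(-1)^{n(n-1)/2}$ relating $\omega^0\wedge\cdots\wedge\omega^{2n-1}$ to $\Omega_{2n}$, which you flagged. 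The only difference is language (Pfaffian versus the mixed-discriminant function $\triangle_n$), not the route.
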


\begin{thm} {\rm (The minimum principle)}\label{thm:minimun}
 Let $\Omega$ be a bounded   domain with smooth boundary in $\mathscr{H} $, and let $u$ and $v $ be continuous     plurisubharmonic functions on $\Omega$. Assume that
$
  (\triangle u)^n\leq    (\triangle v)^n.
 $ Then
 \begin{equation*}
    \min_{\overline{\Omega}} \{u-v\}= \min_{ \partial {\Omega}} \{u-v\}. \end{equation*}
 \end{thm}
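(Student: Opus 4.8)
The plan is to deduce the minimum principle from a \emph{comparison principle}, in exact analogy with the Bedford--Taylor theory of the complex Monge--Amp\`ere operator, where the algebraic identity $d_0d_1=-d_1d_0$ of Proposition \ref{prop:d2} takes over the role of $\partial\overline\partial=-\overline\partial\partial$. Concretely, I would first establish the comparison principle: if $u,v$ are continuous plurisubharmonic functions on $\Omega$ and $\Omega':=\{u<v\}\Subset\Omega$, then $\int_{\Omega'}(\triangle v)^n\le\int_{\Omega'}(\triangle u)^n$, the integrands being the nonnegative measures furnished by Theorem \ref{thm:MA-measure} and Theorem \ref{thm:delta-QMA}. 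Granting this, the theorem follows by perturbing $v$ with a strictly plurisubharmonic function and arguing by contradiction.

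For the comparison principle I would set $w:=\max(u,v)$, which is again continuous plurisubharmonic, with $w=v$ on $\Omega'$ and $w=u$ on $\Omega\setminus\overline{\Omega'}$, so that $w-u$ is supported in the compact set $\overline{\Omega'}\Subset\Omega$. Since $\triangle$ is linear and the forms $\triangle w,\triangle u$ have even degree, the telescoping factorization
\begin{equation*}
(\triangle w)^n-(\triangle u)^n=\triangle(w-u)\wedge\sum_{j=0}^{n-1}(\triangle w)^j\wedge(\triangle u)^{n-1-j}
\end{equation*}
holds, and by Proposition \ref{prop:d-delta} together with the closedness of each $(\triangle w)^j\wedge(\triangle u)^{n-1-j}$ the right-hand side equals $d_0\bigl(d_1(w-u)\wedge T\bigr)$ for a closed positive current $T$. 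Integration by parts for $d_0,d_1$ (Stokes, with $w-u$ compactly supported) then gives $\int_\Omega(\triangle w)^n=\int_\Omega(\triangle u)^n$. Invoking the locality $\mathbf 1_{\Omega'}(\triangle w)^n=\mathbf 1_{\Omega'}(\triangle v)^n$, splitting $\Omega$ into $\Omega'$ and its complement, and discarding the nonnegative mass of $(\triangle w)^n$ carried by $\partial\Omega'$ yields $\int_{\Omega'}(\triangle v)^n\le\int_{\Omega'}(\triangle u)^n$; the boundary $\partial\Omega'$ is handled in the standard way by working with $\{u<v-\delta\}$ and letting $\delta\downarrow0$ along a sequence not charging the level sets.

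To conclude, suppose for contradiction that $b:=\min_{\overline\Omega}(u-v)<\min_{\partial\Omega}(u-v)=:a$. Fix a $C^2$ strictly plurisubharmonic function $q$, for instance $q=|x|^2$, whose horizontal quaternionic Hessian $\bigl(\overline{Q_l}Q_m q+8\delta_{lm}\mathbf i\,\partial_t q\bigr)$ is positive definite because $q$ is independent of $t$; thus $(\triangle q)^n\ge\delta\,dV$ for the Lebesgue measure and some $\delta>0$. Put $v_\epsilon:=v+\epsilon q$. For $\epsilon$ small the strict gap $a-b>0$ persists, so $\min_{\overline\Omega}(u-v_\epsilon)<\min_{\partial\Omega}(u-v_\epsilon)$; choosing $c$ strictly between these two values, the set $D:=\{u<v_\epsilon+c\}$ is nonempty, $D\Subset\Omega$, and $u=v_\epsilon+c$ on $\partial D$. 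The comparison principle applied to $u$ and $v_\epsilon+c$ gives $\int_D(\triangle v_\epsilon)^n\le\int_D(\triangle u)^n$. On the other hand, expanding $(\triangle v_\epsilon)^n$ and dropping the nonnegative mixed terms, $(\triangle v_\epsilon)^n\ge(\triangle v)^n+\epsilon^n(\triangle q)^n\ge(\triangle u)^n+\delta\epsilon^n\,dV$, whence $\int_D(\triangle v_\epsilon)^n>\int_D(\triangle u)^n$ since $|D|>0$ --- a contradiction.

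The main obstacle is the comparison principle, and within it the two analytic facts that must be imported from pluripotential theory for merely \emph{continuous} (not $C^2$) functions: the validity of integration by parts for $d_0,d_1$ on domains of $\mathscr H$ (so that total Monge--Amp\`ere mass is conserved under compactly supported perturbations), and the locality $\mathbf 1_{\{u<v\}}(\triangle\max(u,v))^n=\mathbf 1_{\{u<v\}}(\triangle v)^n$. Both rest on the Chern--Levine--Nirenberg estimate and the weak convergence of mixed Monge--Amp\`ere measures underlying Theorem \ref{thm:MA-measure}. By contrast, the purely algebraic inputs $d_0^2=d_1^2=0$, $d_0d_1=-d_1d_0$ and the Leibniz rule of Proposition \ref{prop:d2} are precisely what permit the complex-variable arguments to be transcribed verbatim; the failure of $\partial_b\overline\partial_b=-\overline\partial_b\partial_b$ noted in \eqref{eq:bad} is exactly what would block the analogous statement for $(\partial_b\overline\partial_b)^n$.
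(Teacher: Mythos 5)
Your overall architecture --- a comparison principle plus perturbation by a strictly plurisubharmonic function such as $|x|^2$ (whose Monge--Amp\`ere density is indeed $4^n n!$ here) --- has the right shape, and your final contradiction argument is essentially the one the paper runs. The genuine gap is in the first and main step: you propose to prove the comparison principle $\int_{\{u<v\}}(\triangle v)^n\le\int_{\{u<v\}}(\triangle u)^n$ for merely \emph{continuous} plurisubharmonic $u,v$ by transcribing the Bedford--Taylor argument (take $w=\max(u,v)$, telescope, integrate by parts, invoke the locality $\mathbf 1_{\{u<v\}}(\triangle\max(u,v))^n=\mathbf 1_{\{u<v\}}(\triangle v)^n$), deferring the two analytic ingredients --- mass conservation under compactly supported continuous perturbations, and locality --- as facts to be ``imported from pluripotential theory.'' In this setting they cannot be imported, and they do not follow from the Chern--Levine--Nirenberg estimate and Theorem \ref{thm:MA-measure} alone, as you assert. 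In the complex and flat quaternionic cases both facts rest on convergence of Monge--Amp\`ere measures along \emph{decreasing} sequences of plurisubharmonic functions (or on quasicontinuity, which is derived from it), and that machinery is exactly what is missing here: the paper points out, just before its proof, that one does not know whether the regularization $\chi_\varepsilon*u$ decreases as $\varepsilon\downarrow 0$ on the Heisenberg group, because of non-commutativity of the convolution. The only convergence theorem available is Theorem \ref{thm:MA-measure}, which requires \emph{uniform} approximation by $C^2$ plurisubharmonic functions; note also that $w=\max(u,v)$ is not $C^2$ even when $u,v$ are, so even defining $(\triangle w)^n$ and justifying Stokes for the compactly supported but non-smooth form $d_1(w-u)\wedge T$ already requires arguments you have not supplied.

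The paper's proof is engineered to avoid precisely these missing tools. It proves a comparison statement only for $C^2$ plurisubharmonic functions on a smoothly bounded domain with $u=v$ on the boundary (Proposition \ref{prop:compare}), by a Stokes-type formula in which the boundary integrand is shown to be nonnegative via the strong positivity of $d_0\rho\wedge d_1(v-u)$ (Proposition \ref{prop:wedge-positive}); no locality and no mass conservation for continuous functions is ever used. Then, in the proof of the minimum principle, both $u$ and $v$ are regularized (uniform convergence holds by Proposition \ref{prop:PSH-property}(6) since they are continuous), the sublevel sets $\{u_\tau+\eta<v_\varepsilon+\delta_0|x-x_0|^2\}$ are made smoothly bounded for almost every $\eta$ by Sard's theorem, Propositions \ref{prop:compare} and \ref{prop:ineq} are applied on those smooth domains (the quadratic term playing the role of your $\epsilon q$), and one passes to the limit using only Theorem \ref{thm:MA-measure}, with a continuity-point argument in $\eta$ to control the moving domains. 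To salvage your plan you would either have to establish the Bedford--Taylor convergence/locality theory on $\mathscr H$ from scratch --- a substantial undertaking given the failure of monotone regularization --- or restructure as the paper does, pushing all the analysis onto $C^2$ approximants and smooth domains.
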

 An immediate corollary of this theorem is that the uniqueness of continuous solution to the Dirichlet problem for the quaternionic Monge-Amp\`{e}re equation.

Originally, we define differential operators $d_0$ and $d_1$  and the quaternionic Monge-Amp\`{e}re operator  on the right quaternionic Heisenberg group since there exist  the tangential $k$-Cauchy-Fueter complexes over this group \cite{shi-wang}. Later we find that these definitions also work on the  Heisenberg group, on which the theory is simplified because   its center is only $1$-dimensional while   the right quaternionic Heisenberg group has a $3$-dimensional center.

  This paper is arranged as follows.  In Section 2, we give preliminaries on the Heisenberg group, the group structure of right quaternionic  Heisenberg   line  $\mathscr{H}_q$, the  SubLaplacian  on  $\mathscr{H}_q$ and its fundamental solution. After   recall fundamental results on subharmonic functions on a Carnot group, we give
basic properties of plurisubharmonic functions on the
Heisenberg group. In Section 3, we   discuss operators  $d_0$,  $d_1$  and   nice behavior of brackets $      [Z_{AA '},Z_{BB '}] $, by which we can prove Proposition \ref{prop:d2}. Then we show that the horizontal  quaternionic Hessian  $(\overline{Q_l} {Q_m} u +8
\delta_{lm}\mathbf{i}\partial_t u )$ for a real $C^2 $ function $u$   is hyperhermitian, and prove the expression of the quaternionic Monge-Amp\`{e}re operator in Theorem \ref{thm:delta-QMA} by using linear algebra we developed before in \cite{wang-alg}. In Section 4, we recall definitions of  real forms and positive  forms, and show that $\triangle u$ for  a $C^2 $ plurisubharmonic function $ u$   is a closed   strongly positive
$2$-form. Then we introduce   notions  of a closed positive current  and the "integral" of a positive
$2n$-form current, and show that for any plurisubharmonic   function  $u $, $\triangle u$ is
a closed positive $2$-current. In Section 5, we give proofs of  Chern-Levine-Nirenberg estimate, the  existence of the  quaternionic Monge-Amp\`{e}re
measure   for a continuous   plurisubharmonic function  and the minimum principle.

\section{Plurisubharmonic functions over the
Heisenberg group}
\subsection{The
Heisenberg group}

  We have the following conformal transformations on $\mathscr{H} $:
(1) \emph{dilations}:
$\delta_r:
 (x,{t})\longrightarrow(r x,r^{2}{t}),$ $ r>0;
$
(2) \emph{left translations}:
$
\tau_{(y,s)}:(x,{t})\longrightarrow
(y,s)\cdot(x,{t});
$
(3) \emph{rotations}:
${R_U}:(x,{t})\longrightarrow (   Ux,t),$ for $ U\in {\rm U}(n),$
where
$
  {\rm U}(n)
$ is the unitary group;
(4) the \emph{inversion}:
$
R:(x,{t})\longrightarrow \left( \frac x{|x|^{2}+\mathbf{i}t },
\frac{ {t}}{|x|^{4}+|{t}|^{2}}\right).
$
Define
 vector fields:
\begin{equation}\label{eq:def-Y}
  X_a u( x,t):=\left.\frac d{d\varsigma} u(( x,t)(\varsigma e_a,0))\right|_{\varsigma=0},
\end{equation}on the
  Heisenberg group $\mathscr{H} $, where $e_a=(\ldots,0,1,0,\ldots)\in \mathbb{R}^{4n}$ with only $a$th entry
nonvanishing, $a=1,2,\ldots 4n$. It follows from the multiplication law (\ref{eq:hei}) that
\begin{equation} \label{eq:vector-Y}
X_{2l-1}:=\frac{\partial}{\partial x_{2l-1}}-2  x_{2l }
\frac{\partial}{\partial t },\qquad X_{2l }:=\frac{\partial}{\partial x_{2l }}+2  x_{2l-1}
\frac{\partial}{\partial t }
\end{equation}$  l= 1,\cdots, 2 n $,
  whose brackets are
\begin{equation}\label{eq:bracket-T}
[X_{2l -1},X_{2l }]=4
 \partial_{t },\quad {\rm and\hskip 2mm all\hskip 2mm  other\hskip 2mm  brackets\hskip 2mm  vanish}.
\end{equation}
 $X_a $ is {\it left invariant} in the sense that for any $(y,s)\in \mathscr{H}$,
\begin{equation}\label{eq:left-invariant}
\tau_{(y,s)*}    X_a  = X_a ,
\end{equation}by definition (\ref{eq:def-Y}), which means for fixed $(y,s)\in \mathscr{H}$,
\begin{equation}\label{eq:left-invariant2}
 \left.\left.   X_a \left(\tau_{(y,s)}^{*}f\right)\right|_{(x,t)}=(X_a  f)\right|_{( y,s)(x,t)},
\end{equation}
where the pull back function $(\tau_{(y,s)}^{*}f)(x,t) :=f((y,s)(x,t)) $. On the left hand side above, $X_a$ is the differential operator in (\ref{eq:vector-Y}) with
coefficients at point $(x,t)$, while on the right hand side, $X_a$ is the differential operator with
coefficients at point $( y,s)(x,t)$,

\subsection{Right quaternionic  Heisenberg   lines}

For  quaternionic numbers $ q,p\in \mathbb{H} $, write
\begin{equation*} q = x_1+ \mathbf{i}x_2+ \mathbf{j}x_3+ \mathbf{k}x_4 ,\qquad
  p=y_1+ \mathbf{i}y_2+ \mathbf{j}y_3+ \mathbf{k}y_4.
\end{equation*}Let $ \widehat{p}$ be the column vector in $\mathbb{R}^4$ represented by $p$, i.e.
$
     \widehat{p}:=  \left( y_1,y_{2},y_3,y_{4 }\right)^t,$
and let $q^{\mathbb{R}}$ be the $4\times4$ matrix representing the transformation of left multiplying by $q$, i.e.
\begin{equation}\label{eq:widetildeX-q}
    \widehat{qp} =  q^{\mathbb{R}}  \widehat{p} .
\end{equation}
It is direct to check (cf. \cite{wang1}) that
\begin{equation}\label{eq:X-R}
 q^{\mathbb{R}}:=     \left(
\begin{array}{rrrr }x_1&-    x_2&-  x_3&- x_4 \\    x_2&x_1&- x_4&x_3 \\  x_3&  x_4 &  x_1&-  x_2 \\x_4 &- x_3&   x_2 &x_1
\end{array}\right),
\end{equation}
 and
\begin{equation}\label{eq:iso}
    ( {q_1q_2)}^{\mathbb{R}}= q_1^{\mathbb{R}}q_2^{\mathbb{R}},\qquad ({\overline{q} )}^{\mathbb{R}} =({q  }^{\mathbb{R}})^t.
\end{equation}

The multiplication law (\ref{eq:hei}) of the   Heisenberg group can be written   as
\begin{equation}\label{eq:H-multiply} (y,{s}) \cdot
(x,t)=\left(y+ x,s  +t +2
\sum_{l=0}^{n-1}\sum_{j,k=1}^4   J_{k j}y_{4l+k}x_{4l+j} \right)
\end{equation}
with
\begin{equation}\label{eq:J-symplectic}
   J=\left(\begin{array}{rrrr}0&1&0&0\\-1&0&0&0\\0&0 &0&1\\0&0&-1&0 \end{array}\right).
\end{equation}
The multiplication of the subgroup  $ {\mathscr{H}}_{q }$  is given by
\begin{equation}\label{eq:Hq}
   (q\lambda, t)(q\lambda', t')=\left(q(\lambda+\lambda'), t+t'+2 \sum_{l=0 }^{n-1} \left( \widehat{{q_l}  {\lambda}}  \right)^t  J  \widehat{q_l     {{\lambda'}}}
    \right),
\end{equation}where
  \begin{equation}\label{eq:Bq}
  \sum_{l=0}^{n-1} \left (q_l^{\mathbb{R}}\widehat{\lambda}\right)^t   J  q_l^{\mathbb{R}} \widehat{\lambda'}=\sum_{j,k=1}^4  B^q_{kj} {\lambda}_k  {\lambda}_j',\qquad B^q:=\sum_{l=0 }^{n-1}( {q_l}^{\mathbb{R}})^t  J  q_l^{\mathbb{R}}
  \end{equation} for $\lambda=\lambda_1+\mathbf{i} \lambda_2+\mathbf{j}\lambda_3+\mathbf{k}\lambda_4,\lambda'=\lambda_1'+\mathbf{i} \lambda_2'+\mathbf{j}\lambda_3'+\mathbf{k}\lambda_4'\in \mathbb{H}$. $B^q$
  is a $4\times 4$ skew symmetric  matrix.
   So if we
consider the group $\widetilde{\mathscr{H}}_q$ as the vector space $\mathbb{R}^5$ with
the multiplication given by
\begin{equation}\label{eq:Hq0}
   (  \lambda, t)(  \lambda', t')=\left( \lambda+\lambda' , t +t' +2\sum_{ k,j=1}^4B^q_{kj}\lambda_{k }
   \lambda_{j}' \right),
\end{equation}
 we have the isomorphism of groups:
  \begin{equation}\label{eq:embedding}\begin{split}
    \iota_q: \widetilde{\mathscr{H}}_q \longrightarrow  {\mathscr{H}}_{q },\qquad
    (  \lambda, t)&\mapsto   ( q \lambda, t).
  \end{split}\end{equation} $\widetilde{\mathscr{H}}_q$  is different from the $5$-dimensional Heisenberg group in general. Note that the subgroup  $ {\mathscr{H}}_{q }$ of  $  \mathscr{H} $ is the same if $ q $ is replaced by $q q_0$ for $0\neq q_0\in\mathbb{H}$,

  Write $\textbf{i}_1:=1$, $\textbf{i}_2:=\textbf{i}$, $\textbf{i}_3:=\textbf{j}$ and $\textbf{i}_4:=\textbf{k}$.
Consider left invariant vector fields on $\widetilde{\mathscr{H}_q}$:
$
    \widetilde{X_j}u(  \lambda, t):=\left.\frac {d u}{d\varsigma}( (  \lambda, t)(\varsigma\mathbf{i}_j,0))\right|_{\varsigma =0}
$
for $(  \lambda, t)\in \widetilde{\mathscr{H}_q} $. Since
\begin{equation*}
   (  \lambda, t)(\varsigma\mathbf{i}_j,0)= \left(\cdots, \lambda_j+\varsigma,\cdots, t+2\varsigma\sum_{ k,j=1}^4B^q_{kj}\lambda_{k }\right),
\end{equation*}
we get
 \begin{equation}\label{eq:Xj}
    \widetilde{X_j}=\frac{\partial}{\partial \lambda_j}+2\sum_{ k =1}^4B^q_{kj}\lambda_{k }
\frac{\partial}{\partial t }.
  \end{equation} Define the SubLaplacian on  the   right quaternionic Heisenberg line $\widetilde{\mathscr{H}_{ q}}$ as
$
   \widetilde{\triangle_q}:=\sum_{j=1}^4\widetilde{X_j}^2.
$
Note that for $ q\in\mathbb{H}  $
$
 B^q  = \sum_{l=0 }^{n-1}  {\overline{q_l}}^{\mathbb{R}}   J  q_l^{\mathbb{R}} =- \left(\sum_{l=0 }^{n-1} \overline{q_l   } \mathbf{i}  q_l\right)^{\mathbb{R}}
$ by using  (\ref{eq:iso}) and    $\mathbf{i}^{\mathbb{R}}=-J$ by (\ref{eq:X-R}). Then
\begin{equation}\label{eq:BB}
    B^q(B^q)^t =\Lambda_q^2  I_{4\times4}, \quad {\rm where}   \quad \Lambda_q:=\left|\sum_{l=0 }^{n-1} \overline{q_l   } \mathbf{i}  q_l\right|,
\end{equation} by    (\ref{eq:iso}) again. If  write $q_l = x_{4l+1}+ \mathbf{i}x_{4l+2}+ \mathbf{j}x_{4l+3}+ \mathbf{k}x_{4l+4 }$, we have
$
   \Lambda_q^2:=  S_1^2+S_2^2+S_3^2,
 $
where
$
   S_1:=    \sum_{l=0}^{n-1} (x_{4l+1}^2+x_{4l+2}^2-x_{4l+3}^2-x_{4l+4}^2) ,$ $S_2:= 2\sum_{l=0}^{n-1} (-x_{4l+1} x_{4l+4} +  x_{4l+2} x_{4l+3}) ,$ $ S_3: =    2 \sum_{l=0}^{n-1} (x_{4l+1} x_{4l+3} +x_{4l+2} x_{4l+4}),
$
The {\it degenerate locus} $\mathfrak{D }:=\{q\in \mathbb{H}^n;\Lambda_q=0\}$   is the intersection of three quadratic hypersurfaces in $\mathbb{R}^{4n}$ given by $ S_1= S_2= S_3=0$. Thus ${\mathscr{H}}_{q }$ is abelian if and only if $q\in  \mathfrak{D }$.
\begin{prop}\label{prop:fundamental-solution} For $ q\in\mathbb{H} \setminus  \mathfrak{D }$, the fundamental solution  of $\widetilde{\triangle_q}$ on $\widetilde{\mathscr{H}}_q$ is
\begin{equation}\label{eq:fundamental-solution-q}
 \Gamma_q(\lambda,t)=-\frac {C_q}{\rho_q(\lambda,t)},  \qquad  {\rm i.e.} \quad  \widetilde{\triangle_q}\Gamma_q=\delta_0 ,
\end{equation} where
\begin{equation*}\rho_q(\lambda,t)=  \Lambda_q^2 |\lambda|^4+t^2,\qquad
  C_q^{-1}:=\int_{\widetilde{\mathscr{H}}_q} \frac { 32 \Lambda_q^2 |\lambda|^2 }{(\rho_q(\lambda,t)+1)^3}d\lambda dt.
\end{equation*}
\end{prop}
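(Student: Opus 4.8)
The plan is to work in the global coordinates $(\lambda,t)\in\mathbb{R}^5$ on $\widetilde{\mathscr{H}}_q$ and to reduce $\widetilde{\triangle_q}$ to an explicit second-order operator, exploiting the two structural facts about $B^q$ already recorded: it is skew symmetric (see (\ref{eq:Bq})) and it satisfies $B^q(B^q)^t=\Lambda_q^2 I_{4\times4}$ (see (\ref{eq:BB})). First I would expand $\widetilde{X_j}^2$ from (\ref{eq:Xj}). Writing $b_j:=2\sum_{k=1}^4 B^q_{kj}\lambda_k$, so that $\widetilde{X_j}=\partial_{\lambda_j}+b_j\partial_t$ with $b_j$ independent of $t$, one gets $\widetilde{X_j}^2=\partial_{\lambda_j}^2+2b_j\partial_{\lambda_j}\partial_t+b_j^2\partial_t^2$, the potential first-order term $(\partial_{\lambda_j}b_j)\partial_t$ dropping out because $\partial_{\lambda_j}b_j=2B^q_{jj}=0$. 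Summing over $j$ and using $\sum_j b_j^2=4\sum_{k,m}(B^q(B^q)^t)_{km}\lambda_k\lambda_m=4\Lambda_q^2|\lambda|^2$ yields
\[
 \widetilde{\triangle_q}=\Delta_\lambda+4\sum_{j,k=1}^4 B^q_{kj}\lambda_k\,\partial_{\lambda_j}\partial_t+4\Lambda_q^2|\lambda|^2\,\partial_t^2,
\]
where $\Delta_\lambda=\sum_{j}\partial_{\lambda_j}^2$ is the ordinary Laplacian in $\lambda$.

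Next I would feed a radial profile $g(\rho_q)$ into this operator, with $\rho_q=\Lambda_q^2|\lambda|^4+t^2$. Using $\partial_{\lambda_j}\rho_q=4\Lambda_q^2|\lambda|^2\lambda_j$ and $\partial_t\rho_q=2t$, the only mixed term is proportional to $\sum_{j,k}B^q_{kj}\lambda_k\lambda_j$, which vanishes by skew symmetry of $B^q$; collecting the rest gives the key identity
\[
 \widetilde{\triangle_q}\bigl(g(\rho_q)\bigr)=16\Lambda_q^2|\lambda|^2\bigl[\rho_q\,g''(\rho_q)+2g'(\rho_q)\bigr].
\]
Taking $g(\rho_q)=\rho_q^{-1}$ gives $\rho_q g''+2g'=2\rho_q^{-2}-2\rho_q^{-2}=0$, so $\widetilde{\triangle_q}\Gamma_q=0$ on $\widetilde{\mathscr{H}}_q\setminus\{0\}$: the candidate $\Gamma_q$ is $\widetilde{\triangle_q}$-harmonic away from the origin.

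It then remains to fix the constant, i.e.\ to prove $\widetilde{\triangle_q}\Gamma_q=\delta_0$ in the sense of distributions. Since $\rho_q^{-1}$ is homogeneous of degree $-4$ under the dilations $\delta_r\colon(\lambda,t)\mapsto(r\lambda,r^2t)$ and the homogeneous dimension is $Q=6$, it is locally integrable and defines a distribution; by the previous paragraph its $\widetilde{\triangle_q}$ is supported at the origin. To compute the mass I would use the smooth regularization $g(\rho_q)=-(\rho_q+1)^{-1}$, for which the key identity gives precisely
\[
 \widetilde{\triangle_q}\Bigl(-\tfrac{1}{\rho_q+1}\Bigr)=\frac{32\Lambda_q^2|\lambda|^2}{(\rho_q+1)^3}=:\psi\ge0,\qquad \int_{\widetilde{\mathscr{H}}_q}\psi\,d\lambda\,dt=C_q^{-1},
\]
the integral being finite by the same homogeneity count (the integrand is $\sim|\lambda|^2\rho_q^{-3}$, of degree $-10<-Q$, at infinity and bounded near $0$). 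From the scaling relation $\widetilde{X_j}(f\circ\delta_r)=r\,(\widetilde{X_j}f)\circ\delta_r$, hence $\widetilde{\triangle_q}(f\circ\delta_r)=r^2(\widetilde{\triangle_q}f)\circ\delta_r$, one checks
\[
 \widetilde{\triangle_q}\bigl(r^{Q-2}\,g(\rho_q)\circ\delta_r\bigr)=r^{Q}\,\psi\circ\delta_r=:\psi_r,\qquad r^{Q-2}\,g(\rho_q)\circ\delta_r=-\frac{1}{\rho_q+r^{-4}}.
\]
As $r\to\infty$ the displayed function converges to $-\rho_q^{-1}$ pointwise on $\widetilde{\mathscr{H}}_q\setminus\{0\}$ and, being dominated in absolute value by the locally integrable $\rho_q^{-1}$, converges to it in $L^1_{\mathrm{loc}}$; meanwhile $\{\psi_r\}_r$ is an approximate identity of total mass $C_q^{-1}$, so $\psi_r\to C_q^{-1}\delta_0$. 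Passing to the limit in $\mathcal{D}'$ gives $\widetilde{\triangle_q}(-\rho_q^{-1})=C_q^{-1}\delta_0$, whence $\widetilde{\triangle_q}\Gamma_q=\widetilde{\triangle_q}(-C_q\rho_q^{-1})=\delta_0$, as claimed in (\ref{eq:fundamental-solution-q}).

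The genuine obstacle is this last step. Harmonicity off the origin only determines $\widetilde{\triangle_q}\Gamma_q$ up to a multiple of $\delta_0$ (the homogeneity of degree $-Q$ rules out derivatives of $\delta_0$), and the entire content is the value of that multiple. The regularization $g(\rho_q)=-(\rho_q+1)^{-1}$ is engineered so that $\widetilde{\triangle_q}g$ is exactly the nonnegative bump $\psi$ whose elementary integral is the stated $C_q^{-1}$, and the dilation argument converts $\int\psi$ into the delta coefficient with no surface integral. I expect the careful bookkeeping of the dilation weights (weight $r$ for each $\widetilde{X_j}$, volume weight $r^Q$ with $Q=6$) and the justification of the distributional limit to require the most attention, while the cancellations of the first two paragraphs are forced by the skew symmetry of $B^q$ and $B^q(B^q)^t=\Lambda_q^2 I_{4\times4}$. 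Alternatively, (\ref{eq:BB}) says $\widetilde{\mathscr{H}}_q$ is a group of Heisenberg type, so the statement also follows from Kaplan's fundamental-solution formula; I prefer the direct computation above since it is self-contained.
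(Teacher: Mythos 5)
Your proposal is correct and takes essentially the same route as the paper: the paper also regularizes as $-1/(\rho_q+\varepsilon)$ (your $-1/(\rho_q+r^{-4})$ with $\varepsilon=r^{-4}$), computes $\widetilde{\triangle_q}\bigl(-1/(\rho_q+\varepsilon)\bigr)=32\Lambda_q^2|\lambda|^2\varepsilon/(\rho_q+\varepsilon)^3$ from the same two identities $\sum_j\widetilde{X_j}^2\rho_q=32\Lambda_q^2|\lambda|^2$ and $\sum_j(\widetilde{X_j}\rho_q)^2=16\Lambda_q^2|\lambda|^2\rho_q$ (both forced by skew symmetry of $B^q$ and $B^q(B^q)^t=\Lambda_q^2 I$), and identifies the limit as $C_q^{-1}\delta_0$ by rescaling. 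Your general radial identity and explicit dilation bookkeeping merely flesh out the paper's terse ``by rescaling and letting $\varepsilon\rightarrow 0+$'' step.
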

\begin{proof}
Note that for $\varepsilon>0$, we have
\begin{equation}\label{eq:derivative0}
\sum_{ j =1}^4  \widetilde{X_j}^2\frac {-1}{\rho_q+\varepsilon}=  \frac {\Sigma_{ j =1}^4 \widetilde{X_j}^2\rho_q}{(\rho_q+\varepsilon)^2} -2\frac {\Sigma_{ j =1}^4 (\widetilde{X_j}\rho_q)^2}{(\rho_q+\varepsilon)^3}.
\end{equation}
It follows from the expression (\ref{eq:Xj}) of $\widetilde{ X_j}$ that
 \begin{equation*}
  \widetilde{ X_j}\rho_q=4\Lambda_q^2 |\lambda|^2 \lambda_j +4\sum_{ k =1}^4B^q_{kj}\lambda_{k }t,
\end{equation*}and
\begin{equation}\label{eq:derivative1}\begin{split}
  \sum_{ j =1}^4  \widetilde{X_j}^2\rho_q&=4\sum_{ j =1}^4\Lambda_q^2 |\lambda|^2   +8\sum_{ j =1}^4\Lambda_q^2  \lambda_j^2 +8\sum_{ j =1}^4 \left(\sum_{ k =1}^4B^q_{kj}\lambda_{k }\right)^2\\&
 =24 \Lambda_q^2 |\lambda|^2+8\left\langle  B^q(B^q)^t\lambda, \lambda\right\rangle =32 \Lambda_q^2 |\lambda|^2 ,
  \end{split}\end{equation}
by   skew symmetry of $B^q$ and using (\ref{eq:BB}).
On the other hand, we have
\begin{equation}\label{eq:derivative2}
 \sum_{ j =1}^4 ( \widetilde{X_j}\rho_q)^2=16 \Lambda_q^4 |\lambda|^4 |\lambda|^2 +  16\Lambda_q^2   |\lambda|^2 t^2=16 \Lambda_q^2|\lambda|^2\rho_q(\lambda,t)
\end{equation}by $\sum_{ j,  k =1}^4  B^q_{k j}\lambda_k\lambda_{j }=0$.
  Substituting (\ref{eq:derivative1})-(\ref{eq:derivative2}) into (\ref{eq:derivative0}) to get
\begin{equation*}
  \sum_{ j =1}^4 \widetilde{ X_j}^2\frac {-1}{\rho_q+\varepsilon}= 32 \Lambda_q^2 \frac { |\lambda|^2\varepsilon}{(\rho_q+\varepsilon)^3}.
\end{equation*}Then $\int \varphi \widetilde{\triangle}_b (\frac {-1}{\rho_q+\varepsilon} )\rightarrow C_q^{-1}\varphi(0,0) $ for $\varphi\in C_0^\infty (\widetilde{\mathscr{H}}_q) $ by recaling and letting $\varepsilon\rightarrow0+$. We get the result.\end{proof}

\subsection{Subharmonic functions on Carnot groups} A {\it Carnot
group}  $ \mathbb{G}$  of step $r \geq 1$ is a simply connected nilpotent Lie group whose Lie
algebra $\mathfrak g$ is stratified, i.e. $\mathfrak g=\mathfrak g_1\oplus \cdots\oplus\mathfrak g_r$ and $[\mathfrak g_1, \mathfrak g_j]=\mathfrak g_{j+1}$.
Let $Y_1, \cdots ,Y_p$ are smooth left invariant vector fields on a Carnot group $ \mathbb{G}$  and
homogeneous of degree one with respect to
the dilation group of $ \mathbb{G}$, such that $\{Y_1, \cdots ,Y_p\}$ is a basis
of  $\mathfrak g_1$.
There exists a homogeneous norm $\|\cdot \|$ on a Carnot group $ \mathbb{G} $ \cite{BL} such that
\begin{equation}\label{eq:fundamental-solution}
   \Gamma(\xi,\eta) := -\frac {C_Q }{\|\xi^{-1} \eta\|^{
Q- 2 }} ,
\end{equation}
for some $Q>0$ is a fundamental solution for the {\it SubLaplacian} $\triangle_{\mathbb{G}}$  given by
$
    \triangle_{\mathbb{G}}=\sum_{j=1}^p Y_j^2,
$ (the fundamental solution used in \cite{BL} is different from the usual one (\ref{eq:fundamental-solution}) by a minus sign).
 We   denote  by $D(\xi, r)$ the  {\it ball} of center $\xi$ and radius $r$, i.e.
 \begin{equation}\label{eq:ball}
    D(\xi, r) = \{\eta \in \mathbb{G} | \|\xi^{-1}\eta\| < r \}.
 \end{equation}

Recall the
{\it representation formulae} \cite{BL} for any smooth function
$u$ on  $ \mathbb{G}$:
\begin{equation}\label{eq:representation-formulae}
   u(\xi) ={M}_r^{\mathbb{G}}(u)(\xi)-N_r(\triangle_{\mathbb{G}}u)(\xi)=\mathscr{{M}}_r^{\mathbb{G}}(u)(\xi)-\mathscr{N}_r(\triangle_{\mathbb{G}}u)(\xi),
\end{equation}for every $\xi \in\Omega$ and $r > 0 $ such that $D(\xi, r)\subset  \Omega$,
where
\begin{equation}\label{eq:meanvalue''}  \begin{split}
    {M}_r^{\mathbb{G}}(u)(\xi)&:=\frac {m_Q}{r^Q}\int_{  D(\xi, r)}K(\xi^{-1}\eta)u(\eta)dV(\eta),\\
    {N}_r^{\mathbb{G}}(u)(\xi)&:=\frac {n_Q}{r^Q}\int_0^r\rho^{Q-1}d\rho\int_{  D(\xi, \rho)}\left(\frac 1{\|\xi^{-1}\eta \|^{Q-2} }-\frac 1{\rho^{Q-2} } \right)u(\eta)dV(\eta),
 \end{split}   \end{equation}
 and
 \begin{equation}\label{eq:meanvalue'''}  \begin{split}
     \mathscr{{M}}_r^{\mathbb{G}}(u)(\xi)&:= \int_{ \partial D(\xi, r)}\mathscr{K}(\xi^{-1}\eta)u(\eta)dS(\eta), \\
    \mathscr{{N}}_r^{\mathbb{G}}(u)(\xi)&:=  {C_Q} \int_{  D(\xi, r)}\left(\frac 1{\|\xi^{-1}\eta \|^{Q-2} }-\frac 1{\rho^{Q-2} } \right)  u(\eta)dV(\eta),
 \end{split}   \end{equation}
 for   some positive constant $m_Q,n_Q$,
 and \begin{equation}\label{eq:meanvalue'} {K}=
  |\nabla_{\mathbb{G}}d|^2  ,\qquad
  \mathscr{K}=\frac
   {|\nabla_{\mathbb{G}}\Gamma|^2}{|\nabla \Gamma|}.
 \end{equation} Here $\nabla_{\mathbb{G}}$ the vector valued differential operator $(Y_1, \cdots ,Y_p)$ and $\nabla$ is the usual gradient,
  $d(\xi)=\|\xi\|$, $dV $ is the volume element and
  $dS $ is the surface measure on $\partial D(\xi, r)$. Integrals $ {{M}}_r^{\mathbb{G}}(u)$ and $\mathscr{{M}}_r^{\mathbb{G}}(u)$
  are related by the coarea formula.

A function $u$ on a domain $\Omega\subset \triangle_{\mathbb{G}}$ is called {\it harmonic} if $\triangle_{\mathbb{G}}u=0$ in the sense
of distributions.
Then a  harmonic function $u$ in an open set  $\Omega$
satisfies the {\it mean-value formula}
\begin{equation*}
   u(\xi) =\mathscr{M}_r^{\mathbb{G}}(u)(\xi)= {M}_r^{\mathbb{G}}(u)(\xi) ,
\end{equation*}by (\ref{eq:representation-formulae}).
   For an open set $ \Omega \subset \mathbb{G}$, we say that an upper semicontinuous function function $u : \Omega\rightarrow
 [-\infty,\infty)$
is {\it $\triangle_{\mathbb{G}}$-subharmonic} if for every $\xi \in \Omega$
there exists $r_\xi > 0$ such that
\begin{equation}\label{eq:submeanvalue}
   u(\xi)\leq M_r^{\mathbb{G}}(u)(\xi) \qquad {\rm for}\quad r<r_\xi .
\end{equation}

\begin{prop} {\rm (The maximum principle for the SubLaplacian \cite{BL})}
   If $\Omega\subseteq \mathbb{G}$ is a bounded open set, for every $u \in C^2(\Omega)$ satisfying $\Delta_{\mathbb{G}} u \geq 0$
in $\Omega$ and $lim sup_{\xi\rightarrow\eta} u(\xi) \leq 0$ for any $\eta\in\partial\Omega$, we have $u \leq 0$ in $\Omega$.
\end{prop}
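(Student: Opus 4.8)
The plan is to prove this weak maximum principle by a perturbation (barrier) argument, reducing everything to the elementary fact that a $C^2$ function with \emph{strictly} positive SubLaplacian can have no interior local maximum. First I would construct an auxiliary function with constant positive SubLaplacian. Writing $x'=(x_1',\ldots,x_p')$ for the coordinates along the first stratum $\mathfrak g_1$ and setting $\phi:=|x'|^2=\sum_{i=1}^p (x_i')^2$, I note that each $Y_j$ is homogeneous of degree one, so its first-layer component has constant coefficients and annihilates all higher-layer differentiations of a first-layer coordinate; in coordinates adapted to the stratification this gives $Y_j x_i'=\delta_{ij}$, hence $Y_j^2\phi=2$ and $\triangle_{\mathbb{G}}\phi=\sum_{j=1}^p Y_j^2\phi=2p>0$. (On $\mathscr{H}$ one has $p=4n$, so $2p=8n$, in agreement with a direct computation of $\triangle_{\mathbb{G}}|x|^2$ from (\ref{eq:vector-Y}).) Since $\phi$ is a polynomial it is bounded on the bounded set $\Omega$; put $C:=\sup_{\overline{\Omega}}\phi<\infty$.

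Next, for $\varepsilon>0$ set $u_\varepsilon:=u+\varepsilon\phi$, so that $\triangle_{\mathbb{G}}u_\varepsilon=\triangle_{\mathbb{G}}u+2p\varepsilon>0$ on all of $\Omega$. The core step is to show that such a $u_\varepsilon$ admits no interior local maximum. At an interior local maximum $\xi_*$, every ordinary first-order partial derivative of $u_\varepsilon$ vanishes, whence $Y_j u_\varepsilon(\xi_*)=0$ for all $j$. Expanding $Y_j^2 u_\varepsilon=\sum_i (Y_j a_{ji})\partial_i u_\varepsilon+\sum_{i,k}a_{ji}a_{jk}\,\partial_i\partial_k u_\varepsilon$, where $A=(a_{ji})$ is the coefficient matrix of the frame $\{Y_j\}$, the first-order terms drop out at $\xi_*$ and leave $\triangle_{\mathbb{G}}u_\varepsilon(\xi_*)=\mathrm{tr}\bigl(A^tA\cdot\mathrm{Hess}(u_\varepsilon)(\xi_*)\bigr)$. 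Here $A^tA$ is positive semidefinite while $\mathrm{Hess}(u_\varepsilon)(\xi_*)$ is negative semidefinite at a maximum, so this trace is $\le 0$, contradicting $\triangle_{\mathbb{G}}u_\varepsilon>0$. I expect this to be the main conceptual point: the degenerate second-derivative test, resting on the matrix inequality $\mathrm{tr}(PN)\le 0$ for $P\succeq 0$ and $N\preceq 0$ (seen from $\mathrm{tr}(PN)=\mathrm{tr}(P^{1/2}NP^{1/2})$).

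Finally I would combine this with compactness of $\overline{\Omega}$. The hypothesis $\limsup_{\xi\to\eta}u(\xi)\le 0$ on $\partial\Omega$, together with continuity of $u$ in $\Omega$, shows that $u$ and hence $u_\varepsilon$ is bounded above on $\Omega$; set $M_\varepsilon:=\sup_\Omega u_\varepsilon$. Choosing a maximizing sequence and passing to a subsequential limit $\xi_*\in\overline{\Omega}$, the no-interior-maximum property forces $\xi_*\in\partial\Omega$, since otherwise $u_\varepsilon$ would attain its supremum at an interior point. Using continuity of $\phi$ up to the boundary,
\begin{equation*}
 M_\varepsilon\le\limsup_{\xi\to\xi_*}u_\varepsilon(\xi)\le\limsup_{\xi\to\xi_*}u(\xi)+\varepsilon\,\phi(\xi_*)\le\varepsilon C.
\end{equation*}
Hence for every $\xi\in\Omega$ we get $u(\xi)\le u_\varepsilon(\xi)\le M_\varepsilon\le\varepsilon C$, and letting $\varepsilon\to 0^+$ yields $u\le 0$ on $\Omega$. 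As an alternative route one could instead use the representation formula (\ref{eq:representation-formulae}): the kernel of $N_r$ is nonnegative, because $\|\xi^{-1}\eta\|<\rho$ forces $\|\xi^{-1}\eta\|^{2-Q}\ge\rho^{2-Q}$ for $Q>2$, so $\triangle_{\mathbb{G}}u\ge 0$ gives the sub-mean value inequality $u\le M_r^{\mathbb{G}}(u)$, after which one runs the usual open-and-closed argument on $\{u=\sup_\Omega u\}$ via $M_r^{\mathbb{G}}(1)=1$; this variant, however, requires the almost-everywhere positivity of the mean-value kernel $K$, precisely the subtlety that the barrier argument avoids.
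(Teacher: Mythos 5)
Your proof is correct, but it cannot be compared line-by-line with ``the paper's proof'' because the paper offers none: the proposition is quoted verbatim from Bonfiglioli--Lanconelli \cite{BL}, where it lives inside the potential theory for sub-Laplacians (fundamental solution, mean-value operators, Bony-type arguments). Your barrier argument is elementary and self-contained, and each step checks out: $\phi=|x'|^2$ has constant positive SubLaplacian because homogeneity of degree one forces the first-layer coefficients of the $Y_j$ to be constants; at an interior local maximum the identity $\triangle_{\mathbb{G}}u_\varepsilon(\xi_*)=\mathrm{tr}\bigl(A^tA\,\mathrm{Hess}(u_\varepsilon)(\xi_*)\bigr)$ together with $\mathrm{tr}(PN)\le 0$ for $P\succeq0$, $N\preceq0$ gives the contradiction; and the $\varepsilon\to0^+$ limit uses only the $\limsup$ boundary hypothesis and compactness of $\overline{\Omega}$, so no connectedness of $\Omega$, no boundary regularity, and no hypoellipticity are needed. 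One minor point: for an arbitrary basis $\{Y_j\}$ of $\mathfrak g_1$ the identity $Y_jx_i'=\delta_{ij}$ need not hold verbatim; in general $Y_j^2\phi=2|c_j|^2$ where $c_j$ is the (constant) first-layer coefficient vector of $Y_j$, so $\triangle_{\mathbb{G}}\phi$ is still a positive constant, which is all your argument uses --- or, as you note, one simply adapts the exponential coordinates to the given basis. The trade-off between the two routes: the paper's citation keeps everything inside the \cite{BL} framework that it also needs for the representation formulas and for Theorem 4.3 (the characterization of subharmonicity), where the maximum principle is essentially a by-product of the sub-mean-value inequality --- this is exactly your sketched alternative, and you correctly flag its dependence on almost-everywhere positivity of the kernel $K$, which vanishes at characteristic points; your barrier proof buys independence from all of that machinery at the cost of redoing, in an elementary way, a result the paper is content to import.
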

\begin{thm}{\rm (Theorem 4.3 in \cite{BL})}\label{thm:meanvalue}
  Let $\Omega$  be an open set in $\mathbb{G}$ and $u : \Omega\rightarrow [-\infty,+\infty)$ be an
upper semicontinuous function. Then, the  following statements are equivalent:

(i) $u$ is subharmonic;

(ii) $u \in L^1_{loc}(\Omega)$, $u(\xi) = lim_{r\rightarrow 0+} M_r^{\mathbb{G}}(u)(\xi)$ for every $\xi\in\Omega$ and
$\triangle_{\mathbb{G}} u \geq 0 $ in
$\Omega$  in the   sense of distributions.
\end{thm}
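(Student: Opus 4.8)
The plan is to prove the two implications separately, using the representation formula (\ref{eq:representation-formulae}) as the main tool and a left-invariant group mollification to bridge between the smooth and the merely upper semicontinuous cases. Throughout, the decisive structural facts are that the gauge kernel $\|\xi^{-1}\eta\|^{-(Q-2)}-r^{-(Q-2)}$ occurring in $N_r$ and $\mathscr{N}_r$ (see (\ref{eq:meanvalue''})--(\ref{eq:meanvalue'''})) is nonnegative on $D(\xi,r)$, and that $\triangle_{\mathbb{G}}$ is left invariant by (\ref{eq:left-invariant}) and formally self-adjoint with respect to Haar measure.

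I would first record a monotonicity statement: for $u\in C^2$ the representation formula gives $M_r^{\mathbb{G}}(u)(\xi)-u(\xi)=N_r(\triangle_{\mathbb{G}}u)(\xi)$, and since the gauge kernel is nonnegative and grows with $r$, the averages $r\mapsto M_r^{\mathbb{G}}(u)(\xi)$ and $r\mapsto\mathscr{M}_r^{\mathbb{G}}(u)(\xi)$ are nondecreasing whenever $\triangle_{\mathbb{G}}u\ge 0$; the two averages are tied together by the coarea formula. Granting (i), the sub-mean-value inequality (\ref{eq:submeanvalue}) shows that if $u(\xi)>-\infty$ at one point then the average of $u$ over a small ball is finite, while upper semicontinuity bounds $u$ from above on compacta; a connectedness argument then propagates local finiteness and yields $u\in L^1_{\rm loc}(\Omega)$. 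Upper semicontinuity also forces $\limsup_{r\to 0+}M_r^{\mathbb{G}}(u)(\xi)\le u(\xi)$, which combined with (\ref{eq:submeanvalue}) gives $u(\xi)=\lim_{r\to 0+}M_r^{\mathbb{G}}(u)(\xi)$.

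The crux of (i)$\Rightarrow$(ii) is upgrading the mean-value inequality to $\triangle_{\mathbb{G}}u\ge 0$ distributionally. I would mollify by left group convolution $u_\varepsilon:=u*\phi_\varepsilon$ with a nonnegative kernel $\phi_\varepsilon$ radial in the homogeneous norm $\|\cdot\|$. Because $\triangle_{\mathbb{G}}$ is left invariant it commutes with this convolution, and averaging (\ref{eq:submeanvalue}) against $\phi_\varepsilon$ shows each smooth $u_\varepsilon$ is again subharmonic and, by the monotonicity above, that $u_\varepsilon\downarrow u$ as $\varepsilon\to 0+$. For smooth $u_\varepsilon$ the mean-value inequality is equivalent through (\ref{eq:representation-formulae}) to $\triangle_{\mathbb{G}}u_\varepsilon\ge 0$ pointwise; testing against $0\le\varphi\in C_0^\infty(\Omega)$ and letting $\varepsilon\to 0+$ gives $\int u\,\triangle_{\mathbb{G}}\varphi\ge 0$, i.e. $\triangle_{\mathbb{G}}u\ge 0$ as a distribution. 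Conversely, for (ii)$\Rightarrow$(i) the same mollification produces smooth $u_\varepsilon$ with $\triangle_{\mathbb{G}}u_\varepsilon\ge 0$; nonnegativity of the gauge kernel makes $N_r(\triangle_{\mathbb{G}}u_\varepsilon)(\xi)\ge 0$, so (\ref{eq:representation-formulae}) yields $u_\varepsilon(\xi)\le M_r^{\mathbb{G}}(u_\varepsilon)(\xi)$. Since $u_\varepsilon\to u$ in $L^1_{\rm loc}$ the averages converge, $M_r^{\mathbb{G}}(u_\varepsilon)\to M_r^{\mathbb{G}}(u)$, while the monotone limit of $u_\varepsilon$ is identified with $u$ using the hypothesis $u(\xi)=\lim_{r\to 0+}M_r^{\mathbb{G}}(u)(\xi)$; passing to the limit recovers (\ref{eq:submeanvalue}).

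The hard part is the mollification step. One must build a group-convolution smoothing that simultaneously commutes with the non-Euclidean operator $\triangle_{\mathbb{G}}$, preserves the sub-mean-value property, and produces a monotone family $u_\varepsilon\downarrow u$. This rests essentially on the left invariance (\ref{eq:left-invariant}) and on the monotonicity of the means, which in turn depends on the positivity of the gauge kernel in (\ref{eq:meanvalue''})--(\ref{eq:meanvalue'''}). Verifying these positivity and monotonicity properties in the homogeneous rather than Euclidean geometry, and carefully controlling the set where $u=-\infty$ so that all averages remain well defined, is where the genuine work lies; the remaining passages to the limit are then routine.
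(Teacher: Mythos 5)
The paper itself offers no proof of Theorem \ref{thm:meanvalue}: it is quoted as Theorem 4.3 of \cite{BL}, so your attempt must be judged on its own merits, and it has a genuine gap --- precisely the one this paper repeatedly warns about. Your argument hinges on the claim that the group mollifications decrease pointwise to $u$, ``by the monotonicity of the means.'' In the Euclidean case that deduction works because a radial mollifier is a weighted superposition of the spherical mean-value operators centered at the point in question. On a Carnot group this decomposition is false: with the paper's convolution, $\chi_\varepsilon * u(\xi)=\int\chi_\varepsilon(\zeta)\,u(\zeta^{-1}\xi)\,dV(\zeta)$ averages $u$ over the sets $\{\zeta^{-1}\xi:\|\zeta\|=\rho\}$, which by the non-commutativity (\ref{eq:non-commutativity}) are not the gauge spheres $\{\eta:\|\xi^{-1}\eta\|=\rho\}$ entering $M_r^{\mathbb{G}}$ and $\mathscr{M}_r^{\mathbb{G}}$, and moreover the kernels $K,\mathscr{K}$ in (\ref{eq:meanvalue''})--(\ref{eq:meanvalue'}) are not constant on those spheres. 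Hence monotonicity of $r\mapsto M_r^{\mathbb{G}}(u)(\xi)$ does not transfer to monotonicity of $\varepsilon\mapsto(\chi_\varepsilon*u)(\xi)$. The paper says this explicitly right after (\ref{eq:regularization}) (``we do not know whether $\chi_\varepsilon* u$ is decreasing as $\varepsilon$ decreasing to $0$ \dots because of the non-commutativity''), and the remark that follows explains that the hypothesis $u(\xi)=\lim_{r\rightarrow0+}M_r^{\mathbb{G}}(u)(\xi)$ is carried in statement (ii) precisely because this monotonicity is unavailable; a proof that assumes $u_\varepsilon\downarrow u$ is assuming the point at issue. There is also a mechanical error: you set $u_\varepsilon=u*\phi_\varepsilon$ and assert that left invariance makes $\triangle_{\mathbb{G}}$ commute with this convolution, but by (\ref{eq:convolution-Y}) a left invariant field lands on the \emph{second} factor, $Y(u*v)=u*Yv$, so $\triangle_{\mathbb{G}}(u*\phi_\varepsilon)=u*\triangle_{\mathbb{G}}\phi_\varepsilon$, which is neither $(\triangle_{\mathbb{G}}u)*\phi_\varepsilon$ nor of a definite sign; the mollifier must sit on the left, $\chi_\varepsilon*u$, as in (\ref{eq:regularization}) and Proposition \ref{prop:PSH-property} (6).

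The outline can, however, be repaired without monotone mollification. For (i)$\Rightarrow$(ii): your derivation of the pointwise limit condition uses only upper semicontinuity, the normalization $\frac{m_Q}{r^Q}\int_{D(\xi,r)}K(\xi^{-1}\eta)\,dV(\eta)=1$ and (\ref{eq:submeanvalue}), and is fine; for the distributional inequality you need only $\chi_\varepsilon*u\rightarrow u$ in $L^1_{\rm loc}$ (not monotonically), since subharmonicity of $\chi_\varepsilon*u$ (the Fubini argument of Proposition \ref{prop:PSH-property} (6)) gives
$0\le\int\varphi\,\triangle_{\mathbb{G}}(\chi_\varepsilon*u)\,dV=\int(\chi_\varepsilon*u)\,\triangle_{\mathbb{G}}\varphi\, dV\rightarrow\int u\,\triangle_{\mathbb{G}}\varphi\, dV$ for $0\le\varphi\in C_0^\infty(\Omega)$. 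For (ii)$\Rightarrow$(i): from $\chi_\varepsilon*u\le M_r^{\mathbb{G}}(\chi_\varepsilon*u)$ and $L^1_{\rm loc}$ convergence you get $u\le M_r^{\mathbb{G}}(u)$ only almost everywhere; the upgrade to \emph{every} $\xi$ should go not through pointwise convergence of $u_\varepsilon$ (which you cannot control) but through monotonicity in $r$: each $r\mapsto M_r^{\mathbb{G}}(\chi_\varepsilon*u)(\xi)$ is nondecreasing by (\ref{eq:representation-formulae}) and the nonnegativity of the kernel of $N_r$, hence so is the limit $r\mapsto M_r^{\mathbb{G}}(u)(\xi)$, and then the hypothesis in (ii) gives, for every $\xi$ and admissible $r$,
$u(\xi)=\lim_{\rho\rightarrow0+}M_\rho^{\mathbb{G}}(u)(\xi)=\inf_{\rho}M_\rho^{\mathbb{G}}(u)(\xi)\le M_r^{\mathbb{G}}(u)(\xi)$,
which is (\ref{eq:submeanvalue}). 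That is exactly the role the extra hypothesis in (ii) is designed to play. Two further points still need honest work in this scheme: the Fubini step requires the sub-mean-value inequality at a fixed radius $r$ uniformly over the points $\zeta^{-1}\xi$, $\zeta\in{\rm supp}\,\chi_\varepsilon$, whereas (\ref{eq:submeanvalue}) only provides point-dependent radii $r_\xi$; and the connectedness argument for $u\in L^1_{\rm loc}$ must be carried out in the homogeneous geometry.
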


 When $\mathbb{G}$ is the    Heisenberg group   $\mathscr{H} $ in (\ref{eq:hei}), the SubLaplacian is
 \begin{equation*}
    \triangle_b=\sum_{a=1}^{4n} {X}_a^2, \end{equation*}
    where $X_a$'s are given by (\ref{eq:vector-Y}). It is known that the fundamental solution of $  {\triangle}_b$ is  $-C_Q\|\cdot\|^{-Q+2}$ for
    some constant $C_Q>0$ as in Proposition \ref{prop:fundamental-solution}, with the norm   given by
  \begin{equation*}
   \|(x,t)\|:=(|x|^4+t^2)^{\frac 14}.
 \end{equation*} The invariant Haar measure  on $\mathscr{H} $ is the usual Lebesgue  measure $dxdt $ on $\mathbb{R}^{4n+3}$.
 \begin{equation}\label{eq:meanvalue-2}
 \mathscr{K}(x,t)=  \frac
   {|\nabla_{\mathbb{G}}\Gamma|^2}{|\nabla \Gamma|}(x,t)=  \frac {2C_Q(Q-2)}{\|(x,t)\|^{Q-2}}\frac {  |x|^2}{\sqrt{4|x|^6+
   t^2}},
 \end{equation}in the
 mean-value formula,
where $Q:=4n+2$, the {\it homogeneous dimension} of the $(4n+1)$-dimensional Heisenberg group $\mathscr{H} $.

When $\mathbb{G}$ is the      group   $\widetilde{\mathscr{H}}_q$ in (\ref{eq:Hq0}), the SubLaplacian is
 $  \widetilde{{\triangle}}_b$. Because of the fundamental solution of $  \widetilde{{\triangle}}_b$ given in Proposition \ref{prop:fundamental-solution}, its norm is given by
  \begin{equation*}
   \|(\lambda,t)\|_q:=(\Lambda_q^2|\lambda|^4+t^2)^{\frac 14}.
 \end{equation*} The invariant Haar measure  on $\widetilde{\mathscr{H}}_q$ is the usual Lebesgue  measure $d\lambda dt $ on $\mathbb{R}^{5}$. Its homogeneous dimension is $6$, and the
 mean-value formulae  becomes
\begin{equation}\label{eq:meanvalue-q}\begin{split}
   \mathscr{M}_r^{q }(u) (\lambda,t):&   = \int_{\partial D_q(0,r)}   \frac {|\nabla_{ q}\Gamma_{ q}|^2}{|\nabla \Gamma_{ q}|}\left((\lambda,t)^{-1}( \lambda',t')\right ) u  ( \lambda',t')  dS ( \lambda',t') ,\\
    M_r^{ q} (\eta)(\lambda,t):&= \frac {m_q}{r^6}\int_{  D_q(0,r)} K_{ q}\left((\lambda,t)^{-1}( \lambda',t')\right ) u  ( \lambda',t')  d S ( \lambda',t'),
\end{split}  \end{equation}
where $D_q(0,r)$ is the ball of radius $r$ and centered at the origin in $\widetilde{\mathscr{H}}_q$ in terms of the norm $\|\cdot\|_q$,   $m_q$ is the constant in the
representation formula (\ref{eq:meanvalue''}) for the group $\widetilde{\mathscr{H}}_q$, and by (\ref{eq:derivative2}),
\begin{equation*}
    K_{ q} (\lambda,t)=\sum_{ j =1}^4 \left( \widetilde{X_j}\rho_q^{\frac 14}\right)^2= \frac { \Lambda_q^2|\lambda|^2}{ \|(\lambda,t)\|_q^{ 2}},
\end{equation*}
which is homogeneous of degree $0$.
\subsection{Plurisubharmonic functions  on   the
Heisenberg group}

Although $\mathscr{H}_{\eta,q}$ is not a subgroup,
by the embedding
\begin{equation}\label{eq:eta}
  \iota_{\eta,q }:\widetilde{\mathscr{H}_{ q}} \rightarrow \mathscr{H}_{\eta,q},\qquad (\lambda,t)\mapsto \eta(q\lambda,t),
\end{equation}
we  say that $u$  is  {\it subharmonic function  on $\mathscr{H}_{\eta,q}$}  if $\iota_{\eta,q }^*u$ is
$\widetilde{\triangle_q}$-subharmonic   on $\widetilde{\mathscr{H}_{ q}}$. Thus, a $[-\infty,\infty)$-valued upper semicontinuous function $u$ on a domain
$\Omega\subset \mathscr{H} $ is  {\it  plurisubharmonic}  if $u$ is $L^1_{\rm loc}(\Omega)$ and $\iota_{\eta,q }^*u$ is $\widetilde{\triangle_q}$-subharmonic on
$\iota_{\eta,q }^*\Omega\cap\widetilde{\mathscr{H}_{ q}}$   for any $ q\in\mathbb{H}^n\setminus \mathfrak{D} $ and $\eta\in
\mathscr{H}^n$. Denote by $PSH(\Omega)$ the class
of all plurisubharmonic functions on $\Omega$.

Recall that the {\it convolution} of two functions $u$ and $v$ over $\mathscr{H}$ is defined as
\begin{equation*}
   u*v(x,t)=\int_{\mathscr{H} } u(y,s) v ((y,s)^{-1}(x,t))dyds.
\end{equation*}Then
\begin{equation}\label{eq:convolution-Y}
  Y( u*v)= u*Y v
\end{equation}
for any left invariant vector field $Y$ by (\ref{eq:left-invariant2}), and
\begin{equation*}
   u*v(x,t)=\int_{\mathscr{H} } u((x,t)(y,s)^{-1}) v(y,s)dyds.
\end{equation*} by taking transformation $ (y,s)^{-1}(x,t)\rightarrow (y ,s ) $ for fixed $(x,t)$, whose Jacobian   can be easily checked  to be identity.
By the non-commutativity
\begin{equation}\label{eq:non-commutativity}
   (x,t)(y,s)^{-1}\neq(y,s)^{-1}(x,t)
\end{equation}
in general, we have $u*v\neq v *u$, and
\begin{equation*}
  \partial  D(\xi, r) = \{\eta \in \mathbb{G} | \|\xi^{-1}\eta\| = r \}\neq \{\eta \in \mathbb{G} | \|\eta\xi^{-1}\| = r \}.
\end{equation*}

Consider the standard regularization given by the convolution  $\chi_\varepsilon* u$   with
\begin{equation}\label{eq:regularization}
   \chi_\varepsilon(\xi) :=\frac 1{\varepsilon^Q} \chi\left(
\delta_{\frac 1\varepsilon }( \xi)\right) ,
\end{equation}
where
$0\leq \chi\in C_0^\infty(D(0,1))$, $\int_{\mathscr{H} }\chi(\xi) dV(\xi)=1$. Then $\chi_\varepsilon* u$ subharmonic if $u$ is (cf. Proposition \ref{prop:PSH-property} (6)),
but we do not know whether  $\chi_\varepsilon* u$ is decreasing as $\varepsilon$ decreasing to $ 0$, which  we could not prove   as in the Euclidean case, because of the non-commutativity.

\begin{rem} (1) It is a consequence of Theorem \ref{thm:meanvalue} that a    function    is in $L^1_{\rm loc}(\Omega)$ if it  is $\triangle_b$-subharmonic on $ \Omega\subset \mathscr{H} $. But since $ {\mathscr{H}_{ q}}$  is different in general for different $ q\in\mathbb{H}^n\setminus \mathfrak{D} $, we do not know wether  a $PSH(\Omega)$ function   is $\triangle_b$-subharmonic on $\Omega$. So we require it as a condition in the definition.

(2) In the characterization of subharmonicity in  Theorem \ref{thm:meanvalue} there is an additional condition   $u(\xi) = lim_{r\rightarrow 0+} M_r^{\mathbb{G}}(u)(\xi)$. We know that  $  M_r^{\mathbb{G}}(u)(\xi)$ is increasing in $r$ if $\triangle_{\mathbb{G}} u \geq 0 $. By
 upper semicontinuity, this condition holds automatically if we know   $\chi_\varepsilon* u$ is decreasing as $\varepsilon$ decreasing to $ 0$.

\end{rem}

The following basic properties of PSH functions also hold on   the
Heisenberg group.

\begin{prop} \label{prop:PSH-property} Assume that $\Omega$ is a bounded domain in $\mathscr{H}$.   Then we
have that

(1) If $ u, v \in PSH(\Omega)$, then $au + bv \in  PSH(\Omega)$, for positive constants $a,b $;

(2) If $ u, v \in PSH(\Omega)$, then $\max\{ u ,v\} \in  PSH(\Omega)$;

(3) If $\{u_\alpha\}$ is a family of  locally uniformly bounded functions in $PSH(\Omega)$, then the
upper semicontinuous regularization
$(\sup_\alpha u_\alpha)^*$
is a PSH function;

(4) If $\{u_n\}$ is a   sequence  of   functions in $PSH(\Omega)$ such that $u_n $ is decreasing to
    $u\in L^1_{\rm loc}(\Omega) $, then  $ u \in PSH(\Omega)$;

(5) If $ u \in PSH(\Omega)$ and $\gamma : \mathbb{R} \rightarrow \mathbb{R }$ is   convex and nondecreasing, then
$\gamma\circ u \in PSH(\Omega)$;

(6) If $ u \in PSH(\Omega)$, then the   regularization $
\chi_\varepsilon* u(\xi)
$
is also PSH on
$\Omega'\subset \mathscr{H} $, where $\Omega'$ is subdomain such that $\Omega'D(0,\varepsilon)\subset \Omega$. Moreover, if $u$ is also
continuous, then $\chi_\varepsilon* u$ converges to $u$ uniformly on any compact subset.

(7) If $\omega\subset\subset \Omega$, $ u \in PSH(\Omega)$, $ v \in PSH(\omega)$,  and $\limsup_{\xi\rightarrow\eta} v(\xi) \leq u(\eta)$ for all $\eta\in \partial\omega$,
then the function defined by
 \begin{equation*}
   \phi=\left\{
   \begin{array}{l}
u, \qquad {\rm on}\quad \Omega \setminus\omega,\\ \max\{u, v\}, \quad {\rm on}\quad  \omega,
\end{array}
\right.
 \end{equation*}
is PSH on $\Omega$.
\end{prop}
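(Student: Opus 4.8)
The plan is to reduce every assertion to the corresponding classical fact for $\widetilde{\triangle_q}$-subharmonic functions on the step-two Carnot group $\widetilde{\mathscr{H}_q}$, transported to the line $\mathscr{H}_{\eta,q}$ through the isomorphism $\iota_{\eta,q}$. Throughout, $L^1_{\rm loc}$-membership and upper semicontinuity are verified directly on $\Omega$, while the defining condition — that $\iota_{\eta,q}^*(\cdot)$ be $\widetilde{\triangle_q}$-subharmonic — is checked line by line using Theorem \ref{thm:meanvalue}. A preliminary observation, used repeatedly, is that the mean-value operator $M_r^{q}$ has total mass one: applying the mean-value identity to the harmonic function $\mathbf 1$ gives $M_r^{q}(\mathbf 1)\equiv 1$, so $M_r^{q}$ integrates against a probability measure. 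A second preliminary is left-translation invariance of $PSH$: since $\iota_{\eta,q}^*(u\circ\tau_\zeta)=\iota_{\zeta\eta,q}^*u$ for every $\zeta\in\mathscr{H}$, the function $u\circ\tau_\zeta$ is again subharmonic on $\mathscr{H}_{\eta,q}$ whenever $u$ is.

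Statements (1), (2), (4) and (5) then follow line by line. For (1) the sub-mean-value inequality $\iota^*_{\eta,q}(au+bv)\le M_r^{q}(\iota^*_{\eta,q}(au+bv))$ is immediate from linearity and $a,b>0$; for (2) one uses monotonicity of $M_r^{q}$, namely $\iota^*_{\eta,q}\max\{u,v\}\le\max\{M_r^q\iota^*u,M_r^q\iota^*v\}\le M_r^q(\iota^*_{\eta,q}\max\{u,v\})$, together with the fact that the maximum of two upper semicontinuous functions is upper semicontinuous. For (4), a decreasing limit of such functions is upper semicontinuous, the distributional inequality $\widetilde{\triangle_q}\iota^*_{\eta,q}u\ge 0$ passes to the limit once $u\in L^1_{\rm loc}$ is assumed, and the mean-value condition survives by monotone convergence. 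For (5) I combine $u(\xi)\le M_r^q u(\xi)$ with the monotonicity of $\gamma$ and Jensen's inequality (valid by the probability-measure remark) to obtain $\gamma(u(\xi))\le\gamma(M_r^q u(\xi))\le M_r^q(\gamma\circ u)(\xi)$, noting that $\gamma\circ u$ is upper semicontinuous and locally bounded.

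For (3) I would first apply Choquet's lemma to extract a countable subfamily whose supremum $w$ has the same upper regularization $w^*=(\sup_\alpha u_\alpha)^*$; replacing $w$ by the increasing limit of finite maxima (each $PSH$ by (2)) and invoking the classical regularization result for subharmonic functions on the line gives that $\iota^*_{\eta,q}w^*$ is $\widetilde{\triangle_q}$-subharmonic, local uniform boundedness supplying $L^1_{\rm loc}$. Statement (7) is the standard gluing argument: away from $\partial\omega$ the function $\phi$ is locally one of two $PSH$ functions, while the hypothesis $\limsup_{\xi\to\eta}v(\xi)\le u(\eta)$ on $\partial\omega$ guarantees upper semicontinuity and that $\phi=\max\{u,v\}$ extends across $\partial\omega$ as a subharmonic function on each line.

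The most delicate point is (6), where the non-commutativity of $\mathscr{H}$ forbids the usual monotonicity argument. Here I would write, using the first form of the convolution, $\chi_\varepsilon*u=\int_{\mathscr H}\chi_\varepsilon(y,s)\,\bigl(u\circ\tau_{(y,s)^{-1}}\bigr)\,dyds$, exhibiting $\chi_\varepsilon*u$ as a nonnegative superposition of the left translates $u\circ\tau_{(y,s)^{-1}}$, each of which is $PSH$ on the relevant subdomain $\Omega'$ by the left-invariance observation above. Applying $\iota^*_{\eta,q}$ and using $\iota^*_{\eta,q}(u\circ\tau_{(y,s)^{-1}})=\iota^*_{(y,s)^{-1}\eta,\,q}u$, Tonelli's theorem propagates the distributional inequality $\widetilde{\triangle_q}\ge 0$ through the integral; since $\chi_\varepsilon$ is smooth, $\chi_\varepsilon*u$ is smooth and the mean-value condition is automatic, so $\chi_\varepsilon*u\in PSH(\Omega')$. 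The uniform convergence $\chi_\varepsilon*u\to u$ on compacta for continuous $u$ is the standard approximate-identity estimate. I expect the genuine obstacle to be exactly this step: one must content oneself with $PSH$-ness of the regularization, since — as emphasized in the text — the expected monotone decrease of $\chi_\varepsilon*u$ as $\varepsilon\downarrow 0$ cannot be established in this non-commutative setting.
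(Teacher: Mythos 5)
Parts (1), (2), (4), (5), (6) and (7) of your proposal are correct and essentially coincide with the paper's own proof: (4) is the monotone-convergence argument, (5) is Jensen's inequality combined with the fact that $\frac{m_q}{r^6}K_{q}\,dV$ is a probability measure on $D_q(0,r)$ (obtained, as you note, from the mean-value formula applied to the harmonic function $\equiv 1$), (6) is exactly the paper's computation (\ref{eq:PSH-regularization}) --- you propagate $\widetilde{\triangle_q}\geq 0$ through the superposition of left translates by Tonelli, while the paper runs the same Fubini argument at the level of the sub-mean-value inequality, an immaterial difference --- and (7) is the paper's gluing argument, down to the inequality $M_r^{q}(\phi)(\eta)\geq M_r^{q}(u)(\eta)\geq u(\eta)=\phi(\eta)$ at boundary points.

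Part (3), however, contains a genuine gap (the paper itself offers no argument here, dismissing (1)--(3) as trivial, so this is the one place where your route must stand on its own). Write $w=\sup_\alpha u_\alpha$. The classical regularization theorem, applied on the line $\widetilde{\mathscr{H}_q}$ to the increasing sequence of finite maxima $\iota^*_{\eta,q}w_k$, yields subharmonicity of the upper semicontinuous regularization of $\iota^*_{\eta,q}w$ computed \emph{intrinsically on the line}, i.e. with $\limsup$ taken only along points of $\mathscr{H}_{\eta,q}$. What the definition of $PSH$ requires is subharmonicity of $\iota^*_{\eta,q}(w^*)$, the restriction to the line of the regularization computed in the ambient topology of $\mathscr{H}$. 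One always has $(\iota^*_{\eta,q}w)^*\leq \iota^*_{\eta,q}(w^*)$, and the inequality can be strict: already in the complex model, for $u_j=\max\{j^{-1}\log|z_2|,-1\}$ on the unit ball of $\mathbb{C}^2$ one has $w^*\equiv 0$, while on the line $\{z_2=0\}$ the function $w\equiv -1$, so its intrinsic regularization is $\equiv -1$. Thus your argument proves the wrong function subharmonic, and you cannot bridge the two by ambient potential theory, since --- as the paper's remark emphasizes --- it is not known that a $PSH$ function is $\triangle_b$-subharmonic on $\mathscr{H}$.

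The repair, which also makes Choquet's lemma unnecessary, is a Fatou-type argument. For every $\alpha$ and every admissible $r$ one has $u_\alpha(\xi)\leq M_r^{q}(u_\alpha)(\xi)\leq M_r^{q}(w^*)(\xi)$, because the kernel is nonnegative, $u_\alpha\leq w^*$, and $w^*$ is measurable; hence $w(\xi)\leq M_r^{q}(w^*)(\xi)$ pointwise. Since $w^*$ is upper semicontinuous and locally bounded above, Fatou's lemma applied to
\begin{equation*}
\xi\longmapsto M_r^{q}(w^*)(\xi)=\frac{m_q}{r^6}\int_{D_q(0,r)}K_{q}(\lambda,t)\,w^*\bigl(\xi(q\lambda,t)\bigr)\,dV(\lambda,t)
\end{equation*}
shows that this function is upper semicontinuous on $\mathscr{H}$. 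Taking $\limsup_{\zeta\to\xi}$ in $w(\zeta)\leq M_r^{q}(w^*)(\zeta)$ gives $w^*(\xi)\leq M_r^{q}(w^*)(\xi)$, i.e. the sub-mean-value inequality (\ref{eq:submeanvalue}) for $w^*$ at the center of every line; translating the center along the line (using that $\iota_q$ is a group isomorphism onto $\mathscr{H}_q$) gives it at every point of every line. Together with upper semicontinuity and local boundedness (hence $L^1_{\rm loc}$), this shows $w^*\in PSH(\Omega)$.
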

\begin{proof} (1)-(3) follows from definition trivially.

(4) It holds since
for any fixed $ q\in\mathbb{H}^n\setminus \mathfrak{D}  $, $\eta\in \Omega$ and small $r>0$,
\begin{equation*}\begin{split}
u(\eta)&=\lim_{n\rightarrow\infty} u_n(\eta)\leq \lim_{n\rightarrow\infty}  \frac {m_q}{r^6}\int_{  D_q(0,r)} K_{ q} (\lambda,t)
    \iota_{\eta,q}^* u_n    ( \lambda ,t)   d V(\lambda,t)\\ &
  = \frac {m_q}{r^6}\int_{  D_q(0,r)} K_{ q} (\lambda,t)
    \iota_{\eta,q}^* u     ( \lambda ,t)   d V(\lambda,t)=M_r^{ q}(   u)(\eta)
\end{split}\end{equation*}
by the monotone convergence theorem.

(5) It holds since
\begin{equation*}\begin{split}
  M_r^{ q}( \gamma\circ u)(\eta)&=\frac {m_q}{r^6}\int_{  D_q(0,r)} K_{ q} (\lambda,t)
  \gamma\left( \iota_{\eta,q}^* u    ( \lambda ,t)\right)  d V(\lambda,t)\\
  &\geq \gamma\left(\frac {m_q}{r^6}\int_{  D_q(0,r)} K_{ q} (\lambda,t)
  \iota_{\eta,q}^* u    ( \lambda ,t)  d V(\lambda,t)\right)
  \geq \gamma\left(
  \iota_{\eta,q}^* u   ( 0 ,0)   \right)=( \gamma\circ u)(\eta)
\end{split}\end{equation*}
by Jensen's inequality for nondecreasing convex function $\gamma$, since  $\frac {m_q}{r^6} K_{ q} (\lambda,t)
    $ is nonnegative and its integral over $  D_q(0,r)$ is $1$. The latter fact follows from the mean value formula for the harmonic function $\equiv 1$.

(6)
For fixed $ q\in\mathbb{H}^n\setminus \mathfrak{D}  $  and $\eta\in \Omega $,
$\chi_\varepsilon *  u$ is PSH since it is smooth and
\begin{equation}\label{eq:PSH-regularization}
\begin{split}
   M_r^{ q}(\chi_\varepsilon* u)(\eta)&= \frac {m_q}{r^6}\int_{  D_q(0,r)} K_{ q} (\lambda,t) \iota_{\eta,q}^* (\chi_\varepsilon* u)(\lambda,t)  d V(\lambda,t)
   \\&= \frac {m_q}{r^6}\int_{  D_q(0,r)} K_{ q} (\lambda,t)   d V(\lambda,t)  \int_{\mathscr{H} }\chi_\varepsilon
   (y,s)  u\left((y,s)^{-1}\eta(q\lambda ,t)\right)dyds\\&
   =\int_{\mathscr{H} }\chi_\varepsilon (y,s)   M_r^{ q} \left(\iota_{(y,s)^{-1}\eta,q}^* u\right)  (0 )dyds\\&
   \geq \int_{\mathscr{H} }  \chi_\varepsilon (y,s) u\left((y,s)^{-1}\eta\right)dyds=\chi_\varepsilon* u (\eta),
\end{split}\end{equation}
by  Fubini's theorem and  subharmonicity of $u$ on the open subset  $\Omega\cap \mathscr{H}_{(y,s)^{-1}\eta,q}$. The uniform convergence is trivial.

(7) $\phi$ is obviously in $L^1_{\rm loc}(\Omega)$, and is PSH on $\mathring{\omega}$ by (2).
For   $\eta\in \partial\omega$,
\begin{equation*}
   M_r^{ q}(\phi)(\eta)\geq   M_r^{ q}(u)(\eta)\geq  u(\eta)= \phi(\eta)
\end{equation*} for small $ r>0$.
\end{proof}

\begin{rem} Our notion of plurisubharmonic functions   is different from that  introduced by Harvey and Lawson  \cite{harvey3} for   calibrated geometries, i.e.
an upper
semicontinuous function $u$ satisfies $\triangle u\geq 0$ on each calibrated submanifold in $\mathbb{R}^N$, where $\triangle$ is the Laplacian associated to the induced
Riemannian metric on  the calibrated submanifold. In our definition we require $\triangle_q u\geq 0$ for SubLaplacian $\triangle_q $ on each $5$-dimensional real hyperplane $ {\mathscr{H}_{\eta, q}}$ for $ q\in\mathbb{H}^n\setminus \mathfrak{D} $.
\end{rem}

\section{Differential operators $d_0$, $d_1$, $\triangle$  and the quaternionic Monge-Amp\`{e}re operator on the
Heisenberg group}
\subsection{Differential operators $d_0$ and  $d_1$ }
 Denote
$
    \overline{W}_j:=X_{2j-1}+\mathbf{i}X_{2j}$, $ {W}_j:=X_{2j-1}-\mathbf{i}X_{2j},
$ $ j=1,\ldots 2n.$ Then
 \begin{equation*}
    [{W}_j,\overline W_k]= 8\delta_{jk}\mathbf{i}\partial_t
 \end{equation*}
 and all other brackets vanish by (\ref{eq:vector-Y}).
  Let $\{ \ldots, \theta^{\overline{j}}, \theta^{ {j}},\ldots,\theta \}$  be the  basis dual to $\{\ldots,\overline{W}_j, {W}_j$, $\ldots, \partial_t\}$. The {\it  tangential Cauchy-Riemann operator} is defined as
$\overline{\partial}_bu=\sum \overline{W}_ju\,\theta^{\overline{j}}$. Then
\begin{equation*}\begin{split}
   \partial_b\overline{\partial}_bu&=\sum_{j,k=1}^{2n} W_k\overline{W}_ju\,\theta^k\wedge \theta^{\overline{j}},
\\
   \overline\partial_b{\partial}_bu&=\sum_{j,k=1}^{2n} \overline W_j{W}_k u\,\theta^{\overline{j}}\wedge \theta^{k}=-\sum_{j,k=1}^{2n} W_k\overline{W}_ju\theta^k\wedge\theta^{\overline{j}}+8\mathbf{i}\partial_{t }u\sum_{ k=1}^{2n} \theta^k\wedge\theta^{\overline{k}}.
\end{split}\end{equation*}
Thus $\partial_b\overline{\partial}_b\neq -\overline\partial_b{\partial}_b$.

By the definition of the operator  $\triangle$ in (\ref{eq:triangle0}), we have \begin{equation}\label{2.235}\triangle
F=\frac{1}{2}\sum_{A,B,I}(Z_{A0 '}Z_{ B 1' }-Z_{ B0' }Z_{A1' })f_{I}~\omega^A\wedge\omega^B\wedge\omega^I,
\end{equation} for $F= \sum_{ I} f_{I}~ \omega^I$.
Now for a function $u\in C^2$ we
define\begin{equation}\label{eq:triangle}\triangle_{AB}u:=\frac{1}{2}(Z_{A0' }Z_{B1' }u-Z_{B0' }Z_{A1' }u).\end{equation}
$2\triangle_{AB}$ is the determinant of $(2\times2)$-submatrix of $A$th and $ B$th rows in (\ref{eq:nabla-jj'-new}). Note that $Z_{B0'
}Z_{A1' }u$ in the above definition could not be replaced by $Z_{A1' }Z_{B0' }u$ in general  because of noncommutativity.
  Then we can write \begin{equation}\label{2.1000}\triangle u=\sum_{A,B=0}^{2n-1}\triangle_{A B}u~\omega^A\wedge
\omega^B.\end{equation}When
$u_1=\ldots=u_n=u$, $\triangle u_1\wedge\ldots\wedge\triangle u_n$
coincides with $(\triangle u)^n:=\wedge^n\triangle u$.

The following nice behavior of brackets plays a key role in the proof of properties of $d_0, d_1$.

\begin{prop}\label{prop:brackets-Z} (1) For fixed $A'=0'$ or $1'$, we have $      [Z_{AA '},Z_{BA '}]=0$ for any $A,B=0,\ldots 2n-1$, i.e. each column $\{Z_{0A '},\cdots, Z_{(2n-1)A '}\}$ in (\ref{eq:nabla-jj'-new}) spans  an abelian subalgebra.

(2) If $|A-B|\neq 0,n$, we have
\begin{equation}\label{eq:vanish}
    [Z_{A0 '},Z_{B1 '}]=0,
\end{equation}
and
   \begin{equation}\label{eq:brackets-Z101}[ Z_{l 0'}, Z_{ (n+l ) 1' } ] =
         [ Z_{(n+l )0'}, Z_{ l 1' } ] =   - 8   \mathbf{i } \partial_{t },
  \end{equation} for $l=0,,\ldots  n-1$, and
   \begin{equation}\label{eq:brackets-Z'}\begin{split} &2\triangle_{l(n+l)}=X_{4l+1}^2+ X_{4l+2}^2+X_{4l+3}^2+ X_{4l+4}^2.
   \end{split} \end{equation}
\end{prop}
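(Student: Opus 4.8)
The plan is to reduce the whole statement to the elementary commutation relations $[W_j,\overline{W}_k]=8\delta_{jk}\mathbf{i}\partial_t$, together with $[W_j,W_k]=[\overline{W}_j,\overline{W}_k]=0$, recorded at the beginning of Subsection 3.1. First I would rewrite the entries of the matrix (\ref{eq:nabla-jj'-new}) in terms of $W_j$ and $\overline{W}_j$. Matching the indices $2j-1=4l+1,\,4l+3$ and $2j=4l+2,\,4l+4$ in the two halves of the matrix, one reads off, for $l=0,\ldots,n-1$,
\begin{equation*}
   Z_{l0'}=\overline{W}_{2l+1},\qquad Z_{l1'}=-\overline{W}_{2l+2},\qquad Z_{(n+l)0'}=W_{2l+2},\qquad Z_{(n+l)1'}=W_{2l+1}.
\end{equation*}
The key structural fact to extract is a \emph{parity bookkeeping}: column $0'$ consists of the $\overline{W}$'s with odd subscripts together with the $W$'s with even subscripts, while column $1'$ consists of the $\overline{W}$'s with even subscripts together with the $W$'s with odd subscripts.

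For part (1) I would note that, in either column, the $\overline{W}$'s and the $W$'s present carry subscripts of opposite parity, so no column contains both $W_j$ and $\overline{W}_j$ for a common $j$. Since brackets of two $\overline{W}$'s or of two $W$'s vanish, and $[W_a,\overline{W}_b]$ is a nonzero multiple of $\mathbf{i}\partial_t$ only when $a=b$, every bracket of two entries in a fixed column is zero; hence each column spans an abelian subalgebra.

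For (\ref{eq:vanish}) and (\ref{eq:brackets-Z101}) I would split into four cases according to whether $A$ and $B$ lie in the first block $\{0,\ldots,n-1\}$ or the second block $\{n,\ldots,2n-1\}$. If $A,B$ lie in the same block, $[Z_{A0'},Z_{B1'}]$ pairs two $\overline{W}$'s or two $W$'s and vanishes. If they lie in different blocks, it pairs one $\overline{W}$ against one $W$, and by $[W_a,\overline{W}_b]=8\delta_{ab}\mathbf{i}\partial_t$ it is nonzero exactly when the two subscripts agree. A one-line check with the dictionary shows the subscripts agree precisely when $\{A,B\}=\{l,n+l\}$, equivalently $|A-B|=n$; this gives the vanishing (\ref{eq:vanish}) whenever $|A-B|\neq0,n$, and, reading the sign off the dictionary, the common value $[Z_{l0'},Z_{(n+l)1'}]=[Z_{(n+l)0'},Z_{l1'}]=-8\mathbf{i}\partial_t$.

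Finally, for (\ref{eq:brackets-Z'}) I would use the definition (\ref{eq:triangle}) and the dictionary to write
\begin{equation*}
   2\triangle_{l(n+l)}=Z_{l0'}Z_{(n+l)1'}-Z_{(n+l)0'}Z_{l1'}=\overline{W}_{2l+1}W_{2l+1}+W_{2l+2}\overline{W}_{2l+2},
\end{equation*}
and then expand each product, using that $\mathbf{i}$ commutes with the real fields $X_a$ and that $[X_{2j-1},X_{2j}]=4\partial_t$, to obtain $\overline{W}_{2l+1}W_{2l+1}=X_{4l+1}^2+X_{4l+2}^2-4\mathbf{i}\partial_t$ and $W_{2l+2}\overline{W}_{2l+2}=X_{4l+3}^2+X_{4l+4}^2+4\mathbf{i}\partial_t$. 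The one genuinely substantive point --- and the reason the statement is clean --- is that reversing the order of a factor $\overline{W}W$ versus $W\overline{W}$ produces first-order terms $\mp4\mathbf{i}\partial_t$ of opposite sign, so that these cancel and leave the purely real second-order operator $X_{4l+1}^2+X_{4l+2}^2+X_{4l+3}^2+X_{4l+4}^2$. This cancellation is precisely the quaternionic phenomenon underlying $d_0d_1=-d_1d_0$; everything else is bookkeeping of indices and signs, which is where the only real care is needed.
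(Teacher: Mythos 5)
Your proof is correct and follows essentially the same route as the paper: both rest on the bracket relations (\ref{eq:bracket-T}) together with the explicit block form of (\ref{eq:nabla-jj'-new}), your $W_j,\overline{W}_j$ dictionary being just a relabeling of the paper's observation that entries whose underlying $X$-pairs differ must commute. In particular, your computation of $2\triangle_{l(n+l)}$ via the cancellation of the $\mp 4\mathbf{i}\partial_t$ terms is exactly the paper's expansion $(X_{4l+1}+\mathbf{i}X_{4l+2})(X_{4l+1}-\mathbf{i}X_{4l+2})+(X_{4l+3}-\mathbf{i}X_{4l+4})(X_{4l+3}+\mathbf{i}X_{4l+4})$ with the commutator terms cancelling by (\ref{eq:bracket-T}).
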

\begin{proof} Noting that by (\ref{eq:nabla-jj'-new}),  $Z_{AA'}$ and $Z_{BB'}$ for $|A-B|\neq 0$ or $n$ are
linear combinations of $X_{2l+j}$'s, $j=1,2$, with different $l$, and so their bracket vanishes by (\ref{eq:bracket-T}). Thus (1) and (\ref{eq:vanish}) hold.   (\ref{eq:brackets-Z101}) follows from brackets in (\ref{eq:bracket-T}) and the expression of $Z_{AA'}$'s in (\ref{eq:nabla-jj'-new}).
(\ref{eq:brackets-Z'}) holds by
\begin{equation*}\begin{split}
   2\triangle_{l(n+l)}&=(X_{4l+1} + \mathbf{i}X_{4l+2})(X_{4l+1} -\mathbf{i}X_{4l+2})+(X_{4l+3}- \mathbf{i}X_{4l+4})( X_{4l+3} + \mathbf{i}X_{4l+4})\\& =X_{4l+1}^2+ X_{4l+2}^2+X_{4l+3}^2+ X_{4l+4}^2-\mathbf{i}[X_{4l+1} ,X_{4l+2}]+\mathbf{i}[X_{4l+3} ,X_{4l+4}]
\end{split} \end{equation*}
and using (\ref{eq:bracket-T}).
 \end{proof}

{\it Proof of Proposition \ref{prop:d2}}. (1) For any $F=\sum_If_I\omega^I$,
note that   we have $Z_{A0'} Z_{B0'}f_I=Z_{B0'}Z_{A0'}f_I $  by Proposition \ref{prop:brackets-Z} (1). So we have
\begin{equation*}d_0^2F=\sum_{ I}\sum_{A,B=0}^{2n-1}Z_{A0'}Z_{B0'}f_I~\omega^A\wedge\omega^B\wedge
\omega^I=0,\end{equation*} by  $\omega^A\wedge\omega^B =-\omega^B\wedge\omega^A.$ It is similar for $d_1^2 =0$.

(2) For any $F=\sum_If_I\omega^I$, we have
\begin{equation*}\begin{split}
d_0d_1F&=\sum_{ I}\sum_{A , B }Z_{A0' }Z_{B1' }f_{I} \omega^A\wedge\omega^B\wedge\omega^I=\quad\sum_I\sum_{|A-B|\neq 0, n}  Z_{A0'
}Z_{B1' }f_{I} \omega^A\wedge\omega^B\wedge\omega^I\\&\qquad\qquad\qquad\qquad\qquad+\sum_{ I}\sum_{l=0}^{n-1}\left(Z_{ l 0'
}Z_{(n+l )1'}-Z_{(n+l )0'} Z_{l1' }\right)  f_{I} \omega^{ l}\wedge\omega^{ n+l }\wedge\omega^I
\\&=-\sum_I\sum_{|A-B|\neq 0,n} Z_{B1' }Z_{A0' }f_{I}\omega^B\wedge \omega^A\wedge\omega^I\\&\quad-\sum_{ I}\sum_{l=0}^{n-1}(Z_{ l 1' }Z_{(n+l )0'}-Z_{(n+l )1'} Z_{l0' })  f_{I} \omega^{ l}\wedge\omega^{ n+l
}\wedge\omega^I
\\& =-\sum_{A,B,I}Z_{A1'}Z_{B 0' }f_{I} \omega^A\wedge\omega^B\wedge\omega^I =-d_1d_0F,
 \end{split} \end{equation*}
 by using commutators    (\ref{eq:vanish})-(\ref{eq:brackets-Z101})  in Proposition \ref{prop:brackets-Z}   in the third identity.

(3) Write $G=\sum_Jg_J\omega^J$. We have
\begin{equation*}\begin{aligned}d_\alpha(F\wedge G)
=&\sum_{A,I,J}[Z_{A\alpha'}(f_I)g_J+f_IZ_{A\alpha'}(g_J)]~\omega^A\wedge\omega^I\wedge\omega^J\\
=&\sum_{A,I }Z_{A\alpha'}(f_I)~\omega^A\wedge\omega^I\wedge \sum_{ J} g_J\omega^J+(-1)^{p}\sum_{A,I }f_I\omega^I\wedge\sum_{ J}Z_{A\alpha'
}(g_J)\omega^A\wedge\omega^J\\
=&d_\alpha F\wedge G+(-1)^{p}F\wedge d_\alpha G.
\end{aligned}\end{equation*} by $\omega^A\wedge\omega^I =(-1)^{p}\omega^I\wedge\omega^A $.\hskip 113mm $\Box$

\begin{cor}\label{p2.33}For $u_1,\ldots,
u_n\in C^2$, $\triangle
u_1\wedge\ldots\wedge\triangle u_k$ is closed, $k=1,\ldots,n,$.\end{cor}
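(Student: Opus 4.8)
The plan is to first show that each single factor $\triangle u_i$ is closed, and then to propagate this property through the exterior product by means of the graded Leibniz rule. Both inputs are furnished entirely by Proposition \ref{prop:d2}, so the argument is purely formal once those identities are in hand.

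For the first step, recall from the definition (\ref{eq:triangle0}) that $\triangle u_i = d_0 d_1 u_i$. Applying $d_0$ and invoking $d_0^2 = 0$ from Proposition \ref{prop:d2}(1) gives $d_0 \triangle u_i = d_0^2 (d_1 u_i) = 0$. Applying $d_1$ instead and using the anticommutation $d_1 d_0 = -d_0 d_1$ from Proposition \ref{prop:d2}(2) together with $d_1^2 = 0$ gives
\[
  d_1 \triangle u_i = d_1 d_0 (d_1 u_i) = -d_0 d_1 (d_1 u_i) = -d_0 (d_1^2 u_i) = 0.
\]
Hence each $\triangle u_i$ satisfies $d_0 \triangle u_i = d_1 \triangle u_i = 0$, i.e. it is closed.

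For the second step I would argue by induction on $k$. The base case $k=1$ is exactly the first step. For the inductive step, set $F = \triangle u_1 \wedge \cdots \wedge \triangle u_{k-1}$, which is a form of even degree $2(k-1)$, and $G = \triangle u_k$. The Leibniz rule of Proposition \ref{prop:d2}(3) yields, for $\alpha = 0,1$,
\[
  d_\alpha (F \wedge G) = d_\alpha F \wedge G + (-1)^{2(k-1)} F \wedge d_\alpha G.
\]
The first term vanishes by the induction hypothesis $d_\alpha F = 0$, and the second vanishes by the first step $d_\alpha G = 0$; the sign $(-1)^{2(k-1)} = 1$ plays no role. Thus $\triangle u_1 \wedge \cdots \wedge \triangle u_k$ is closed, completing the induction.

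There is no genuine obstacle here: the whole statement is a formal consequence of the relations $d_0^2 = d_1^2 = 0$ and $d_0 d_1 = -d_1 d_0$ combined with the graded Leibniz rule. The only point meriting any care is bookkeeping of degrees in the Leibniz rule, but since every $\triangle u_i$ has even degree $2$, all intermediate signs are trivial and the argument reduces to the observation that a wedge of closed forms is closed.
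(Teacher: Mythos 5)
Your proof is correct and follows essentially the same route as the paper: both rest on the Leibniz rule of Proposition \ref{prop:d2}(3) (you via induction on $k$, the paper via the equivalent multi-factor expansion, with all signs trivial since each $\triangle u_j$ has degree $2$) together with the identities $d_0\triangle u_i = d_0^2 d_1 u_i = 0$ and $d_1\triangle u_i = -d_1^2 d_0 u_i$ (equivalently your $-d_0 d_1^2 u_i$) $= 0$ from Proposition \ref{prop:d2}(1)--(2).
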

\begin{proof}By Proposition \ref{prop:d2} (3), we have \begin{equation*}d_\alpha(\triangle u_1\wedge\ldots\wedge\triangle
u_k)=\sum_{j=1}^k\triangle u_1\wedge\ldots\wedge d_\alpha(\triangle
u_j)\wedge\ldots\wedge\triangle u_k,
\end{equation*}for $\alpha=0,1$. Note that $d_0\triangle=d_0^2d_1=0$ and $
d_1\triangle=-d_1^2d_0=0 $ by using Proposition \ref{prop:d2} (1)-(2). It follows that $d_\alpha(\triangle
u_1\wedge\ldots\wedge\triangle
u_k)=0$.
\end{proof}

{\it Proof of Proposition \ref{prop:d-delta}}.
 It follows from Corollary \ref{p2.33} that\begin{equation*}d_0(\triangle
u_2\wedge\ldots\wedge\triangle u_n)=d_1(\triangle
u_2\wedge\ldots\wedge\triangle u_n)=0.
\end{equation*}By Proposition \ref{prop:d2} (3), \begin{equation*}\begin{aligned}d_\alpha(u_1\triangle
u_2\wedge\ldots\wedge\triangle u_n)&=d_\alpha u_1\wedge\triangle
u_2\wedge\ldots\wedge\triangle u_n+u_1d_\alpha(\triangle
u_2\wedge\ldots\wedge\triangle u_n)\\&=d_\alpha u_1\wedge\triangle
u_2\wedge\ldots\wedge\triangle u_n.
\end{aligned}\end{equation*}So we have
\begin{equation*}\begin{aligned}\triangle (u_1
\triangle u_2\wedge\ldots\wedge\triangle u_n)=&d_0d_1(u_1\triangle
u_2\wedge\ldots\wedge\triangle u_n)=d_0(d_1u_1\wedge\triangle
u_2\wedge\ldots\wedge\triangle u_n)\\=&d_0d_1u_1\wedge\triangle
u_2\wedge\ldots\wedge\triangle u_n-d_1u_1\wedge d_0(\triangle
u_2\wedge\ldots\wedge\triangle u_n)\\=&\triangle u_1\wedge \triangle
u_2\wedge\ldots\wedge\triangle u_n.
\end{aligned}\end{equation*}

\subsection{The quaternionic Monge-Amp\`{e}re operator   on the   Heisenberg groups}A quaternionic $(n\times n)$-matrix
$(\mathcal{M}_{jk})$ is called {\it hyperhermitian} if  ${\mathcal{M}}_{jk}=\overline{\mathcal{M}_{kj}}$.
\begin{prop} \label{prop:diagonal} {\rm (Claim 1.1.4, 1.1.7  in \cite{alesker1})} For a hyperhermitian $(n\times n)$-matrix
$\mathcal{M}$, there exists a unitary matrix $\mathcal{U}$ such that $\mathcal{U}^*\mathcal{M}\mathcal{U}$ is diagonal and real.
\end{prop}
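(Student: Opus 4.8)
The plan is to reduce to the ordinary complex spectral theorem via the complexification supplied by the embedding $\mathbb{H}\hookrightarrow\mathbb{C}^{2\times2}$ recalled above. Applying that embedding entrywise gives an injective ring homomorphism $\psi$ from quaternionic $n\times n$ matrices into $\mathbb{C}^{2n\times2n}$. Its essential feature is that the $2\times2$ block assigned to $\overline{q}$ is the conjugate transpose of the block assigned to $q$; hence $\psi$ intertwines the quaternionic conjugate transpose with the complex conjugate transpose, and the hyperhermitian condition $\mathcal{M}^*=\mathcal{M}$ translates into the statement that $\widetilde{\mathcal{M}}:=\psi(\mathcal{M})$ is a genuine Hermitian matrix in $\mathbb{C}^{2n\times2n}$.

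Next I would record how the image of $\psi$ sits inside $\mathbb{C}^{2n\times2n}$. Let $J$ be the block-diagonal matrix with $n$ diagonal blocks equal to $\epsilon=\left(\begin{smallmatrix}0&-1\\1&0\end{smallmatrix}\right)$, so that $\epsilon^2=-I$ and $J$ is real. A direct computation on a single $2\times2$ block shows that every $A$ in the image of $\psi$ satisfies $\overline{A}J=JA$; equivalently $A$ commutes with the antilinear map $S\colon v\mapsto J\overline{v}$ on $\mathbb{C}^{2n}$, which satisfies $S^2=-I$ and is anti-unitary for the standard Hermitian inner product. In particular $\widetilde{\mathcal{M}}$ commutes with $S$.

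I would then invoke the spectral theorem over $\mathbb{C}$: the Hermitian matrix $\widetilde{\mathcal{M}}$ has only real eigenvalues and an orthonormal eigenbasis of $\mathbb{C}^{2n}$. Because $\widetilde{\mathcal{M}}$ commutes with $S$, each eigenspace is $S$-invariant, and since $S$ is anti-unitary with $S^2=-I$ one checks that $w$ is orthogonal to $Sw$ for every nonzero $w$. Consequently every eigenvalue occurs with even complex multiplicity and each eigenspace admits an orthonormal basis consisting of $S$-conjugate pairs $\{w,Sw\}$. Under $\psi^{-1}$ each such pair corresponds to a single quaternionic unit vector spanning a quaternionic line, and assembling these vectors into the columns of a quaternionic matrix $\mathcal{U}$ produces a quaternionic unitary with $\mathcal{U}^*\mathcal{M}\mathcal{U}$ diagonal, its diagonal entries being the real eigenvalues of $\widetilde{\mathcal{M}}$.

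The routine part is the complex spectral theorem; the quaternionic content, and the step I expect to require the most care, is the organization of the complex eigenbasis into $S$-conjugate orthonormal pairs and the verification that each pair $\{w,Sw\}$ descends under $\psi^{-1}$ to a genuine quaternionic unit vector, so that the assembled $\mathcal{U}$ is quaternionic unitary rather than merely complex unitary. One must also keep track of the left/right multiplication conventions, noting that the resulting eigenvalues are real and hence central, so that $\mathcal{M}v=v\lambda$ and $\mathcal{M}v=\lambda v$ coincide and the final matrix identity is unambiguous. Alternatively one could argue directly by induction on $n$, using that the quadratic form $v\mapsto\sum_{j,k}\overline{v_j}\,\mathcal{M}_{jk}v_k$ is real-valued to extract a real eigenvalue and then passing to the hyperhermitian restriction on its orthogonal complement; but the complexification route seems more transparent here and fits the notation already set up in the paper.
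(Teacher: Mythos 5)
Your proposal is correct, but it cannot be compared against a proof in the paper for a simple reason: the paper does not prove this proposition at all. It is imported verbatim from Alesker \cite{alesker1} (Claims 1.1.4 and 1.1.7 there), so the paper's entire ``proof'' is the citation. Your complexification argument is a complete and standard proof of the quaternionic spectral theorem, and the points you flag as delicate do all check out: the entrywise embedding $\psi$ intertwines the quaternionic and complex conjugate-transposes; the image of $\psi$ is exactly the commutant of the antilinear map $S$ with $S^{2}=-I$; the identity $\langle w,Sw\rangle=0$ follows from anti-unitarity of $S$ together with $S^{2}=-I$; eigenspaces of the Hermitian matrix $\widetilde{\mathcal{M}}$ are $S$-invariant because its eigenvalues are real; and a complex unitary matrix whose columns are arranged in pairs $(w_{j},Sw_{j})$ matching the block structure automatically commutes with $S$, hence lies in $\psi\bigl(M_{\mathbb{H}}(n,n)\bigr)$ and descends, by injectivity of $\psi$, to a quaternionic unitary $\mathcal{U}$ with $\mathcal{U}^{*}\mathcal{M}\mathcal{U}$ real diagonal. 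It is worth noting that your $\psi$ and $S$ are, up to a permutation of coordinates (interleaved $2\times2$ blocks versus the big-block form), precisely the paper's own $\tau$ in (\ref{eq:tau-A}) and $\rho(\textbf{j})$ in (\ref{eq:rouj}), so your argument meshes with the machinery the paper already sets up and could serve as a self-contained substitute for the citation. The alternative you sketch at the end --- extracting a real eigenvalue from the real-valued quadratic form $v\mapsto\sum_{j,k}\overline{v_{j}}\,\mathcal{M}_{jk}v_{k}$ and inducting on the orthogonal complement --- is the other standard route and is closer in spirit to the classical treatments underlying the cited claims; it avoids the complexification bookkeeping at the cost of redoing the variational existence-of-eigenvector step by hand.
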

\begin{prop} \label{prop:Moore} {\rm (Theorem 1.1.9 in \cite{alesker1})} (1) The Moore determinant of  any complex hermitian  matrix
considered as a quaternionic hyperhermitian matrix  is equal to its usual determinant.

(2) For any quaternionic hyperhermitian $(n\times n)$-matrix $\mathcal{M}$ and any quaternionic $(n\times n)$-matrix $\mathcal{C}$
\begin{equation*}
     \det(\mathcal{C}^*\mathcal{M}\mathcal{C})=\det(\mathcal{ A} )\det(\mathcal{C}^* \mathcal{C}).
\end{equation*}
\end{prop}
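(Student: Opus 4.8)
The plan is to transfer the problem to the complex realization already used in (\ref{eq:nabla-jj'-new}). Writing $\tau\colon\mathbb{H}\to\mathbb{C}^{2\times2}$ for the embedding displayed just before (\ref{eq:nabla-jj'-new}) and $\tau_n\colon M_n(\mathbb{H})\to M_{2n}(\mathbb{C})$ for the induced entrywise map, I would first record that $\tau_n$ is an injective ring homomorphism with $\tau_n(\mathcal{M}^*)=\tau_n(\mathcal{M})^*$, so that $\mathcal{M}$ is hyperhermitian exactly when $\tau_n(\mathcal{M})$ is Hermitian, and that ${\det}_{\mathbb{C}}$ is multiplicative on the image. Because $\tau_n(\mathcal{M})$ commutes with the standard quaternionic structure on $\mathbb{C}^{2n}$, for Hermitian $\tau_n(\mathcal{M})$ every eigenvalue occurs with even multiplicity and ${\det}_{\mathbb{C}}\tau_n(\mathcal{M})\ge0$. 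The engine of the whole proof is the squaring identity
\begin{equation*}
  {\det}_{\mathbb{C}}\,\tau_n(\mathcal{M})=(\det\mathcal{M})^2,
\end{equation*}
valid for hyperhermitian $\mathcal{M}$, where $\det$ is the Moore determinant; its real content is that the polynomial $\mathcal{M}\mapsto{\det}_{\mathbb{C}}\tau_n(\mathcal{M})$ on the space of hyperhermitian matrices is the square of a real polynomial normalized to be $1$ at the identity, and this polynomial square root is, in one standard construction, the Moore determinant.

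Granting the squaring identity, part (1) is short. If $H$ is complex Hermitian then $\tau(a)=\mathrm{diag}(a,\bar a)$ for $a\in\mathbb{C}$, so a simultaneous permutation of rows and columns turns $\tau_n(H)$ into the block matrix $\mathrm{diag}(H,\overline H)$; hence ${\det}_{\mathbb{C}}\tau_n(H)=\det H\cdot\det\overline H=(\det H)^2$, because $\det H$ is real. Comparing with the squaring identity gives $\det_{\text{Moore}}H=\pm\det H$. I would fix the sign by observing that both $\det_{\text{Moore}}H$ and $\det H$ are polynomials in the entries of $H$ agreeing up to sign, and that on the connected open set of positive definite Hermitian matrices the square root equal to $1$ at $H=I$ must be $\det H>0$; since the two polynomials then agree on a nonempty open set, they agree identically.

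For part (2), note first that $\mathcal{C}^*\mathcal{M}\mathcal{C}$ and $\mathcal{C}^*\mathcal{C}$ are hyperhermitian, so the Moore determinants make sense. Applying the squaring identity and multiplicativity of ${\det}_{\mathbb{C}}$,
\begin{equation*}
\begin{aligned}
  \bigl(\det(\mathcal{C}^*\mathcal{M}\mathcal{C})\bigr)^2
  &={\det}_{\mathbb{C}}\bigl(\tau_n(\mathcal{C})^*\tau_n(\mathcal{M})\tau_n(\mathcal{C})\bigr)
  =\bigl|{\det}_{\mathbb{C}}\tau_n(\mathcal{C})\bigr|^2\,{\det}_{\mathbb{C}}\tau_n(\mathcal{M})\\
  &=\bigl(\det(\mathcal{C}^*\mathcal{C})\bigr)^2(\det\mathcal{M})^2,
\end{aligned}
\end{equation*}
so $\det(\mathcal{C}^*\mathcal{M}\mathcal{C})=\pm\det\mathcal{M}\cdot\det(\mathcal{C}^*\mathcal{C})$. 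To remove the ambiguity I would fix a nonsingular $\mathcal{M}$ and restrict to invertible $\mathcal{C}$; then $\det(\mathcal{C}^*\mathcal{C})>0$ and $\det(\mathcal{C}^*\mathcal{M}\mathcal{C})\neq0$, so the ratio is a continuous $\{\pm1\}$-valued function on the connected group $GL_n(\mathbb{H})$ equal to $+1$ at $\mathcal{C}=I$, hence identically $+1$. The identity then extends to all $\mathcal{C}$, and then to all hyperhermitian $\mathcal{M}$, since both sides are polynomials agreeing on a dense set.

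The main obstacle is the squaring identity itself: equivalently, the statement that ${\det}_{\mathbb{C}}\tau_n$ admits a polynomial — not merely pointwise — square root on hyperhermitian matrices. This is precisely the nontrivial algebraic fact underlying the existence of the Moore determinant, and it is what ultimately yields its invariance under unitary congruence; once it is in hand, the rest is bookkeeping with ${\det}_{\mathbb{C}}$ together with connectedness arguments to pin down signs. If one prefers to avoid the polynomial-square-root statement, one can instead lean on the diagonalization Proposition~\ref{prop:diagonal}: writing $\mathcal{M}=\mathcal{U}^*D\mathcal{U}$ with $\mathcal{U}$ unitary and $D$ real diagonal reduces the squaring identity to the trivial diagonal case ${\det}_{\mathbb{C}}\tau_n(D)=\prod_i d_i^2$, at the cost of first establishing unitary invariance of the Moore determinant directly from its definition.
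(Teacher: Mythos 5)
First, a point of comparison: the paper contains no proof of this proposition at all — it is imported verbatim as Theorem 1.1.9 of \cite{alesker1} (with a typo, $\mathcal{A}$ for $\mathcal{M}$, in part (2)) and used as known quaternionic linear algebra. So there is no argument in the paper for yours to parallel; the proposal has to stand on its own.

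On its own terms it has a genuine gap, which you yourself flag: the entire argument is routed through the squaring identity $\det_{\mathbb{C}}\tau_n(\mathcal{M})=(\det\mathcal{M})^2$ for hyperhermitian $\mathcal{M}$, and this identity is nowhere proved. It is not a peripheral lemma. Starting from the definition of the Moore determinant that \cite{alesker1} actually uses (the cycle-ordered permutation expansion), the squaring identity is a theorem of essentially the same depth as the proposition itself; saying that $\det_{\mathbb{C}}\circ\,\tau_n$ \emph{is} the square of a real polynomial whose normalized square root \emph{is} the Moore determinant either quietly redefines the Moore determinant (in which case you owe both the existence/uniqueness of such a polynomial square root and the proof that it coincides with Moore's definition), or simply defers the whole content. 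Your fallback route makes the difficulty concrete: reducing via Proposition \ref{prop:diagonal} to the diagonal case requires $\det(\mathcal{U}^*\mathcal{M}\mathcal{U})=\det\mathcal{M}$ for $\mathcal{U}\in\text{U}_{\mathbb{H}}(n)$, which is exactly the special case $\mathcal{C}=\mathcal{U}$ of the part (2) being proved, so it would have to be established combinatorially from Moore's definition — and that is precisely the step that is missing. By contrast, what you do carry out is correct and clean: the identification $\tau(a)=\mathrm{diag}(a,\bar a)$ for $a\in\mathbb{C}$, giving $\det_{\mathbb{C}}\tau_n(H)=(\det H)^2$ for complex Hermitian $H$; the even-multiplicity observation; the multiplicativity bookkeeping in part (2); and the sign-fixing via connectedness of the positive definite cone and of $\text{GL}_{\mathbb{H}}(n)$ together with polynomial identities propagating from open or dense sets. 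But this is the routine outer layer; the core of Theorem 1.1.9 — the compatibility of Moore's combinatorial determinant with the complex realization $\tau$ — is assumed rather than proved, so the proposal does not yet constitute a proof.
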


\begin{prop}\label{prop:hyperhermitian} For a real $C^2 $ function $u$,  the horizontal  quaternionic Hessian  $(\overline{Q_l} {Q_m} u +8
\delta_{lm}\mathbf{i}\partial_t u )$ is hyperhermitian.
\end{prop}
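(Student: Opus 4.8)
The plan is to check the two defining conditions of a hyperhermitian matrix separately for $\mathcal{M}_{lm}:=\overline{Q_l}Q_m u+8\delta_{lm}\mathbf{i}\partial_t u$: that the diagonal entries are real, and that $\mathcal{M}_{lm}=\overline{\mathcal{M}_{ml}}$ off the diagonal. The uniform device I would use is to expand both operators into the real vector fields $X_a$ carrying constant quaternionic coefficients, writing $\overline{Q_l}=\sum_{\alpha=1}^{4}a_\alpha X_{4l+\alpha}$ and $Q_m=\sum_{\beta=1}^{4}\overline{a_\beta}X_{4m+\beta}$ with $(a_1,a_2,a_3,a_4)=(1,\mathbf{i},\mathbf{j},\mathbf{k})$. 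Applied to a real function $u$ this gives $\overline{Q_l}Q_m u=\sum_{\alpha,\beta}a_\alpha\overline{a_\beta}\,(X_{4l+\alpha}X_{4m+\beta}u)$, in which the scalar factors $X_{4l+\alpha}X_{4m+\beta}u$ are real, so that quaternionic conjugation acts only on the coefficients $a_\alpha\overline{a_\beta}$.

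For the diagonal I would simply invoke the identity (\ref{eq:bar-QQ}): since $\overline{Q_l}Q_l=X_{4l+1}^2+X_{4l+2}^2+X_{4l+3}^2+X_{4l+4}^2-8\mathbf{i}\partial_t$, the correction term $+8\mathbf{i}\partial_t u$ in $\mathcal{M}_{ll}$ cancels the anomaly $-8\mathbf{i}\partial_t u$ exactly, leaving $\mathcal{M}_{ll}=\sum_{\alpha}X_{4l+\alpha}^2u$, which is real and hence equal to its own conjugate.

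For the off-diagonal case $l\neq m$ the $\delta_{lm}$ term vanishes, so I must show $\overline{Q_l}Q_m u=\overline{\overline{Q_m}Q_l u}$. Using the anti-automorphism law $\overline{pq}=\bar q\bar p$ on each real-weighted coefficient and relabelling $\alpha\leftrightarrow\beta$, I get $\overline{\overline{Q_m}Q_l u}=\sum_{\alpha,\beta}a_\alpha\overline{a_\beta}\,(X_{4m+\beta}X_{4l+\alpha}u)$, which differs from $\overline{Q_l}Q_m u=\sum_{\alpha,\beta}a_\alpha\overline{a_\beta}\,(X_{4l+\alpha}X_{4m+\beta}u)$ only in the order of the two differentiations. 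Because $l\neq m$, the fields $X_{4l+\alpha}$ and $X_{4m+\beta}$ lie in different four-dimensional blocks, and by the bracket relations (\ref{eq:bracket-T}) (equivalently Proposition \ref{prop:brackets-Z}) all brackets across distinct blocks vanish; hence $X_{4l+\alpha}X_{4m+\beta}u=X_{4m+\beta}X_{4l+\alpha}u$ and the two expansions coincide, giving $\mathcal{M}_{lm}=\overline{\mathcal{M}_{ml}}$.

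The main obstacle is bookkeeping rather than depth: one must respect the noncommutativity of the $X_a$ and of $\mathbf{i},\mathbf{j},\mathbf{k}$ simultaneously, and in particular resist reordering $X_{4l+\alpha}X_{4m+\beta}u$ on the diagonal. The value of (\ref{eq:bar-QQ}) is that it isolates exactly the $-8\mathbf{i}\partial_t$ term that the diagonal correction is designed to absorb, whereas the vanishing of cross-block brackets guarantees that off the diagonal no such anomaly appears and plain conjugation already matches $\mathcal{M}_{lm}$ with $\overline{\mathcal{M}_{ml}}$.
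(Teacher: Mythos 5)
Your proof is correct, and it takes a genuinely different (and more elementary) route than the paper. The paper never expands $\overline{Q_l}Q_m$ into real vector fields; instead it uses the splitting $\overline{Q_l}=Z_{l0'}-Z_{l1'}\mathbf{j}$, $Q_m=Z_{(n+m)1'}-\mathbf{j}Z_{(n+m)0'}$ coming from the embedding of $\mathbb{H}$ into $\mathbb{C}^{2\times 2}$ (the fields (\ref{eq:nabla-jj'-new})), computes $\overline{Q_l}Q_mu=2\left(\triangle_{l(n+m)}u+\triangle_{lm}u\,\mathbf{j}\right)$ for $l\neq m$ via the commutators of Proposition \ref{prop:brackets-Z}, and then deduces hyperhermitianity from conjugation identities for the quantities $\triangle_{AB}u$. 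Your expansion $\overline{Q_l}Q_mu=\sum_{\alpha,\beta}a_\alpha\overline{a_\beta}\,X_{4l+\alpha}X_{4m+\beta}u$ with real scalar second derivatives reduces everything to three elementary facts: conjugation reverses quaternion products, second derivatives of a real $C^2$ function are real, and by (\ref{eq:bracket-T}) fields in distinct four-dimensional blocks commute, so that for $l\neq m$ the relabelled conjugate of $\overline{Q_m}Q_lu$ literally coincides with $\overline{Q_l}Q_mu$; the diagonal case rests on (\ref{eq:bar-QQ}), which the paper states without proof but which your own framework also yields in two lines, since the diagonal pairs give $\sum_\alpha X_{4l+\alpha}^2$ and the only surviving cross terms are $a_1\overline{a_2}[X_{4l+1},X_{4l+2}]+a_3\overline{a_4}[X_{4l+3},X_{4l+4}]=-8\mathbf{i}\partial_t$. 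What the paper's heavier route buys is the identity $\tau\left(\overline{Q_l}Q_mu+8\delta_{lm}\mathbf{i}\partial_tu\right)\mathbb{J}=2(\triangle_{AB}u)$ of (\ref{eq:Q-delta'}), i.e.\ the identification of the quaternionic Hessian with the coefficient matrix of the $2$-form $\triangle u$; that identification is exactly what gets reused in the proofs of Theorem \ref{thm:delta-QMA} and Corollary \ref{cor:hyperhermitian-positive}, and your argument, while entirely sufficient for the proposition as stated, does not produce it as a byproduct.
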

\begin{proof} It follows from definition (\ref{eq:nabla-jj'-new}) of $Z_{AA'}$'s that
\begin{equation}\label{eq:j-Z}
   \textbf{j}Z_{(n+m )0'}=-Z_{ m 1'}\textbf{j},\qquad \textbf{j}Z_{(n+m )1'}=Z_{ m 0'}\textbf{j}
\end{equation}and so
\begin{equation}\label{eq:QlQm0}\begin{aligned}\overline{Q_l} {Q_m}
&=\left(X_{4l+1}+\mathbf{i}X_{4l+2}+\mathbf{j}X_{4l+3}+\mathbf{k}X_{4l+4}\right)
\left(X_{4m+1}-\mathbf{i}X_{4m+2}-\mathbf{j}X_{4m+3}-\mathbf{k}X_{4m+4}\right)
\\&=\left(Z_{ l 0'}-Z_{l 1'}\textbf{j}\right)\left(Z_{(n+m )1'}-\textbf{j}Z_{(n+m )0'}\right)  \\&=\left(Z_{ l 0'}Z_{(n+m )1'}-Z_{ l
1'}Z_{(n+m )0'}\right) +\left(Z_{ l  0'}Z_{ m  1'}-Z_{ l 1'}Z_{ m 0'}\right) \textbf{j}.  \end{aligned}\end{equation}

When $l=m$, it follows    that
\begin{equation}\begin{aligned}\overline{Q_l} {Q_l} u
&
 =2 \triangle_{ l(n+l )}u -[Z_{ l 1'},Z_{(n+l )0'}]u+[Z_{ l  0'},Z_{ l  1'}] u\textbf{j}
=2 \triangle_{ l(n+l )}u - 8 \mathbf{i} \partial_{t }  u
\end{aligned}\end{equation}
by using (\ref{eq:vanish})-(\ref{eq:brackets-Z101}). Thus $\overline{Q_l} {Q_l} u +  8 \mathbf{i} \partial_{t }u$ is
real by (\ref{eq:brackets-Z'}).
If $l\neq m$,
we have
\begin{equation}\label{eq:QlQm}\begin{aligned}\overline{Q_l} {Q_m} u &=\left(Z_{ l 0'}Z_{(n+m )1'}-Z_{(n+m )0'}Z_{ l
1'}\right)u+\left(Z_{ l  0'}Z_{ m  1'}-Z_{ m 0}Z_{ l 1'}\right)u\textbf{j}\\
&=2\left(\triangle_{ l(n+m )}u+\triangle_{ l  m }u\textbf{j}\right), \end{aligned}\end{equation}
by using commutators
\begin{equation}\label{eq:commutators-1}
   [Z_{ l 1'},Z_{(n+m )0'}]=0 \qquad  {\rm and} \qquad  [Z_{ m 0'}, Z_{ l 1'}]=0,\qquad   {\rm for }\qquad  l\neq m,
\end{equation}
 by Proposition \ref{prop:brackets-Z}.

 To see the   horizontal  quaternionic Hessian  to be hyperhermitian, note that for $l\neq m$
  \begin{equation}\label{eq:Q-hyper}\begin{aligned}\overline{\overline{Q_l} {Q_m}  u } =2\left(\overline{\triangle_{ l(n+m
 )}u}-\textbf{j}\overline{ {\triangle}_{lm} u }\right),
 \end{aligned}\end{equation}
and
 \begin{equation}\label{eq:delta-anti}\begin{split}
    \overline{ {\triangle}_{l(n+m)} {u}}&=\overline{Z_{ l 0'}}\overline{Z_{(n+m )1'}}u-\overline{Z_{(n+m)0'}}\overline{Z_{ l 1'}}u
   = {Z_{(n+l )1'}}{Z_{ m 0'}}u -{Z_{ m 1'}} {Z_{(n+l )0'}}u \\&
   ={Z_{ m 0'}} {Z_{(n+l )1'}}u -{Z_{(n+l )0'}}{Z_{ m 1'}}u=  {\triangle}_{m(n+l )}  {u}
\end{split}\end{equation} by the conjugate of $Z_{AA'}$'s in (\ref{eq:nabla-jj'-new}) and (\ref{eq:commutators-1}).
 Similarly, for any $l,m$, we have
 \begin{equation}\label{eq:delta-anti2}\begin{split}
   \overline{ {\triangle}_{lm} u }&= (\overline{Z_{ l 0'}}\overline{Z_{ m 1'}}-\overline{Z_{ m 0'}}\overline{Z_{ l
  1'}})u=  ( -{Z_{ (n+l )  1'}}{Z_{ (n+m ) 0'}}+{Z_{ (n+m ) 1'}} {Z_{(n+l )  0'}})u,
\end{split}\end{equation}
 and so
\begin{equation}\label{eq:delta-anti2'}\begin{split}
  \textbf{j}\overline{ {\triangle}_{lm} u }
  &=(  {Z_{  l 0'}}{Z_{ m 1'}}-{Z_{  m 0'}} {Z_{ l 1'}})u\,\textbf{j} = - {\triangle}_{ m  l }u\, \textbf{j}
\end{split}\end{equation}by using (\ref{eq:j-Z}) and (\ref{eq:commutators-1}). Now
substitute (\ref{eq:delta-anti}) and (\ref{eq:delta-anti2'}) into  (\ref{eq:Q-hyper}) to get
 \begin{equation*} \begin{aligned}\overline{\overline{Q_l} {Q_m}  u } =2\left( \triangle_{m(n+l
 )}u + {\triangle}_{m l}u \textbf{j} \right)= {\overline {Q_m}{Q_l} u }
 \end{aligned}\end{equation*} for $l\neq m$.
This together with the reality of $\overline{Q_l} {Q_l} u +  8 \mathbf{i} \partial_{ t }u$    implies that the quaternionic Hessian
$(\overline{Q_l} {Q_m} u + 8\delta_{lm}  \mathbf{i} \partial_{ t }u )$ is hyperhermitian.
\end{proof}

As in \cite{wang-alg},
denote by $M_{\mathbb{F}}(p,m)$ the space of $\mathbb{F}$-valued $(p\times m)$-matrices, where
$\mathbb{F}=\mathbb{R},\mathbb{C},\mathbb{H}$.
For a quaternionic $p\times m$-matrix $\mathcal{M}$, write $\mathcal{M}=a+b\mathbf{j}$ for some    complex matrices $a, b\in
M_{\mathbb{C}}( p, m)$.
Then we define the  $ {\tau}(\mathcal{M})$ as the complex $(2p\times 2m)$-matrix
\begin{equation}\label{eq:tau-A}
    {\tau}(\mathcal{M}):= \left(
                                       \begin{array}{rr}
                                          {a} &-{b} \\
                                         \overline {b} & \overline{{a}} \\
                                       \end{array} \right),
\end{equation}
 Recall that for  skew symmetric matrices $M_\alpha=(M_{\alpha;ij})\in M_{\mathbb{C}}(2n, 2n )$, $\alpha=1,\ldots, n$, such that  $2$-forms
 $\omega_\alpha=\sum_{i,j} M_{\alpha;ij}\omega^i\wedge\omega^j$  are real, define
\begin{equation}\label{eq:triangleup-n}
   \omega_1\wedge\cdots\wedge\omega_n=\bigtriangleup_n(M_1,\ldots,M_n)\Omega_{2n},
\end{equation}

Consider the homogeneous polynomial $\text{det}(\lambda_1\mathcal{M}_1+\ldots+\lambda_n\mathcal{M}_n)$ in real variables
$\lambda_1,\ldots,\lambda_n$ of degree $n$. The coefficient of the monomial $\lambda_1\ldots\lambda_n$ divided by $n!$ is called the
{\it mixed discriminant} of the matrices $\mathcal{M}_1,\ldots,\mathcal{M}_n$, and it is denoted by
$\text{det}(\mathcal{M}_1,\ldots,\mathcal{M}_n)$. In particular, when $\mathcal{M}_1=\ldots=\mathcal{M}_n=\mathcal{M }$,
$\text{det}(\mathcal{M}_1,\ldots,\mathcal{M}_n)=\text{det}(\mathcal{M })$.

 \begin{thm} \label{thm:det} {\rm (Theorem 1.2 in  \cite{wang-alg})} For    hyperhermitian matrices $\mathcal{M}_1,\ldots,
 \mathcal{M}_n\in M_{\mathbb{ H}}( n )$,   we have
    \begin{equation}\label{eq:det}2^n n!~\text{det}~( \mathcal{M}_1,\ldots,\mathcal{M}_n)=
       \triangle_n\left( \tau(\mathcal{M}_1)\mathbb{J},\ldots, \tau(\mathcal{M}_n)\mathbb{J } \right),
    \end{equation}
    where\begin{equation}\label{eq:j-symplectic}
   \mathbb{J}=\left(
                                       \begin{array}{cc}
                                           0& I_n \\
                                      -I_n& 0\\
                                       \end{array} \right).
\end{equation}
 \end{thm}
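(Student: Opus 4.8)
The plan is to reduce the identity \eqref{eq:det} to the diagonal case by polarization, then diagonalize the hyperhermitian matrix and evaluate both sides explicitly. Since the mixed discriminant $\det(\mathcal{M}_1,\ldots,\mathcal{M}_n)$ is by definition the symmetric polarization of the Moore determinant, and $\triangle_n(\tau(\mathcal{M}_1)\mathbb{J},\ldots,\tau(\mathcal{M}_n)\mathbb{J})$ is symmetric and $\mathbb{R}$-multilinear in $\mathcal{M}_1,\ldots,\mathcal{M}_n$ (the map $\mathcal{M}\mapsto\tau(\mathcal{M})\mathbb{J}$ is $\mathbb{R}$-linear by \eqref{eq:tau-A}, $\triangle_n$ is multilinear by \eqref{eq:triangleup-n}, and it is symmetric because $2$-forms commute under $\wedge$), both sides of \eqref{eq:det} are symmetric $\mathbb{R}$-multilinear forms on the real vector space of hyperhermitian matrices. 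A symmetric multilinear form is determined by its diagonal in characteristic zero, so it suffices to establish $2^n n!\det(\mathcal{M})=\triangle_n(\tau(\mathcal{M})\mathbb{J},\ldots,\tau(\mathcal{M})\mathbb{J})$.

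Before diagonalizing I would record the algebraic properties of $\tau$: it is $\mathbb{R}$-linear and multiplicative with $\tau(\mathcal{A}^*)=\tau(\mathcal{A})^*$, and the realization \eqref{eq:tau-A} satisfies $\tau(\mathcal{A})\mathbb{J}=\mathbb{J}\overline{\tau(\mathcal{A})}$ (a direct block computation). When $\mathcal{M}$ is hyperhermitian, $\tau(\mathcal{M})$ is a Hermitian complex matrix, so $\tau(\mathcal{M})^t=\overline{\tau(\mathcal{M})}$; combining this with $\mathbb{J}^t=-\mathbb{J}$ and $\tau(\mathcal{M})\mathbb{J}=\mathbb{J}\overline{\tau(\mathcal{M})}$ yields $(\tau(\mathcal{M})\mathbb{J})^t=-\mathbb{J}\tau(\mathcal{M})^t=-\mathbb{J}\overline{\tau(\mathcal{M})}=-\tau(\mathcal{M})\mathbb{J}$. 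Thus $\tau(\mathcal{M})\mathbb{J}$ is skew-symmetric and the functional $\triangle_n$ of \eqref{eq:triangleup-n} genuinely applies to it.

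Next I would diagonalize. By Proposition \ref{prop:diagonal}, write $\mathcal{M}=\mathcal{U}^*D\mathcal{U}$ with $\mathcal{U}$ quaternionic unitary and $D$ real diagonal. On the left, Proposition \ref{prop:Moore} gives $\det(\mathcal{M})=\det(D)\det(\mathcal{U}^*\mathcal{U})=\prod_l d_l$. On the right, multiplicativity of $\tau$ together with $\tau(\mathcal{A})\mathbb{J}=\mathbb{J}\overline{\tau(\mathcal{A})}$ gives $\tau(\mathcal{M})\mathbb{J}=\tau(\mathcal{U})^*\tau(D)\mathbb{J}\,\overline{\tau(\mathcal{U})}=W^t(\tau(D)\mathbb{J})W$ with $W:=\overline{\tau(\mathcal{U})}$. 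The key transformation rule is $\triangle_n(C^tM_1C,\ldots,C^tM_nC)=\det(C)\,\triangle_n(M_1,\ldots,M_n)$, which I would prove by pulling the defining forms $\omega_\alpha=\sum M_{\alpha;ij}\omega^i\wedge\omega^j$ back under the linear map $\phi$ on $\mathbb{C}^{2n}$ with matrix $C$, using that $\phi^*$ commutes with $\wedge$ and $\phi^*\Omega_{2n}=\det(C)\,\Omega_{2n}$. Since $\tau(\mathcal{U})$ is complex unitary and $\det\tau(\mathcal{U})=1$ (it lies in the compact symplectic group fixing the structure $v\mapsto\mathbb{J}\bar v$, equivalently $\det\tau(\mathcal{U})$ is real, nonnegative, and of modulus one), we obtain $\det W=\overline{\det\tau(\mathcal{U})}=1$, hence $\triangle_n(\tau(\mathcal{M})\mathbb{J},\ldots)=\triangle_n(\tau(D)\mathbb{J},\ldots)$. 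Finally, with $\Lambda=\mathrm{diag}(d_0,\ldots,d_{n-1})$ the matrix $\tau(D)\mathbb{J}$ is block-off-diagonal with blocks $\pm\Lambda$, so its $2$-form is $\omega=\sum_l 2d_l\,\omega^l\wedge\omega^{n+l}$; its top power is $\omega^{\wedge n}=2^n n!\bigl(\prod_l d_l\bigr)\,\omega^0\wedge\omega^n\wedge\cdots\wedge\omega^{n-1}\wedge\omega^{2n-1}$, and the interleaved ordering chosen in the definition \eqref{eq:omega} of $\Omega_{2n}$ makes this exactly $2^n n!(\prod_l d_l)\,\Omega_{2n}$. Thus $\triangle_n(\tau(D)\mathbb{J},\ldots)=2^n n!\prod_l d_l=2^n n!\det(\mathcal{M})$, settling the diagonal case and hence, by polarization, the theorem.

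I expect the main obstacle to be the careful bookkeeping of the constant $2^n n!$ and of signs: proving the congruence rule directly for the mixed functional $\triangle_n$ rather than borrowing it from standard Pfaffian theory, checking that the interleaved convention in $\Omega_{2n}$ absorbs the $(-1)^{n(n-1)/2}$ that would otherwise arise from a block-off-diagonal Pfaffian, and verifying $\det\tau(\mathcal{U})=1$ so that the unitary conjugation contributes no extra factor. The conceptual steps (polarization, skew-symmetry, diagonalization) are routine; the arithmetic of normalizations is where care is required.
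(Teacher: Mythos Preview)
The paper does not prove Theorem~\ref{thm:det}; it is quoted from \cite{wang-alg} (``Theorem 1.2 in \cite{wang-alg}'') and used as a black box in the proof of Theorem~\ref{thm:delta-QMA}. So there is no proof in this paper to compare your proposal against.

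That said, your outline is a correct and standard way to establish the result. The reduction by polarization is legitimate because both sides are symmetric $\mathbb{R}$-multilinear on hyperhermitian matrices; your verification that $\tau(\mathcal{M})\mathbb{J}$ is skew-symmetric is right; the congruence rule $\triangle_n(C^tM_1C,\ldots,C^tM_nC)=\det(C)\,\triangle_n(M_1,\ldots,M_n)$ follows exactly as you describe from the wedge-product definition \eqref{eq:triangleup-n}; and $\det\tau(\mathcal{U})=1$ holds because $\tau(\mathcal{U})\mathbb{J}\tau(\mathcal{U})^t=\mathbb{J}$ (a consequence of $\tau(\mathcal{A})\mathbb{J}=\mathbb{J}\overline{\tau(\mathcal{A})}$ and unitarity), placing $\tau(\mathcal{U})$ in $\mathrm{Sp}(2n,\mathbb{C})$. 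The diagonal computation is straightforward, and the interleaved ordering in \eqref{eq:omega} indeed makes the constant come out as $2^n n!$ without an extra sign. Your self-identified obstacles are exactly the points requiring care, but none of them hides a genuine gap.
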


{\it Proof of Theorem \ref{thm:delta-QMA}}.  The proof is similar to that of Theorem 1.3 in  \cite{wang-alg} except that $Z_{AA'}$'s
are noncommutative.
   (\ref{eq:QlQm}) implies that the   quaternionic Hessian  can be written as
\begin{equation*}
\left (\overline{Q_l} {Q_m} u +8\delta_{lm}\mathbf{i}\partial_t u\right ) =a+b\mathbf{j},
\end{equation*}
with $n\times n$ complex matrices
\begin{equation*}
   a=2(\triangle_{ l(n+m )}u),\qquad b=2(\triangle_{ l   m  }u).
\end{equation*}
Thus
\begin{equation}\label{eq:Q-delta}\begin{split}
   \tau\left (\overline{Q_l} {Q_m} u+ 8\delta_{lm}\mathbf{i}\partial_t u \right)\mathbb{J}&=\left(
                                       \begin{array}{rr}
                                         a &- b \\
\overline{b}&\overline{a}\\
                                       \end{array} \right)\mathbb{J}=\left(
                                       \begin{array}{rr}
                                         b &  a \\
-\overline{a}&\overline{b}\\
                                       \end{array} \right)\\&=2\left(
                                       \begin{array}{rr}
                                        \triangle_{ l   m  }u &\triangle_{ l(n+m )}u \\
- \overline{\triangle_{ l(n+m )}u}&\overline{\triangle_{ l m }u}\\
                                       \end{array} \right) ,
\end{split}\end{equation} Note that $\triangle_{ l   l }u= \triangle_{ (n+l )  (n+l)   }{u}=0$ by definition. For $l\neq m$,
\begin{equation*}
    \overline{ {\triangle}_{l(n+m)}  }
   ={Z_{ m 0'}} {Z_{(n+l )1'}} -{Z_{(n+l )0'}}{Z_{ m 1'}}= - {\triangle}_{(n+l )m}
\end{equation*}
by (\ref{eq:delta-anti}),  while for $l= m$  we also have
\begin{equation}\label{eq:delta-anti'}\begin{split}
    \overline{ {\triangle}_{l(n+l)} {u}}&={Z_{(n+l )1'}}{Z_{ l 0'}} u-{Z_{ l 1'}} {Z_{(n+l )0'}}u
      ={Z_{ l 0'}}{Z_{(n+l )1'}} u-{Z_{(n+l )0'}}{Z_{ l 1'}}u = - {\triangle}_{(n+l )l}  {u} ,
\end{split}\end{equation} by using Proposition \ref{prop:brackets-Z} (2).
Moreover,
\begin{equation*}
   \overline{ {\triangle}_{lm} {u}} =\triangle_{ (n+l )  (n+m )   }{u},
\end{equation*}
which follows from
(\ref{eq:delta-anti2}).
 Therefore we have
 \begin{equation}\label{eq:Q-delta'}\begin{split}
   \tau\left (\overline{Q_l} {Q_m} u+ 8\delta_{lm}\mathbf{i}\partial_tu \right)\mathbb{J}&=2 (\triangle_{ AB  }u).
\end{split}\end{equation}
 Then the result follows from applying Theorem \ref{thm:det} to matrices $\mathcal{M}_j=2\left(\overline{Q_l} {Q_m} u_j +
 8\delta_{lm}\mathbf{i}\partial_tu_j\right)$.

\section{   closed positive currents  on the quaternionic
Heisenberg group}

\subsection{Positive
$2k$-forms} Now let us recall definitions of real forms and positive $2k$-forms  (cf. \cite{alesker2} \cite{wan-wang} \cite{wang-alg}  and references therein).
Fix a basis
$\{\omega^0,\omega^1,\ldots,\omega^{2n-1}\}$ of $\mathbb{C}^{2n}$. Let
 \begin{equation}\label{eq:bn}
 \beta_n:=\sum_{l=0}^{n-1} \omega^l\wedge\omega^{  n+l } .
\end{equation}
Then
$ \beta_n^n=\wedge^n\beta_n=n!~\Omega_{2n},$ where $\Omega_{2n}$ is given by (\ref{eq:omega}). For
$\mathcal{A}\in\text{GL}_{\mathbb{H}}(n)$, define the \emph{induced $\mathbb{C}$-linear transformation} of $\mathcal{A}$ on $\mathbb{C}^{2n}$ as $\mathcal{A}.\omega^p=\tau(\mathcal{A}). \omega^p$ with
\begin{equation}\label{A.}M.\omega^p=\sum_{j=0}^{2n-1}M_{pj} \omega^j,\end{equation}
for $M\in M_{\mathbb{C}}(2n,2n)$, and define the \emph{induced
$\mathbb{C}$-linear transformation} of $\mathcal{A}$ on $\wedge^{2k}\mathbb{C}^{2n}$ as
 \begin{equation*}
   \mathcal{ A}.(\omega^0\wedge\omega^1\wedge\ldots\wedge\omega^{2k-1})=\mathcal{A}.\omega^0\wedge \mathcal{A}.\omega^1\wedge\ldots \wedge \mathcal{A}.\omega^{2k-1}.
 \end{equation*}
Therefore for $A\in\text{U}_{\mathbb{H}}(n)$,
$\mathcal{A}.\beta_n  =\beta_n,
$. Consequently $\mathcal{A}.(\wedge^n
\beta_n)=\wedge^n \beta_n$, i.e.,
$\mathcal{A}.\Omega_{2n}=\Omega_{2n}$.

$\textbf{j}$ defines a real linear map
\begin{equation}\label{eq:rouj}\rho(\textbf{j}):\mathbb{C}^{2n}\rightarrow\mathbb{C}^{2n},
\qquad\rho(\textbf{j})(z\omega^k)=\overline{z}\mathbb{\mathbb{J}}.\omega^k,
\end{equation}which is not $\mathbb{C}$-linear, where $\mathbb{J}$ is given by (\ref{eq:j-symplectic}).   Also the right multiplying of $\textbf{i}$:
$(q_1,\ldots,q_n)\mapsto(q_1\textbf{i},\ldots,q_n\textbf{i})$ induces
\begin{equation*}\label{roui}\rho(\textbf{i}):\mathbb{C}^{2n}\rightarrow\mathbb{C}^{2n},
\qquad\rho(\textbf{i})(z\omega^k)=z\textbf{i}\omega^k.
\end{equation*}Thus $\rho$ defines $GL_{\mathbb{H}}(1)$-action on $\mathbb{C}^{2n}$. The actions of $GL_{\mathbb{H}}(1)$ and
$GL_{\mathbb{H}}(n)$ on $\mathbb{C}^{2n}$ are commutative, and equip  $\mathbb{C}^{2n}$ a structure of
$GL_{\mathbb{H}}(n)GL_{\mathbb{H}}(1)$-module. This action extends to $\wedge^{2k}\mathbb{C}^{2n}$ naturally.

The real action (\ref{eq:rouj}) of $ \rho(\textbf{j})$ on $\mathbb{C}^{2n}$ naturally induces an action on $\wedge^{2k}\mathbb{C}^{2n}$.
An element $\varphi$ of $\wedge^{2k}\mathbb{C}^{2n}$ is called  {\it real} if $\rho(\textbf{j})\varphi=\varphi$. Denote by
$\wedge^{2k}_{\mathbb{R}}\mathbb{C}^{2n}$ the subspace of all real elements in $\wedge^{2k}\mathbb{C}^{2n}$. These forms are
counterparts of $(k,k)-$forms in complex analysis.

A right $\mathbb{H}$-linear
map $g:\mathbb{H}^{k}\rightarrow \mathbb{H}^{m}$ induces a $\mathbb{C}$-linear map $\tau(g):\mathbb{C}^{2k}\rightarrow\mathbb{C}^{2m}$.
If we write $g=(g_{jl})_{m\times k}$ with $g_{jl}\in \mathbb{H}$, then $\tau(g)$ is the complex $(2m\times2k)$-matrix  given by
(\ref{eq:tau-A}). The induced $\mathbb{C}$-linear pulling back transformation of
$g^*:\mathbb{C}^{2m}\rightarrow\mathbb{C}^{2k}$ is defined
as:\begin{equation}\label{g^*}g^*\widetilde{\omega}^p=\tau(g).\omega^p=\sum_{j=0}^{2k-1}\tau(g)_{pj}\omega^j,\qquad p=0,\ldots,2m-1,\end{equation}where
$\{\widetilde{\omega}^0,\ldots,\widetilde{\omega}^{2m-1}\}$ is a basis of $\mathbb{C}^{2m}$ and $\{\omega^0,\ldots,\omega^{2k-1}\}$ is a
basis of $\mathbb{C}^{2k}$. It  induces a $\mathbb{C}$-linear pulling back transformation  on $\wedge^{2k}\mathbb{C}^{2m}$   given by
$
  g^*(\alpha\wedge\beta)=g^*\alpha\wedge g^*\beta
$ inductively.

An element $\omega\in\wedge_{\mathbb{R}}^{2k}\mathbb{C}^{2n}$ is said to be \emph{elementary strongly positive} if there exist linearly
independent right $\mathbb{H}$-linear mappings $\eta_j:\mathbb{H}^n\rightarrow \mathbb{H}$ , $j=1,\ldots,k$, such that
\begin{equation*}\omega=\eta_1^*\widetilde{\omega}^0\wedge
\eta_1^*\widetilde{\omega}^1\wedge\ldots\wedge\eta_k^*\widetilde{\omega}^0\wedge \eta_k^*\widetilde{\omega}^1,\end{equation*}where
$\{\widetilde{\omega}^0,\widetilde{\omega}^1\}$ is a basis of $\mathbb{C}^{2}$ and $\eta_j^*:~\mathbb{C}^{2}\rightarrow\mathbb{C}^{2n}$
is the induced $\mathbb{C}$-linear pulling back transformation of $\eta_j$.  The definition in
the case $k=0$ is obvious:
$\wedge^{0}_{\mathbb{R}}\mathbb{C}^{2n}=\mathbb{R}$ and the positive elements
are the usual ones. For $k=n$, dim$
_{\mathbb{C}}\wedge^{2n}\mathbb{C}^{2n}=1$,  $\Omega_{2n}$ defined by (\ref{eq:omega}) is an element
of $\wedge_{\mathbb{R}}^{2n}\mathbb{C}^{2n}$ ($\rho(\textbf{j})\beta_n=\beta_n$) and spans it. An
element $\eta\in\wedge_{\mathbb{R}}^{2n}\mathbb{C}^{2n}$ is called\emph{
positive} if $\eta=\kappa~\Omega_{2n}$ for some non-negative number $\kappa$. By definition,
   $\omega\in\wedge_{\mathbb{R}}^{2k}\mathbb{C}^{2n}$ is elementary   strongly positive  if and only if
\begin{equation}\label{eq:elementary-strongly-positive}
  \omega=\mathcal{M}.(\omega^0\wedge\omega^n\wedge\ldots\wedge\omega^{ k-1}\wedge\omega^{ n+k-1})
\end{equation}for some quaternionic matrix $\mathcal{M}\in M_{\mathbb{H}}(k,n)$ of rank $k$.

An element $\omega\in\wedge_{\mathbb{R}}^{2k}\mathbb{C}^{2n}$ is called \emph{strongly positive} if it belongs to the convex cone
$\text{SP}^{2k}\mathbb{C}^{2n}$ in $\wedge_{\mathbb{R}}^{2k}\mathbb{C}^{2n}$ generated by elementary strongly positive $(2k)$-elements; that
is, $\omega=\sum_{l=1}^m\lambda_l\xi_l$ for some non-negative numbers $\lambda_1,\ldots,\lambda_m$ and some elementary strongly positive
elements $\xi_1,\ldots,\xi_m$. An $2k$-element $\omega$ is said to be \emph{positive} if for any   strongly positive element
$\eta\in \text{SP}^{2n-2k}\mathbb{C}^{2n}$, $\omega\wedge\eta$ is positive. We will denote the set of all positive $2k$-elements by
$\wedge^{2k}_{\mathbb{R}+}\mathbb{C}^{2n}$. Any $2k$ element is a $\mathbb{C}$-linear
combination of strongly positive $2k$ elements by Proposition 5.2 in \cite{alesker2}, i.e.  ${\rm span}_\mathbb{C}\{\varphi;~\varphi\in
\wedge^{2k}_{\mathbb{R}+}\mathbb{C}^{2n}\}=span_\mathbb{C}\{\varphi;~\varphi\in
\text{SP}^{2k}\mathbb{C}^{2n}\}=\wedge^{2k}\mathbb{C}^{2n}$.
By definition, $ \beta_n$ is a strongly  positive
$2$-form, and
$ \beta_n^n=\wedge^n\beta_n=n!~\Omega_{2n}$ is a positive
$2n$-form.

For a domain $\Omega$   in $\mathscr{H} $, let
$\mathcal{D}_0^{p}(\Omega)=C_0(\Omega,\wedge^{p}\mathbb{C}^{2n})$ and $
\mathcal{D}^{p}(\Omega)=C_0^\infty(\Omega,\wedge^{p}\mathbb{C}^{2n}).
$ An element of the latter one is called a \emph{test
$p$-form}. An element $\eta\in\mathcal{D}_0^{2k}(\Omega)$ is called a \emph{positive $2k$-form} (respectively, \emph{strongly positive
$2k$-form}) if for any $q\in\Omega$, $\eta(q)$ is a positive (respectively, strongly positive) element.

Theorem 1.1 in \cite{wang-alg} and its proof implies the following result.
\begin{prop} \label{prop:normal-form} For a hyperhermitian $n\times n$-matrix $\mathcal{M}=({\mathcal{M}}_{jk})$,
there exists a quaternionic unitary matrix $\mathcal{E}\in \text{U}_{\mathbb{H}}(n)$ such that
$\mathcal{E}^*\mathcal{M}\mathcal{E}={\rm diag} (\nu_0,\ldots,\nu_{n-1})$. Then
the $2$-form
\begin{equation}\label{eq:real-2form}
   \omega =\sum_{A,B=0}^{2n-1} M_{ AB}\,\omega^A\wedge\omega^B,
\end{equation} with
$
   M=\tau(\mathcal{M})\mathbb{J},
$ can normalize $\omega$ as
\begin{equation}\label{eq:normal-form}
                                         \omega =2\sum_{ l=0}^{n-1} \nu_{ l}\widetilde{\omega}^l\wedge\widetilde{\omega}^{l+n}
                                       \end{equation}
    with $\widetilde {\omega}^A= \mathcal{E^*}.{\omega}^A$. In particular, $\omega$ is strongly positive if and only if $\mathcal{M} $ is nonnegative.
\end{prop}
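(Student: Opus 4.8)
The plan is to reduce to the diagonal case via the spectral theorem for hyperhermitian matrices, compute that case by hand, and then transport the answer back by the induced $\mathrm{U}_{\mathbb H}(n)$-action. First I would invoke Proposition \ref{prop:diagonal} to produce $\mathcal E\in\mathrm U_{\mathbb H}(n)$ with $\mathcal E^*\mathcal M\mathcal E=D:=\mathrm{diag}(\nu_0,\dots,\nu_{n-1})$, $\nu_l\in\mathbb R$; this is exactly the first assertion and it supplies the real numbers $\nu_l$.

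Next I would dispose of the diagonal case by direct computation. Writing $D=a+b\mathbf j$ with $a=D$ real and $b=0$, definition \eqref{eq:tau-A} gives $\tau(D)=\bigl(\begin{smallmatrix}D&0\\0&D\end{smallmatrix}\bigr)$, whence $\tau(D)\mathbb J=\bigl(\begin{smallmatrix}0&D\\-D&0\end{smallmatrix}\bigr)$. Reading the coefficients off \eqref{eq:real-2form}, the only nonzero entries are $(l,n+l)\mapsto\nu_l$ and $(n+l,l)\mapsto-\nu_l$, so that the associated real $2$-form is $\sum_{A,B}(\tau(D)\mathbb J)_{AB}\,\omega^A\wedge\omega^B=2\sum_{l=0}^{n-1}\nu_l\,\omega^l\wedge\omega^{l+n}$. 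This is the desired normal form, but written in the original frame $\{\omega^A\}$.

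The crux is the \emph{equivariance} that transports this back. For a hyperhermitian $\mathcal N$ write $\omega_{\mathcal N}:=\sum_{A,B}(\tau(\mathcal N)\mathbb J)_{AB}\,\omega^A\wedge\omega^B$; I must show that rewriting $\omega_D$ in the frame $\widetilde\omega^A=\mathcal E^*.\omega^A=\sum_j\tau(\mathcal E^*)_{Aj}\omega^j$ reproduces $\omega_{\mathcal M}$. Substituting the $\widetilde\omega^A$, the matrix of $2\sum_l\nu_l\widetilde\omega^l\wedge\widetilde\omega^{l+n}=\sum_{A,B}(\tau(D)\mathbb J)_{AB}\widetilde\omega^A\wedge\widetilde\omega^B$ in the $\omega$-frame is the congruence $\tau(\mathcal E^*)^{t}\,\tau(D)\mathbb J\,\tau(\mathcal E^*)$, and the goal is the identity $\tau(\mathcal E^*)^{t}\,\tau(D)\mathbb J\,\tau(\mathcal E^*)=\tau(\mathcal M)\mathbb J$. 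The tools are multiplicativity $\tau(\mathcal E^*\mathcal M\mathcal E)=\tau(\mathcal E^*)\tau(\mathcal M)\tau(\mathcal E)$, the fact that $\tau(\mathcal E)$ is complex unitary and symplectic (the latter being equivalent to the invariance $\mathcal E.\beta_n=\beta_n$ already recorded for $\mathcal E\in\mathrm U_{\mathbb H}(n)$), and the intertwining relation $\tau(\mathcal C)\mathbb J=\mathbb J\,\overline{\tau(\mathcal C)}$. I expect this congruence identity to be the main obstacle: the complex conjugation built into $\tau$ (through $\tau(\mathcal C)\mathbb J=\mathbb J\,\overline{\tau(\mathcal C)}$) does not commute with transposition, so matching the conjugates and the exact placement of $\mathbb J$ and the transposes is delicate; this is precisely the algebraic input recalled from Theorem 1.1 of \cite{wang-alg}.

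Finally I would settle the positivity equivalence from the normal form $\omega=2\sum_l\nu_l\,\widetilde\omega^l\wedge\widetilde\omega^{l+n}$. Each factor $\widetilde\omega^l\wedge\widetilde\omega^{l+n}=\mathcal E^*.(\omega^l\wedge\omega^{l+n})$ is elementary strongly positive by \eqref{eq:elementary-strongly-positive}, since $\omega^l\wedge\omega^{l+n}$ is and $\mathcal E^*\in\mathrm U_{\mathbb H}(n)$ preserves the elementary strongly positive cone. Hence if $\mathcal M\ge0$ then all $\nu_l\ge0$ and $\omega$ is a nonnegative combination of elementary strongly positive $2$-forms, so it is strongly positive. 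Conversely, wedging with the strongly positive $(2n-2)$-form $\widetilde\Omega_{\widehat l}:=\bigwedge_{m\neq l}\widetilde\omega^m\wedge\widetilde\omega^{m+n}$ kills all terms but the $l$-th and yields $\omega\wedge\widetilde\Omega_{\widehat l}=2\nu_l\,\Omega_{2n}$ (using $\mathcal E^*.\Omega_{2n}=\Omega_{2n}$), which is a positive $2n$-form only when $\nu_l\ge0$; thus strong positivity forces every $\nu_l\ge0$, i.e.\ $\mathcal M\ge0$.
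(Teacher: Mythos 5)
Your skeleton --- diagonalize via Proposition \ref{prop:diagonal}, compute the diagonal case by hand, transport the answer by the induced action, then read positivity off the normal form --- is the natural one; the paper itself offers no argument beyond citing Theorem 1.1 of \cite{wang-alg}, so your reconstruction is in that sense more detailed than the paper, and your diagonal computation and the two-sided positivity argument are correct \emph{granted} the normal form. The genuine gap is exactly the step you label ``the main obstacle'' and then decline to carry out: the congruence identity $\tau(\mathcal{E}^*)^{t}\,\tau(D)\mathbb{J}\,\tau(\mathcal{E}^*)=\tau(\mathcal{M})\mathbb{J}$ is not delicate-but-true, it is \emph{false} in general, so it cannot be discharged by deferring to a citation. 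Running the computation with precisely the tools you list ($\tau(\mathcal{E}^*)=\tau(\mathcal{E})^*$, multiplicativity of $\tau$, and $\tau(\mathcal{C})\mathbb{J}=\mathbb{J}\,\overline{\tau(\mathcal{C})}$) gives
\begin{equation*}
\tau(\mathcal{M})\mathbb{J}=\tau(\mathcal{E})\tau(D)\tau(\mathcal{E}^*)\mathbb{J}
=\tau(\mathcal{E})\tau(D)\mathbb{J}\,\overline{\tau(\mathcal{E}^*)}
=\tau(\mathcal{E})\bigl[\tau(D)\mathbb{J}\bigr]\tau(\mathcal{E})^{t},
\end{equation*}
so the congruence matrix is $\tau(\mathcal{E})^{t}=\overline{\tau(\mathcal{E}^*)}$, which differs from your $\tau(\mathcal{E}^*)$ by a complex conjugation. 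With your frame one gets, for \emph{every} diagonalizing $\mathcal{E}$,
\begin{equation*}
\tau(\mathcal{E}^*)^{t}\,\tau(D)\mathbb{J}\,\tau(\mathcal{E}^*)=\mathbb{J}\,\tau(\mathcal{M})=\overline{\tau(\mathcal{M})\mathbb{J}},
\end{equation*}
the conjugated coefficient matrix, which agrees with $\tau(\mathcal{M})\mathbb{J}$ only when $\mathcal{M}$ has real entries.

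A concrete check: for $n=2$ the hyperhermitian matrix $\mathcal{M}=\left(\begin{smallmatrix}0&\mathbf{i}\\-\mathbf{i}&0\end{smallmatrix}\right)$ is diagonalized by $\mathcal{E}=\frac{1}{\sqrt{2}}\left(\begin{smallmatrix}1&1\\-\mathbf{i}&\mathbf{i}\end{smallmatrix}\right)$ with $D=\mathrm{diag}(1,-1)$; here $\omega=2\mathbf{i}\,\omega^{0}\wedge\omega^{3}-2\mathbf{i}\,\omega^{1}\wedge\omega^{2}$, while the frame $\widetilde{\omega}^{A}=\mathcal{E}^{*}.\omega^{A}$, read literally through (\ref{A.}), yields $2(\widetilde{\omega}^{0}\wedge\widetilde{\omega}^{2}-\widetilde{\omega}^{1}\wedge\widetilde{\omega}^{3})=-\omega$; by contrast the frame $\widehat{\omega}^{A}=\sum_{j}\tau(\mathcal{E})_{jA}\,\omega^{j}$, i.e.\ the one with coefficient matrix $\tau(\mathcal{E})^{t}$, does reproduce $\omega$. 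So the conjugation-versus-transposition mismatch you anticipated is not a technicality to be looked up: it is where the argument actually breaks, and it shows that the frame in the statement must be interpreted as (or replaced by) the conjugate frame $\widehat{\omega}^{A}$ for (\ref{eq:normal-form}) to hold. A complete blind proof had to detect this and prove the corrected congruence --- which, as shown above, is a two-line computation with your own tools --- after which your positivity argument goes through verbatim in the frame $\widehat{\omega}^{A}$ (each $\widehat{\omega}^{l}\wedge\widehat{\omega}^{l+n}$ is still elementary strongly positive, and $\widehat{\omega}^{0}\wedge\widehat{\omega}^{n}\wedge\cdots\wedge\widehat{\omega}^{n-1}\wedge\widehat{\omega}^{2n-1}=\Omega_{2n}$ by unitarity). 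As written, however, your proof rests its crucial step on an identity that fails, so it does not close.
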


\begin{prop} \label{prop:wedge-positive}   For any $C^1$ real function $u$,  $d_0u\wedge d_1u$ is elementary strongly positive if grad $u\neq0$.
 \end{prop}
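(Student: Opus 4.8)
The plan is to produce a single right $\mathbb{H}$-linear map $\eta:\mathbb{H}^n\to\mathbb{H}$ whose induced pullback reproduces both $d_0u$ and $d_1u$, namely $d_0u=\eta^*\widetilde\omega^0$ and $d_1u=\eta^*\widetilde\omega^1$; then $d_0u\wedge d_1u=\eta^*(\widetilde\omega^0\wedge\widetilde\omega^1)$ is, by definition, elementary strongly positive (the case $k=1$) as soon as $\eta\neq0$.

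First I would write out the two $1$-forms explicitly. Since $u$ is of degree $p=0$, the definitions give $d_0u=\sum_{A=0}^{2n-1}(Z_{A0'}u)\,\omega^A$ and $d_1u=\sum_{A=0}^{2n-1}(Z_{A1'}u)\,\omega^A$. Reading off the rows of (\ref{eq:nabla-jj'-new}), for $l=0,\ldots,n-1$ one has $Z_{l0'}u=(X_{4l+1}+\mathbf{i}X_{4l+2})u$, $Z_{(n+l)0'}u=(X_{4l+3}-\mathbf{i}X_{4l+4})u$, $Z_{l1'}u=-(X_{4l+3}+\mathbf{i}X_{4l+4})u$ and $Z_{(n+l)1'}u=(X_{4l+1}-\mathbf{i}X_{4l+2})u$. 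The decisive structural fact, used throughout, is that $u$ is real and the $X_a$ are real vector fields, so every $X_au$ is a real function and the four coefficients above are honest complex-conjugate pairs.

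Second, I would define $\eta:=(\eta_0,\ldots,\eta_{n-1})$ with $\eta_l:=a_l+b_l\mathbf{j}$, where $a_l:=(X_{4l+1}+\mathbf{i}X_{4l+2})u$ and $b_l:=(-X_{4l+3}+\mathbf{i}X_{4l+4})u$ are complex-valued. Applying $\tau$ to the $1\times n$ matrix $\eta=a+b\mathbf{j}$ as in (\ref{eq:tau-A}) yields the $2\times 2n$ complex matrix with first row $(a,-b)$ and second row $(\overline b,\overline a)$. By the definition of the induced pullback, this gives $\eta^*\widetilde\omega^0=\sum_l a_l\omega^l-\sum_l b_l\omega^{n+l}$ and $\eta^*\widetilde\omega^1=\sum_l\overline{b_l}\,\omega^l+\sum_l\overline{a_l}\,\omega^{n+l}$. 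Using the reality of $X_au$ to compute $\overline{a_l}=(X_{4l+1}-\mathbf{i}X_{4l+2})u$ and $\overline{b_l}=-(X_{4l+3}+\mathbf{i}X_{4l+4})u$, and comparing with the explicit expressions from the first step, shows $\eta^*\widetilde\omega^0=d_0u$ and $\eta^*\widetilde\omega^1=d_1u$ term by term.

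Finally, since $\eta^*$ is $\mathbb{C}$-linear and multiplicative on exterior products, $d_0u\wedge d_1u=\eta^*(\widetilde\omega^0\wedge\widetilde\omega^1)$, which is elementary strongly positive exactly when $\eta$ has rank $1$, i.e. $\eta\neq0$ (cf. the characterization (\ref{eq:elementary-strongly-positive}) with $k=1$). Computing $|\eta_l|^2=\sum_{j=1}^4(X_{4l+j}u)^2$ gives $\sum_l|\eta_l|^2=\sum_{a=1}^{4n}(X_au)^2$, so $\eta=0$ if and only if all horizontal derivatives $X_au$ vanish, i.e. if and only if ${\rm grad}\,u=0$; hence $\eta\neq0$ whenever ${\rm grad}\,u\neq0$. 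The only point requiring care, more bookkeeping than obstacle, is keeping the complex and quaternionic conventions aligned so that the \emph{single} map $\eta$ reproduces $d_0u$ through the first row of $\tau(\eta)$ and $d_1u$ through the second; this simultaneous compatibility is precisely what the conjugation pattern $(\overline b,\overline a)$ in $\tau(\eta)$, combined with the reality of $u$, delivers.
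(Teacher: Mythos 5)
Your proposal is correct, and it reaches the conclusion by a genuinely different route than the paper. The paper's proof packages the horizontal gradient into the quaternion vector $p_l=X_{4l+1}u+\mathbf{i}X_{4l+2}u+\mathbf{j}X_{4l+3}u+\mathbf{k}X_{4l+4}u$, forms the rank-one hyperhermitian matrix $\widetilde{\mathcal{M}}=(\overline{p_l}\,p_m)$, identifies the coefficient matrix of $d_0u\wedge d_1u$ with $\tau(\widetilde{\mathcal{M}})\mathbb{J}$ exactly as in \eqref{eq:Q-delta'}, and then invokes the normal form of Proposition \ref{prop:normal-form}: since $\widetilde{\mathcal{M}}$ has eigenvalues $|p|^2,0,\ldots,0$, the form is $2|p|^2\,\widetilde{\omega}^0\wedge\widetilde{\omega}^n$ in suitable coordinates, hence elementary strongly positive. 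You instead verify the definition of elementary strong positivity directly: you exhibit the single right $\mathbb{H}$-linear map $\eta=(a_l+b_l\mathbf{j})$ and check, via $\tau(\eta)$ and the reality of the $X_au$, that $\eta^*\widetilde{\omega}^0=d_0u$ and $\eta^*\widetilde{\omega}^1=d_1u$, so that $d_0u\wedge d_1u=\eta^*\widetilde{\omega}^0\wedge\eta^*\widetilde{\omega}^1$ with $\eta\neq0$. The underlying data is the same (your $\eta$ is essentially the quaternionic gradient $p$ up to conjugation), but your argument is more self-contained: it bypasses the diagonalization machinery of Proposition \ref{prop:normal-form} (which rests on Theorem 1.1 of \cite{wang-alg}) and delivers the \emph{elementary} strong positivity immediately, rather than extracting it afterwards from the rank-one eigenvalue structure. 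What the paper's route buys in exchange is uniformity with the rest of Section 3--4, where the same $\tau(\cdot)\mathbb{J}$ dictionary is used for $\triangle u$ and the horizontal quaternionic Hessian. One shared caveat, present in both arguments: the hypothesis ${\rm grad}\,u\neq0$ must be read as nonvanishing of the \emph{horizontal} gradient $(X_1u,\ldots,X_{4n}u)$, as you do; with the full Euclidean gradient (including $\partial_t u$) the statement would fail, e.g.\ for $u=t$ at the origin, where all $X_au$ vanish but $\partial_tu\neq0$, so $d_0u\wedge d_1u=0$ there.
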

 \begin{proof}
Let  $p:=(p_1,\ldots,p_n)\in\mathbb{ H}^n$ with
 $
    p_l= X_{4l+1}u +\mathbf{i}X_{4l+2}u +\mathbf{j}X_{4l+3}u +\mathbf{k}X_{4l+4}u .$
 Then as \eqref{eq:QlQm0}, we have
 \begin{equation}
 \overline{p_l} {p_m} =\widetilde{ \triangle}_{l(n+m)} +\widetilde{ \triangle}_{l m } \textbf{j},
 \end{equation}
where
\begin{equation*}
  \widetilde{ \triangle}_{AB}:=Z_{ A 0'}uZ_{B1'}u-Z_{ B
1'}uZ_{A0'}u.
\end{equation*}
 Denote $n\times n$ quaternionic matrix $\widetilde{\mathcal{M}}:=(\overline{p_l} {p_m})$. Then  $\widetilde{\mathcal{M}} =a+b\mathbf{j} $
 with $n\times n$ complex matrices
$
   a=(\widetilde{\triangle}_{ l(n+m )}u),$ $ b=(\widetilde{\triangle}_{ l   m  }u).
$
Thus
\begin{equation}\label{eq:p-delta}\begin{split}
   \tau\left ( \widetilde{\mathcal{M}}\right)\mathbb{J}&=\left(
                                       \begin{array}{rr}
                                         a &- b \\
\overline{b}&\overline{a}\\
                                       \end{array} \right)\mathbb{J}=\left(
                                       \begin{array}{rr}
                                         b &  a \\
-\overline{a}&\overline{b}\\
                                       \end{array} \right) =\left(
                                       \begin{array}{rr}
                                        \widetilde{\triangle}_{ l   m  }  &\widetilde{\triangle}_{ l(n+m )}  \\
- \overline{\widetilde{\triangle}_{ l(n+m )} }&\overline{\widetilde{\triangle}_{ l m } }\\
                                       \end{array} \right)= (\widetilde{\triangle}_{ AB  } ) ,
\end{split}\end{equation}
 since we can easily check
 \begin{equation*} \begin{split}
    \overline{ \widetilde{ \triangle}_{l(n+m)}   }  = - \widetilde{{\triangle}}_{(n+ l )m},\qquad    \overline{ \widetilde{ \triangle}_{l m }   }  = \widetilde{{\triangle}}_{(n+ l )(n+m ) }.
\end{split}\end{equation*}  Since $\mathcal{M}$ has eigenvalues $|p|^2, 0,\ldots$,
we see that
 \begin{equation*}
    d_0u\wedge d_1u= \sum_{A,B=0 }^{2n-1}Z_{A0'}u Z_{B1'}u\,\omega^A \wedge  \omega^B = \sum_{A,B=0 }^{2n-1}\widetilde{\triangle}_{ AB  } \,\omega^A \wedge  \omega^B.
 \end{equation*} is elementary strongly positive by Proposition \ref{prop:normal-form}.
\end{proof}
See \cite[Proposition 3.3]{WZ} for this proposition for $\mathbb{H}^n$ with a different proof.

\subsection{The closed   strongly positive
$2$-form given by a smooth PSH}

\begin{prop}\label{prop:hyperhermitian-nonnegative} For $u\in   C^2(\Omega)$, $u$ is PSH if and only if  the  hyperhermitian matrix
$(\overline{Q_l} {Q_m} u - 8\delta_{lm}\mathbf{i}\partial_tu )$ is  nonnegative.
\end{prop}
The tangential mapping
$\iota_{\eta,q *} $ maps horizontal  left invariant vector fields     on $\widetilde{\mathscr{H}}_q  $ to that on the
quaternionic  Heisenberg line $\mathscr{H}_{\eta,q }  $. In particular, we have
\begin{prop}\label{prop:iotaY} For $q\in \mathbb{H}^n\setminus \mathfrak{D}$,
\begin{equation}\label{eq:iotaY}
\iota_{\eta,q*} \widetilde{X_j }= \sum_{l=0 }^{n-1}\sum_{k=1}^4\left (\overline{q_l}^{\mathbb{R}}\right)_{j k} X_{4l+k}
\end{equation}
\end{prop}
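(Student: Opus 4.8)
The plan is to use the fact that $\widetilde{X_j}$ is the left-invariant vector field on $\widetilde{\mathscr{H}}_q$ generated by the one-parameter subgroup $\varsigma\mapsto(\varsigma\mathbf{i}_j,0)$, so that its value as a tangent vector at a point $(\lambda,t)$ is $\widetilde{X_j}|_{(\lambda,t)}=\frac{d}{d\varsigma}\big|_{\varsigma=0}(\lambda,t)(\varsigma\mathbf{i}_j,0)$, and then to push this curve forward through $\iota_{\eta,q}$ and recognize the result as a combination of the generating curves defining the $X_a$ in (\ref{eq:def-Y}). First I would write $\iota_{\eta,q}=\tau_\eta\circ\iota_q$, where $\iota_q$ is the group isomorphism of (\ref{eq:embedding}). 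Since $\iota_q$ is a homomorphism, $\iota_q\big((\lambda,t)(\varsigma\mathbf{i}_j,0)\big)=(q\lambda,t)(q\varsigma\mathbf{i}_j,0)$, and applying $\tau_\eta$ and differentiating gives
\begin{equation*}
\iota_{\eta,q*}\widetilde{X_j}\big|_{\xi}=\frac{d}{d\varsigma}\Big|_{\varsigma=0}\xi\cdot(\varsigma\, q\mathbf{i}_j,0),\qquad \xi:=\eta(q\lambda,t)=\iota_{\eta,q}(\lambda,t),
\end{equation*}
where I used $q(\varsigma\mathbf{i}_j)=\varsigma\, q\mathbf{i}_j$ for the real scalar $\varsigma$.

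The next step is to compute $q\mathbf{i}_j\in\mathbb{R}^{4n}$ in real coordinates. By (\ref{eq:widetildeX-q}) the $l$-th quaternionic block is $\widehat{q_l\mathbf{i}_j}=q_l^{\mathbb{R}}\,\widehat{\mathbf{i}_j}=q_l^{\mathbb{R}}e_j$, where $e_j$ is the $j$-th standard unit vector of $\mathbb{R}^4$, so the $(4l+k)$-th entry of $q\mathbf{i}_j$ is exactly $(q_l^{\mathbb{R}})_{kj}$; that is, $q\mathbf{i}_j=\sum_{l,k}(q_l^{\mathbb{R}})_{kj}\,e_{4l+k}$. The final step is a linearity observation: writing $\xi=(x,t)$ and using the multiplication law (\ref{eq:hei}) one has $\xi\cdot(\varsigma v,0)=(x+\varsigma v,\;t+2\varsigma\langle x,v\rangle)$, whose $\varsigma$-derivative at $0$ equals $(v,2\langle x,v\rangle)$ and is $\mathbb{R}$-linear in $v$. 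Hence the derivative of $\xi\cdot(\varsigma\, q\mathbf{i}_j,0)$ splits as $\sum_{l,k}(q_l^{\mathbb{R}})_{kj}\frac{d}{d\varsigma}\big|_{0}\xi\cdot(\varsigma e_{4l+k},0)=\sum_{l,k}(q_l^{\mathbb{R}})_{kj}\,X_{4l+k}|_\xi$, by the very definition (\ref{eq:def-Y}) of $X_{4l+k}$. Finally I would invoke $(\overline{q_l})^{\mathbb{R}}=(q_l^{\mathbb{R}})^t$ from (\ref{eq:iso}) to rewrite $(q_l^{\mathbb{R}})_{kj}=(\overline{q_l}^{\mathbb{R}})_{jk}$, which yields precisely (\ref{eq:iotaY}).

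I expect the only delicate point to be the linearity step, namely that the tangent vector $\frac{d}{d\varsigma}\big|_{0}\xi\cdot(\varsigma v,0)$ depends $\mathbb{R}$-linearly on $v$; this is where the specific affine form of the Heisenberg multiplication in its horizontal slot is used, and it is what lets the single quaternionic generator $q\mathbf{i}_j$ decompose into the standard horizontal fields. Everything else is bookkeeping with the matrix representation $q^{\mathbb{R}}$ and the homomorphism property of $\iota_q$. Note that no separate left-invariance argument beyond the generating-curve description of $\widetilde{X_j}$ is needed, since the computation already produces the claimed identity pointwise at every $\xi\in\mathscr{H}_{\eta,q}$.
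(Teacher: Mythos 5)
Your proof is correct, and it reaches the identity by a genuinely different evaluation of the same derivative. The paper's proof first reduces to $\eta=0$ using left-invariance of the $X_a$ under $\tau_\eta$, and then works entirely in coordinates: it expands the product $(\lambda,t)(\varsigma\mathbf{i}_j,0)$ via the multiplication law (\ref{eq:Hq0}) of $\widetilde{\mathscr{H}}_q$, rewrites the $t$-component using $B^q=\sum_l(q_l^{\mathbb{R}})^t J q_l^{\mathbb{R}}$ from (\ref{eq:Bq}), differentiates to get an explicit combination of $\partial_{x_{4l+i}}$ and $\partial_t$, and finally recognizes that combination as $\sum_{l,i}(\overline{q_l}^{\mathbb{R}})_{ji}X_{4l+i}$ by matching against the coordinate formula (\ref{eq:vector-Y}) and using (\ref{eq:iso}). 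You bypass the coordinate computation: invoking that $\iota_q$ is a group isomorphism (\ref{eq:embedding}) together with associativity in $\mathscr{H}$, the image curve becomes $\xi\cdot(\varsigma\,q\mathbf{i}_j,0)$ with $\xi=\iota_{\eta,q}(\lambda,t)$, and the $\mathbb{R}$-linearity of $v\mapsto \frac{d}{d\varsigma}|_{\varsigma=0}\,\xi\cdot(\varsigma v,0)$, immediate from the affine form of (\ref{eq:hei}), decomposes its velocity directly against the defining curves (\ref{eq:def-Y}) of the $X_a$, with no need for the explicit vector field formulas. What your route buys: the $B^q$ and $J$ bookkeeping disappears, (\ref{eq:vector-Y}) is never needed, and general $\eta$ is handled in one stroke rather than by a separate invariance reduction; your closing remark that no such reduction is needed is accurate. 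What the paper's route buys: it is a self-contained coordinate verification, so it re-derives rather than leans on the homomorphism property of $\iota_q$, and the explicit matching of $\partial_t$-coefficients makes the consistency of the two group laws visible. Both arguments ultimately rest on the same two kernels, namely $\widehat{q_l\mathbf{i}_j}=q_l^{\mathbb{R}}e_j$ from (\ref{eq:widetildeX-q}) and the transpose identity $(\overline{q_l})^{\mathbb{R}}=(q_l^{\mathbb{R}})^t$ from (\ref{eq:iso}).
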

\begin{proof} Since $\iota_{\eta,q}=\tau_\eta\circ\iota_{q}$ and $X_j$'s are invariant under $\tau_\eta$, it sufficient to prove (\ref{eq:iotaY}) for $\eta=0$.
For fixed $j=1,2,3,4$ and $l=1,\ldots, n$, note that
\begin{equation*}
   \widehat{q_l\mathbf{i}_j}=  q_l^{\mathbb{R}}\left(\begin{array}{c}  \vdots \\1\\ \vdots\end{array}\right)= \left(\begin{array}{c} \left(q_l^{\mathbb{R}}\right)_{1j}\\ \vdots\\\left(q_l^{\mathbb{R}}\right)_{4j}\end{array}\right)  .
\end{equation*}by (\ref{eq:widetildeX-q}).
  Thus for $q=(q_1,\ldots,q_n)\in \mathbb{H}^n$ and $\varsigma\in \mathbb{R}$, if we write $\iota_{q } (\lambda,t)=(q\lambda,t)=(x,t)$,  we get
  \begin{equation*}
   \begin{split}
 \iota_{q } \{(\lambda,t)(  \varsigma\mathbf{i}_j  ,0)\}&=  \left(q(\lambda+ \varsigma\mathbf{i}_j ) ,t+2\varsigma \sum_{ k=1}^{4 }B^q_{kj} \lambda_k \right)\\&
    =  \left( \ldots,x_{4l+i}+\varsigma \left(q_l^{\mathbb{R}}\right)_{ij},  \ldots,
    t +2\varsigma \sum_{l=0}^{n-1}\sum_{ k,i=1}^{4 }J_{ki} x_{4l+k}(q_l^{\mathbb{R}})_{ij} \right) ,
    \end{split}
\end{equation*}
by the multiplication (\ref{eq:Hq0}) of the group $\widetilde{\mathscr{H}}_q$ and $B^q$ in (\ref{eq:Bq}). So
\begin{equation*}
\begin{split}
  \left (\iota_{q*} \widetilde{X_j}f\right) ( x,t )&=\left.\frac d{d\varsigma}\right |_{\varsigma=0} f\left( \iota_{q } \{(\lambda,t)(  \varsigma\mathbf{i}_j  ,0)\}\right)
   \\& = \sum_{l=0 }^{n-1}\sum_{i=1}^4 \left(q_l^{\mathbb{R}}\right)_{ij}  \frac {\partial f}{\partial
   x_{4l+i}}+2 \sum_{l=0 }^{n-1}\sum_{k,i=1}^4J_{ki} x_{4l+k}\left(q_l^{\mathbb{R}}\right)_{ i j} \frac {\partial
   f}{\partial t }
  \\& =\sum_{l=0 }^{n-1}\sum_{i=1}^4 \left(\overline{q_l}^{\mathbb{R}}\right)_{j i}  X_{4l+i} f(x, t)
\end{split}
\end{equation*}by  (\ref{eq:iso}).
\end{proof}

{\it Proof of Proposition \ref{prop:hyperhermitian-nonnegative}}.
Denote
$
   \overline{\widetilde{Q}}:= \widetilde{X_1} +\mathbf{i}\widetilde{X_{ 2}}+\mathbf{j}\widetilde{X_{ 3 }}+\mathbf{k}\widetilde{X_{ 4}} .
$
Then we have
\begin{equation}\label{eq:qQ}
  \iota_{\eta,q*} \overline{\widetilde{Q}}= \sum_{j =1}^4 \iota_{\eta,q*}  \widetilde{X_j}\mathbf{i}_j= \sum_{l=0 }^{n-1}\sum_{j,k=1}^4\left(\overline{q_l}^{\mathbb{R}}\right)_{j k} X_{4l+k}\mathbf{i}_j =
   \sum_{l=0 }^{n-1} \overline{q}_l\overline{Q_l}
\end{equation}
by Proposition \ref{prop:iotaY} and definition of ${q }^{\mathbb{R}}$ in (\ref{eq:X-R}), and
$
  \iota_{\eta,q*}  {\widetilde{Q}}=
   \sum_{l=0 }^{n-1} {Q_l} {q}_l
$ by taking conjugate.
Therefore for real $u$, we have
\begin{equation}\label{eq:QQ}
\iota_{\eta,q*}\left( \widetilde{X}_{ 1}^2+ \widetilde{X}_{ 2}^2+\widetilde{X}_{ 3}^2+ \widetilde{X}_{ 4}^2\right)u= {\rm Re}\left(
\iota_{\eta,q*}\overline{\widetilde{Q}}\cdot\iota_{\eta,q*} {\widetilde{Q}} u  \right)=    {\rm Re} \left(  \sum_{l,m=0 }^{n-1}  \overline{q}_l\cdot\overline{Q_l} {Q_m} u
\cdot  {q}_m\right).
\end{equation}

On the other hand, we have
\begin{equation*}
\sum_{l,m=0 }^{n-1}  \overline{q_l}\left (\overline{Q_l} {Q_m} u + 8\delta_{lm}\mathbf{i}\partial_tu\right ) {q}_m=\left(\sum_{l=0 }^{n-1}
\overline{q_l } \overline{Q_l}\right)\left(\sum_{m=0 }^{n-1}  {Q_m}{q}_m\right) u+ 8 \sum_{l=0 }^{n-1}  \overline{q_l} \mathbf{i}{q}_l \partial_tu.
\end{equation*}
Since the horizontal  quaternionic Hessian  $(\overline{Q_l} {Q_m} u +8
\delta_{lm}\mathbf{i}\partial_t u )$ is hyperhermitian  by Proposition \ref{prop:hyperhermitian}, we see that the above quadratic form is real for any $q$. Note that $ \overline{p} \mathbf{i}p\in {\rm Im}\,\mathbb{ H}$ for any $0\neq p\in \mathbb{ H}$.
Therefore, we get
\begin{equation}\label{eq:quadratic}\begin{split}\sum_{l,m=0 }^{n-1}   \overline{q_l} \left(\overline{Q_l} {Q_m} u+ 8\delta_{lm}\mathbf{i}\partial_tu\right )
{q}_m&=
 {\rm Re} \left(  \sum_{l,m=0 }^{n-1}   \overline{q_l}\cdot\overline{Q_l} {Q_m} u \cdot  {q}_m\right)
 =(\iota_{\eta,q*}\widetilde{\triangle_q} )u
\end{split}\end{equation} for $q\in \mathbb{H}^n\setminus \mathfrak{D}$ by (\ref{eq:QQ}).

Now if  $u$ is PSH, then $ \widetilde{\triangle_q} (\iota^*_{\eta,q }u)$ is
nonnegative by applying Theorem  \ref{thm:meanvalue} to the group $\widetilde{\mathscr{H}}_q$ for $q\in \mathbb{H}^n\setminus \mathfrak{D}$.  Consequently,  (\ref{eq:quadratic}) holds for any
$q\in \mathbb{H}^n$ by continuity, i.e.  the  hyperhermitian matrix  $(\overline{Q_l} {Q_m} u + 8\delta_{lm}\mathbf{i}\partial_tu )$ is  nonnegative. Conversely, if   the  hyperhermitian matrix    is  nonnegative, we
get $u$ is is  subharmonic on each   quaternionic Heisenberg line $\mathscr{H}_{\eta,q}$ for any $q\in \mathbb{H}^n\setminus \mathfrak{D}$ and
$\eta\in \mathscr{H}^n$ by applying Theorem  \ref{thm:meanvalue} again. \hskip 80mm$\Box$

\begin{cor}\label{cor:hyperhermitian-positive} For $u\in PSH\cap C^2(\Omega)$,  $\triangle u$ is a closed   strongly positive
$2$-form.
\end{cor}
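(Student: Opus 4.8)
The plan is to separate the two assertions---closedness and strong positivity---and dispatch each by invoking results already in place. Closedness is immediate: applying Corollary \ref{p2.33} with $k=1$ gives $d_0(\triangle u)=d_1(\triangle u)=0$, so $\triangle u$ is closed by our definition of a closed form.

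For strong positivity the idea is to recognize $\triangle u$ as exactly the real $2$-form associated with the horizontal quaternionic Hessian through the correspondence of Proposition \ref{prop:normal-form}. First I would recall from \eqref{2.1000} that $\triangle u=\sum_{A,B=0}^{2n-1}\triangle_{AB}u\,\omega^A\wedge\omega^B$, so $\triangle u$ already has the shape \eqref{eq:real-2form} with coefficient matrix $M=(\triangle_{AB}u)$. The key algebraic link is the identity \eqref{eq:Q-delta'} established inside the proof of Theorem \ref{thm:delta-QMA}, namely $\tau(\overline{Q_l}Q_m u+8\delta_{lm}\mathbf{i}\partial_t u)\mathbb{J}=2(\triangle_{AB}u)$. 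By the $\mathbb{R}$-linearity of $\tau$ this reads $M=\tau(\mathcal{M})\mathbb{J}$ for the matrix $\mathcal{M}:=\tfrac12(\overline{Q_l}Q_m u+8\delta_{lm}\mathbf{i}\partial_t u)$, which is precisely the hypothesis under which Proposition \ref{prop:normal-form} applies.

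With this identification in hand, the positivity is a matter of feeding in the standing assumptions. By Proposition \ref{prop:hyperhermitian} the matrix $\mathcal{M}$ is hyperhermitian, and since $u\in PSH\cap C^2(\Omega)$, Proposition \ref{prop:hyperhermitian-nonnegative} guarantees that $\mathcal{M}$ is nonnegative. Proposition \ref{prop:normal-form} then yields simultaneously that $\triangle u$ is a genuine real $2$-form---it normalizes to $2\sum_l\nu_l\widetilde\omega^l\wedge\widetilde\omega^{l+n}$ with all $\nu_l\ge0$---and that it is strongly positive, exactly because its associated hyperhermitian matrix $\mathcal{M}$ is nonnegative. Combining this with the closedness from the first paragraph completes the proof.

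The step I expect to require the most care is the bookkeeping in the identification $M=\tau(\mathcal{M})\mathbb{J}$: one must reconcile the sign conventions between \eqref{eq:Q-delta'} (where the Hessian carries $+8\delta_{lm}\mathbf{i}\partial_t u$) and the statement of Proposition \ref{prop:hyperhermitian-nonnegative}, and keep track of the factor $\tfrac12$ coming from the definition \eqref{eq:triangle} of $\triangle_{AB}$, so that the matrix $\mathcal{M}$ whose nonnegativity is asserted is literally the one produced by Proposition \ref{prop:normal-form}. Everything else is a direct citation, so no new estimate or construction is needed.
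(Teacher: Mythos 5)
Your proposal is correct and takes essentially the same route as the paper: the paper's own one-line proof applies Proposition \ref{prop:normal-form} to the nonnegative hyperhermitian horizontal Hessian via the identity \eqref{eq:Q-delta'}, with closedness supplied by Corollary \ref{p2.33}, exactly as you do. Your added bookkeeping (the factor $\tfrac12$ so that $M=(\triangle_{AB}u)$ literally, and the sign discrepancy in the statement of Proposition \ref{prop:hyperhermitian-nonnegative}, which writes $-8\delta_{lm}\mathbf{i}\partial_t u$ while its proof and the rest of the paper use $+8\delta_{lm}\mathbf{i}\partial_t u$) only makes the citation more precise.
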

\begin{proof}  It follows from applying Proposition \ref{prop:normal-form} to nonnegative $\mathcal{M}=(\overline{Q_l} {Q_m} u - 8\delta_{lm}\mathbf{i}\partial_tu )$ and using (\ref{eq:Q-delta'}).
 \end{proof}

\begin{cor} A $C^2$ function $u$ is
pluriharmonic if and only if $\triangle u=0$.
\end{cor}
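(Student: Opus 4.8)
The plan is to route both directions of the equivalence through the vanishing of the horizontal quaternionic Hessian $\mathcal{M}:=\left(\overline{Q_l}{Q_m}u+8\delta_{lm}\mathbf{i}\partial_t u\right)$, which is hyperhermitian by Proposition \ref{prop:hyperhermitian}. First I would fix the reading of pluriharmonicity: a $C^2$ function $u$ is \emph{pluriharmonic} exactly when it is harmonic (in the SubLaplacian sense) on every quaternionic Heisenberg line $\mathscr{H}_{\eta,q}$; equivalently, both $u$ and $-u$ are plurisubharmonic, since a function is harmonic iff it is simultaneously subharmonic and superharmonic.

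Next I would establish the algebraic half, $\triangle u=0\iff\mathcal{M}=0$. By (\ref{2.1000}) we have $\triangle u=\sum_{A,B=0}^{2n-1}\triangle_{AB}u\,\omega^A\wedge\omega^B$, so $\triangle u=0$ as a form is equivalent to $\triangle_{AB}u=0$ for all $A,B$, i.e. to the vanishing of the complex $(2n\times 2n)$-matrix $(\triangle_{AB}u)$. The identity (\ref{eq:Q-delta'}) from the proof of Theorem \ref{thm:delta-QMA} reads $\tau(\mathcal{M})\mathbb{J}=2(\triangle_{AB}u)$. Since $\mathbb{J}$ is invertible and the embedding $\tau$ of (\ref{eq:tau-A}) is injective, the matrix $(\triangle_{AB}u)$ vanishes if and only if $\mathcal{M}=0$. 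This yields $\triangle u=0\iff\mathcal{M}=0$.

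It then remains to prove the analytic half, $u$ pluriharmonic $\iff\mathcal{M}=0$. By Proposition \ref{prop:hyperhermitian-nonnegative}, $u$ is PSH iff $\mathcal{M}\geq 0$; applying the same proposition to $-u$, whose horizontal quaternionic Hessian is $-\mathcal{M}$, shows $-u$ is PSH iff $-\mathcal{M}\geq 0$. Hence $u$ is pluriharmonic iff $\mathcal{M}$ is simultaneously nonnegative and nonpositive. A hyperhermitian matrix with this property must vanish: by Proposition \ref{prop:diagonal} it is unitarily diagonalizable with real eigenvalues, and those eigenvalues are forced to be both $\geq 0$ and $\leq 0$, hence $0$, so $\mathcal{M}=0$. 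Chaining the two halves gives the corollary.

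The hard part is the reduction ``$\mathcal{M}\geq 0$ and $-\mathcal{M}\geq 0\Rightarrow\mathcal{M}=0$,'' which is precisely where the spectral theory of hyperhermitian matrices (Proposition \ref{prop:diagonal}) is indispensable; without a well-defined notion of real eigenvalues the conclusion would not be immediate. An alternative to this step is to argue through the real quadratic form directly: by (\ref{eq:quadratic}) one has $\sum_{l,m}\overline{q_l}\,\mathcal{M}_{lm}\,q_m=(\iota_{\eta,q*}\widetilde{\triangle_q})u$, so pluriharmonicity says this quantity vanishes for all $q\in\mathbb{H}^n\setminus\mathfrak{D}$; being a polynomial in $q$, it vanishes for all $q$ by density of $\mathbb{H}^n\setminus\mathfrak{D}$, and a vanishing hyperhermitian form forces $\mathcal{M}=0$ after substituting eigenvectors supplied by Proposition \ref{prop:diagonal}.
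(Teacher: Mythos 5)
Your proof is correct, but it is packaged differently from the paper's. The paper argues in one stroke through the identity (\ref{eq:quadratic}): pluriharmonicity means $\widetilde{\triangle_q}\iota^*_{\eta,q}u=0$ on every line, which by (\ref{eq:quadratic}) says the hyperhermitian quadratic form $\sum_{l,m}\overline{q_l}\,\mathcal{M}_{lm}\,q_m$ vanishes for all $q\in\mathbb{H}^n\setminus\mathfrak{D}$, hence for all $q$ by continuity; this forces $\mathcal{M}=0$ directly, and then $\triangle u=0$ via (\ref{eq:Q-delta'}) exactly as in your algebraic half. You instead split pluriharmonicity into ``$u$ and $-u$ both PSH,'' invoke Proposition \ref{prop:hyperhermitian-nonnegative} twice to get $\mathcal{M}\geq 0$ and $-\mathcal{M}\geq 0$, and then need the spectral theorem (Proposition \ref{prop:diagonal}) to conclude $\mathcal{M}=0$. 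Both routes ultimately rest on the same two identities, since Proposition \ref{prop:hyperhermitian-nonnegative} is itself proved from (\ref{eq:quadratic}); what your version buys is modularity (every step cites a stated proposition rather than an equation buried in another proof, and the reduction ``harmonic $=$ subharmonic $+$ superharmonic'' is made explicit), at the cost of a detour through diagonalization that the paper's polarization-style argument avoids. Note also that the ``alternative'' you sketch at the end is precisely the paper's own proof, so you have in effect reconstructed both arguments; and your care about density of $\mathbb{H}^n\setminus\mathfrak{D}$ when passing from vanishing on non-degenerate $q$ to all $q$ is a point the paper handles the same way (by continuity of the form in $q$).
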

\begin{proof} $u$ is
pluriharmonic means that $\widetilde{\triangle_q}\iota^*_{\eta,q}u=0$ on
each   quaternionic Heisenberg line $\widetilde{\mathscr{H}}_{q}$ for any $\eta\in\mathscr{H}  $ and $ q\in\mathbb{H}^n\setminus  \mathfrak{D}  $. It holds if and only if
\begin{equation*}\sum_{l,m}  \overline{q}_l (\overline{Q_l} {Q_m} u+ 8\delta_{lm}\mathbf{i}\partial_tu ) {q}_m=0
\end{equation*} for any $q\in \mathbb{H}^n$ by
(\ref{eq:quadratic}), i.e. $(\overline{Q_l} {Q_m} u + 8\delta_{lm}\mathbf{i}\partial_tu )  =0 $, which equivalent to $\triangle u=0$ by
(\ref{eq:Q-delta'}).
\end{proof}

Recall that the  {\it tangential $1$-Cauchy-Fueter operator} on a domain $\Omega$ in the   Heisenberg group  $\mathscr{H} $
is
 $\mathscr D: C^1(\Omega,\mathbb{C}^2)\rightarrow C^0(\Omega,\mathbb{C}^{2n})$ given by
 \begin{equation*}
   ( \mathscr D f)_A=\sum_{A'=0',1'} Z_A^{A'}f_{A'},\qquad, A=0,\ldots, 2n-1,
 \end{equation*}
 where $Z_A^{0'}=Z_{A1'}$ and $Z_A^{1'}=-Z_{A0'}$. A $\mathbb{C}^2$-valued function $f=(f_{0'},f_{1'})=(f_1 +\mathbf{i}f_2,f_3
 +\mathbf{i}f_4)$ is called {\it $1$-CF} if
 $\mathscr D f=0$.

 \begin{prop} \label{cor:pluriharmonic}       Each real component    of a   $1$-CF function $f:  \mathscr
 H^n\rightarrow \mathbb C^2$ is pluriharmonic.
\end{prop}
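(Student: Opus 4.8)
The plan is to show directly that $\triangle_{AB} f_{0'}=0$ and $\triangle_{AB} f_{1'}=0$ for every $A,B$, then to upgrade this to the vanishing of $\triangle$ on the \emph{real} components $f_1,f_2,f_3,f_4$ (where $f_{0'}=f_1+\mathbf{i}f_2$ and $f_{1'}=f_3+\mathbf{i}f_4$) by a conjugation symmetry, and finally to invoke the characterization that a $C^2$ function is pluriharmonic precisely when $\triangle$ of it vanishes. Throughout I would use the $1$-CF equations $Z_{A1'}f_{0'}=Z_{A0'}f_{1'}$, $A=0,\ldots,2n-1$, together with the bracket relations of Proposition~\ref{prop:brackets-Z}.

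The vanishing of $\triangle_{AB}f_{0'}$ is immediate: in $\triangle_{AB}f_{0'}=\tfrac12(Z_{A0'}Z_{B1'}-Z_{B0'}Z_{A1'})f_{0'}$ I would substitute $Z_{B1'}f_{0'}=Z_{B0'}f_{1'}$ and $Z_{A1'}f_{0'}=Z_{A0'}f_{1'}$ to obtain $\tfrac12[Z_{A0'},Z_{B0'}]f_{1'}$, which is $0$ because each $0'$-column is abelian (Proposition~\ref{prop:brackets-Z}(1)). For $f_{1'}$ I would instead first commute the outer $0'$-operator past the inner $1'$-operator and only then apply the $1$-CF equations, arriving at
\begin{equation*}\triangle_{AB}f_{1'}=\tfrac12[Z_{B1'},Z_{A1'}]f_{0'}+\tfrac12\big([Z_{A0'},Z_{B1'}]-[Z_{B0'},Z_{A1'}]\big)f_{1'}.\end{equation*}
The first term vanishes since the $1'$-column is abelian, and the second vanishes because $[Z_{A0'},Z_{B1'}]=[Z_{B0'},Z_{A1'}]$ for every $A,B$: both sides are $0$ unless $|A-B|=n$, in which case both equal $-8\mathbf{i}\partial_t$ by (\ref{eq:vanish})--(\ref{eq:brackets-Z101}). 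Hence $\triangle f_{0'}=\triangle f_{1'}=0$.

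Since $\triangle_{AB}$ has complex coefficients, $\triangle f_{0'}=0$ does not by itself force $\triangle f_1=\triangle f_2=0$; I therefore need the conjugate information as well. The key observation is that $\widetilde f:=(\overline{f_{1'}},-\overline{f_{0'}})$ is again $1$-CF. This follows by conjugating each equation $Z_{A1'}f_{0'}=Z_{A0'}f_{1'}$ and using the conjugation rules read off from (\ref{eq:nabla-jj'-new}), namely $\overline{Z_{l0'}}=Z_{(n+l)1'}$, $\overline{Z_{l1'}}=-Z_{(n+l)0'}$ and their $(n+l)$-counterparts, which (after the index swap $l\leftrightarrow n+l$) turn the family $\{Z_{A1'}f_{0'}=Z_{A0'}f_{1'}\}$ into $\{Z_{A1'}\overline{f_{1'}}=-Z_{A0'}\overline{f_{0'}}\}$, i.e. exactly the $1$-CF equations for $\widetilde f$. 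Applying the first step to $\widetilde f$ yields $\triangle\overline{f_{0'}}=\triangle\overline{f_{1'}}=0$. By $\mathbb{C}$-linearity of $\triangle$ one gets $\triangle f_1=\tfrac12(\triangle f_{0'}+\triangle\overline{f_{0'}})=0$ and $\triangle f_2=\tfrac1{2\mathbf{i}}(\triangle f_{0'}-\triangle\overline{f_{0'}})=0$, and likewise $\triangle f_3=\triangle f_4=0$. The characterization of pluriharmonicity ($u$ pluriharmonic $\iff\triangle u=0$) then gives the claim.

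The main obstacle is the noncommutativity of the $Z_{AA'}$: the reordering steps in the computation of $\triangle_{AB}f_{1'}$ produce genuine commutator corrections, and the argument succeeds only because of the precise symmetric matching $[Z_{A0'},Z_{B1'}]=[Z_{B0'},Z_{A1'}]$, i.e. the two commutators in (\ref{eq:brackets-Z101}) carry the \emph{same} coefficient $-8\mathbf{i}\partial_t$. I would verify this matching case by case in $A,B$ (which is exactly the content of Proposition~\ref{prop:brackets-Z}(2)), since a mismatch would leave an uncancelled $\partial_t f_{1'}$ term and obstruct pluriharmonicity.
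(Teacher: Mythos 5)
Your proof is correct, and while its first half coincides in substance with the paper's, your final step takes a genuinely different route. For the vanishing of $\triangle f_{0'}$ and $\triangle f_{1'}$, the paper writes the $1$-CF system as $d_1f_{0'}-d_0f_{1'}=0$ and simply applies $d_0$ (resp.\ $d_1$) to both sides, quoting Proposition \ref{prop:d2}: $d_0^2=d_1^2=0$ and $d_0d_1=-d_1d_0$. Your component-level computation, whose crux is the matching $[Z_{A0'},Z_{B1'}]=[Z_{B0'},Z_{A1'}]$ from (\ref{eq:vanish})--(\ref{eq:brackets-Z101}), is precisely the mechanism behind the anticommutation identity $d_0d_1=-d_1d_0$, so there you re-derive a special case of what the paper cites. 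The genuine divergence is in passing to the real components: the paper uses that $\triangle u$ is a \emph{real} $2$-form when $u$ is real, i.e.\ $\rho(\textbf{j})\triangle u=\triangle u$, applies the conjugate-linear map $\rho(\textbf{j})$ to $\triangle f_1+\mathbf{i}\triangle f_2=0$ to obtain $\triangle f_1-\mathbf{i}\triangle f_2=0$, and subtracts; you instead prove that $(\overline{f_{1'}},-\overline{f_{0'}})$ is again $1$-CF via the conjugation rules $\overline{Z_{l0'}}=Z_{(n+l)1'}$, $\overline{Z_{l1'}}=-Z_{(n+l)0'}$, and rerun the first step to get $\triangle\overline{f_{0'}}=\triangle\overline{f_{1'}}=0$ directly. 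Both are sound. Your route has the advantage of bypassing the reality lemma, which the paper invokes with an unfinished ``(proof)'' placeholder (its actual content being the conjugation identities (\ref{eq:delta-anti})--(\ref{eq:delta-anti2'})); the cost is the extra verification of the conjugate-flip symmetry of the $1$-CF system, a tangential analogue of the fact that right multiplication by $\mathbf{j}$ preserves Fueter-regularity. The paper's route is shorter once reality of $\triangle u$ is granted, and that reality statement is reusable beyond this proposition.
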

\begin{proof} Note that $\sum_{A'=0',1'}Z_A^{A'}f_{A'}=0$ is equivalent to $\sum_A\sum_{A'=0',1'}Z_A^{A'}f_{A'}\omega^A=0$, which can be
written as
 \begin{equation*}
    d_1 f_{0'}- d_{0}f_{1'}=0.
 \end{equation*}
 Apply $ d_0$ on both sides to get $d_0  d_{1}f_{0'}=0$ since $d_0^2=0$. Similarly, we get $d_0  d_{1}f_{1'}=0$. Writing
 $f_{0'}=f_1+\mathbf{i}f_2$ for some real functions $f_1$ and $f_2$, we have
 \begin{equation*}
   \triangle f_1+\mathbf{i}\triangle f_2=0.
 \end{equation*}
 Note that for a real valued function $u$,  $\triangle u$ is a real $2$-form $ \mathbf{(proof)} $, i.e. $\rho(\textbf{j})\triangle u=\triangle u$. We get
 \begin{equation*}
   \triangle f_1-\mathbf{i}\triangle f_2=0.
 \end{equation*} Thus $ \triangle f_1=0=\triangle f_2$.  Similarly, we have $ \triangle f_3=0=\triangle f_4$.
 \end{proof}
See Corollary 2.1 in \cite{wang1} for this Proposition on the quaternionic space $\mathbb{H}^n$. Since $1$-regular functions are
abundant,
so are pluriharmonic functions on the   Heisenberg group.

\subsection{Closed positive currents}

An element of the dual space
$(\mathcal {D}^{2n-p}(\Omega))' $ is called a~\emph{$p$-current}. A $2k$-current $T$ is said to be \emph{positive} if we have
$T(\eta)\geq0$ for any strongly positive form $\eta\in\mathcal {D}^{2n-2k}(\Omega)$.
Although a $2n$-form is not an authentic differential form and we cannot integrate it, we can define
\begin{equation}\label{2.24}\int_\Omega F:=\int_\Omega f dV,
\end{equation}if we write $F=f~\Omega_{2n}\in L^1(\Omega,\wedge^{2n}\mathbb{C}^{2n})$,
where $dV$ is the Lebesgue measure.
In general, for a $2n$-current $F=\mu~\Omega_{2n}$ with the coefficient to be a measure $\mu$, define
\begin{equation}\label{2.273}\int_\Omega F:=\int_\Omega \mu.\end{equation}

Now for the $p$-current $F$, we define a $(p+1)$-current $d_\alpha F$ as
\begin{equation}\label{eq:generalized-sense}(d_\alpha F)(\eta):=-F(d_\alpha\eta),\qquad \alpha=0,1,
\end{equation}for any test $(2n-p-1)$-form $\eta$. We say a current $F$ is \emph{closed} if
$d_0F=d_1F=0 .$

An element  of the dual space
$(\mathcal {D}_0^{2n-p}(\Omega))' $ are called a \emph{$p$-current  of
order zero}. Obviously, the $2n$-currents are just the
distributions on $\Omega$, whereas the $2n$-currents of order
zero are Radon measures on $\Omega$.
 Let $\psi$ be a $p$-form whose coefficients are
locally integrable in $\Omega$. One can associate with $\psi$ the $p$-current $T_{\psi}$ defined by
\begin{equation*}T_{\psi}(\varphi)=\int_\Omega\psi\wedge\varphi,\quad{\rm
for\hskip 2mm any } \quad\varphi\in \mathcal {D}^{2n-p}(\Omega).
\end{equation*}

If $T$
is a $2k$-current on $\Omega$, $\psi$ is a $2l$-form on
$\Omega$ with coefficients in $C^{\infty}(\Omega)$, and $k+l\leq n$,
then the
formula\begin{equation}\label{2.27}(T\wedge\psi)(\varphi)=T(\psi\wedge\varphi)
 \qquad\text{for}~~~\varphi\in \mathcal {D}^{2n-2k-2l}(\Omega)
\end{equation}defines a $(2k+2l)$-current. In particular, if $\psi$ is a smooth function, $\psi T(\varphi)=T(\psi\varphi)$.\par

A $2k$-current $T$ is said to be \emph{positive} if we have $T(\eta)\geq0$ for any $\eta\in
C_0^\infty(\Omega,SP^{2n-2k}\mathbb{C}^{2n})$. In other words, $T$ is positive if for any $\eta\in
C_0^\infty(\Omega,SP^{2n-2k}\mathbb{C}^{2n})$,   $T\wedge\eta=\mu~\Omega_{2n}$ for some    positive
distribution $\mu$ (and hence a measure).

Let $I=(i_1,\ldots,i_{2k})$ be a multi-index such that $1\leq
i_1<\ldots< i_{2k}\leq n$. Denote by $\widehat{I}=(l_1,\ldots,l_{2n-2k})$ the
\emph{increasing complements} to $I$ in the set $\{0,1,\ldots,2n-1\}$, i.e.,
$\{i_1,\ldots,i_{2k}\}\cup\{l_1,\ldots,l_{2n-2k}\}=\{0,1,\ldots,2n-1\}$. For a $2k$-current $T$ in $\Omega$ and multi-index $I$, define
distributions $T_I$ by
$T_I(f):=\varepsilon_IT(f\omega^{\widehat{I}})$
for $f\in
C_0^\infty(\Omega)$, where $\varepsilon_I=\pm1$ is so chosen that
\begin{equation}\label{2.26}\varepsilon_I\omega^I\wedge \omega^{\widehat{I}}=\Omega_{2n}.
\end{equation} If $T$ is a current of order $0$, the
distributions $T_I$ are Radon measures and
\begin{equation}\label{3.91}T(\varphi)=\sum_I\varepsilon_IT_I(\varphi_{\widehat{I}}),
\end{equation}for $\varphi=\sum_{\widehat{I}}\varphi_{\widehat{I}}\omega^{\widehat{I}}\in \mathcal {D}^{2n-2k}(\Omega)$,
where $I$ and $\widehat{I}$ are increasing. Namely, \begin{equation}\label{eq:coef-T}
     T=\sum_IT_I\omega^I,
\end{equation}
  where the
summation is taken over increasing multi-indices of length
$2k$, holds in the sense that if we
 write $T\wedge\varphi=\mu~\Omega_{2n}$ for some Radon measure $\mu$, then  we have
\begin{equation}\label{3.92}T(\varphi)=\int_\Omega \mu=\int_\Omega T\wedge\varphi.
\end{equation}

\begin{prop}\label{p3.13}Any positive $2k$-current $T$ on $\Omega$ has measure
coefficients (i.e. is of order zero), and we can write $T=\sum_IT_I\omega^I$ for some complex Radon measures $T_I$, where the
summation is taken over all increasing multi-indices $I $.\end{prop}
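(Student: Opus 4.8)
The plan is to show that each coefficient distribution $T_I$, defined by $T_I(f)=\varepsilon_I T(f\omega^{\widehat I})$ for increasing multi-indices $I$ of length $2k$, is in fact a complex Radon measure; once this is in hand, the representation \eqref{eq:coef-T}, i.e. $T=\sum_I T_I\omega^I$, holds in the sense of \eqref{3.91}, and $T$ has order zero. I would lean on two ingredients that are already available: the classical functional-analytic fact that any distribution $S$ on $\Omega$ with $S(\varphi)\ge 0$ for all $0\le\varphi\in C_0^\infty(\Omega)$ is a positive Radon measure, and the linear-algebra statement recorded above (Proposition 5.2 in \cite{alesker2}) that the strongly positive $(2n-2k)$-forms span $\wedge^{2n-2k}\mathbb{C}^{2n}$ over $\mathbb{C}$.

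First I would manufacture positive measures out of $T$. Fix a constant-coefficient strongly positive $(2n-2k)$-element $\xi\in SP^{2n-2k}\mathbb{C}^{2n}$. Since the strongly positive elements form a convex cone, for any $0\le\varphi\in C_0^\infty(\Omega)$ the form $\varphi\,\xi$ is pointwise a nonnegative multiple of $\xi$, hence $\varphi\,\xi\in C_0^\infty(\Omega,SP^{2n-2k}\mathbb{C}^{2n})$. Positivity of $T$ then gives $T(\varphi\,\xi)\ge 0$, so $\mu_\xi(\varphi):=T(\varphi\,\xi)$ is a positive distribution, and therefore a positive Radon measure $\mu_\xi$.

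To finish, I would fix $I$ and expand the complementary monomial as $\omega^{\widehat I}=\sum_l c_l\,\xi_l$ with $c_l\in\mathbb{C}$ and $\xi_l$ strongly positive $(2n-2k)$-forms, using the span statement. Then for $0\le\varphi\in C_0^\infty(\Omega)$,
\begin{equation*}
 T_I(\varphi)=\varepsilon_I\,T(\varphi\,\omega^{\widehat I})=\varepsilon_I\sum_l c_l\,T(\varphi\,\xi_l)=\varepsilon_I\sum_l c_l\,\mu_{\xi_l}(\varphi),
\end{equation*}
which exhibits $T_I$ as a finite complex combination of the positive Radon measures $\mu_{\xi_l}$, hence itself a complex Radon measure. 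As $I$ was arbitrary, every coefficient is a measure, $T$ is of order zero, and \eqref{3.91} upgrades to the asserted decomposition $T=\sum_I T_I\omega^I$.

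The result is essentially structural, so I do not expect a serious analytic obstacle; the care lies in the bookkeeping. The two points where an error would hide are the verification that $\varphi\,\xi$ genuinely lies in the cone $SP^{2n-2k}\mathbb{C}^{2n}$ for the definition of strong positivity used here (so that $T(\varphi\,\xi)\ge 0$ may be invoked), and the legitimacy of expanding the basis monomial $\omega^{\widehat I}$ over the \emph{strongly} positive forms with complex coefficients. Both follow from the cited cone and span statements, while the passage from positive distribution to Radon measure is standard and I would simply quote it.
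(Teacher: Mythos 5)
Your proof is correct and is essentially the paper's own argument: both rest on the span of $\wedge^{2n-2k}\mathbb{C}^{2n}$ by constant strongly positive elements (Proposition 5.2/5.4 in \cite{alesker2}), positivity of $T$ applied to $\varphi\,\xi$ with $0\le\varphi\in C_0^\infty(\Omega)$ and $\xi$ constant strongly positive, and the standard fact that a positive distribution is a Radon measure. The only difference is bookkeeping: the paper picks a spanning family $\{\varphi_L\}$ of strongly positive forms together with its dual basis and writes $T=\sum T_L\widetilde{\varphi_L}$ with nonnegative measure coefficients, whereas you expand each monomial $\omega^{\widehat I}$ directly in the strongly positive forms; the two are equivalent.
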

\begin{proof}By Proposition  5.4 in \cite{alesker2}, we can find $\{\varphi_L\}\subseteq SP^{2n-2k}\mathbb{C}^{2n}$ such that any
$\eta\in\wedge^{2n-2k}\mathbb{C}^{2n}$ is a $\mathbb{C}$-linear combination of $\varphi_L$, i.e., $\eta=\sum\lambda_L\varphi_L$ for some
$\lambda_L\in\mathbb{C}$. Let $\{\widetilde{\varphi_L}\}$ be a basis of $\wedge^{2k}\mathbb{C}^{2n}$ which is dual to $\{\varphi_L\}$.
  Then $T=\sum T_L\widetilde{\varphi_L}$ with  distributional coefficients $T_L$  as
(\ref{eq:coef-T}). If $\psi$ is a nonnegative test function, $\psi\varphi_L\in C_0^\infty(\Omega,SP^{2n-2k}\mathbb{C}^{2n})$. Then
$T_L(\psi)=T(\psi\varphi_L)\geq0$ by definition. It follows that $T_L$ is a positive distribution, and so is a nonnegative measure.
\end{proof}

The following Proposition is obvious and will be used frequently.
\begin{prop}(1) (linearity) For $2n$-currents $T_1$ and $T_2$ with (Radon) measure coefficients, we have
$$\int_\Omega\alpha T_1+ \beta T_2=\alpha\int_\Omega T_1+ \beta \int_\Omega T_2.$$
(2) If $T_1\leq T_2$ as positive $2n$-currents (i.e. $\mu_1\leq\mu_2$ if we write $T_j=\mu_j\Omega_{2n}$, $j=1,2$), then $\int_\Omega
T_1\leq \int_\Omega T_2$.
\end{prop}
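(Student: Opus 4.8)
The plan is to reduce both statements directly to the definition (\ref{2.273}) of the integral of a $2n$-current together with the elementary linearity and monotonicity of integration of measures, exploiting the fact that $\wedge^{2n}\mathbb{C}^{2n}$ is one-dimensional and spanned by $\Omega_{2n}$. First I would write the two given $2n$-currents in their canonical form $T_1=\mu_1\,\Omega_{2n}$ and $T_2=\mu_2\,\Omega_{2n}$ for (Radon) measures $\mu_1,\mu_2$. Since $\Omega_{2n}$ is a basis of the one-dimensional space $\wedge^{2n}\mathbb{C}^{2n}$, this representation is unambiguous, and by (\ref{2.273}) we have $\int_\Omega T_j=\int_\Omega\mu_j$.

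For part (1), the key observation is that, with respect to the basis vector $\Omega_{2n}$, the coefficient of $\alpha T_1+\beta T_2$ is exactly $\alpha\mu_1+\beta\mu_2$, so that
\begin{equation*}
  \int_\Omega(\alpha T_1+\beta T_2)=\int_\Omega(\alpha\mu_1+\beta\mu_2)=\alpha\int_\Omega\mu_1+\beta\int_\Omega\mu_2=\alpha\int_\Omega T_1+\beta\int_\Omega T_2,
\end{equation*}
where the middle equality is just the linearity of the integral of signed (Radon) measures over $\Omega$.

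For part (2), the hypothesis that $T_1\le T_2$ as positive $2n$-currents means precisely $\mu_1\le\mu_2$ in the sense of measures. Then monotonicity of integration yields $\int_\Omega\mu_1\le\int_\Omega\mu_2$, that is $\int_\Omega T_1\le\int_\Omega T_2$, which is the claim.

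I do not expect a genuine obstacle here: once the currents are expressed through their scalar measure coefficients, everything transports verbatim from the linearity and monotonicity of measure integration. The only point meriting a line of justification is that the assignment of a $2n$-current to its measure coefficient is itself $\mathbb{C}$-linear, which rests on the one-dimensionality of $\wedge^{2n}\mathbb{C}^{2n}$ established around (\ref{eq:omega}); this is what legitimizes reading off $\alpha\mu_1+\beta\mu_2$ as the coefficient in the display above.
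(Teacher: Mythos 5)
Your proposal is correct and matches what the paper intends: the paper states this proposition without proof, calling it obvious, and your argument — writing $T_j=\mu_j\,\Omega_{2n}$ via the one-dimensionality of $\wedge^{2n}\mathbb{C}^{2n}$, invoking the definition (\ref{2.273}), and transferring linearity and monotonicity from measure integration — is exactly the intended justification. No gaps; the remark about linearity of the current-to-coefficient assignment is the right point to make explicit.
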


\begin{lem}\label{p3.9} {\rm (Stokes-type formula)}   Let $\Omega$ be a bounded   domain with smooth boundary and defining function $\rho$ (i.e. $\rho=0$ on $\partial \Omega$ and $\rho<0$ in $\Omega$) such that $|{\rm grad} \rho|=1$.
Assume that $T=\sum_AT_A\omega^{\widehat{A}}$ is a smooth $(2n-1)$-form in $\Omega$,
where $\omega^{\widehat{A}}=\omega^A  \rfloor \Omega_{2n}$. Then for   $h\in C^1(\overline{\Omega})$, we have
\begin{equation}\label{stokes}\int_\Omega hd_\alpha T=-\int_\Omega d_\alpha h\wedge T+\sum_{A=0}^{2n-1} \int_{\partial\Omega}hT_A
 Z_{A\alpha'}\rho\, dS,\end{equation}
where    $dS$ denotes the surface measure of $\partial\Omega$. In particular, if $h=0$ on
$\partial\Omega$, we have \begin{equation}
\label{eq:stokes0}\int_\Omega hd_\alpha T=-\int_\Omega d_\alpha h\wedge T,\qquad \alpha=0,1,\end{equation}
\end{lem}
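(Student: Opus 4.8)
The plan is to integrate the Leibniz identity and absorb the resulting total derivative into a boundary integral. Since $h$ is a $0$-form, Proposition \ref{prop:d2}(3) gives
\begin{equation*}
 d_\alpha(hT)=d_\alpha h\wedge T+h\,d_\alpha T,
\end{equation*}
so that $h\,d_\alpha T=d_\alpha(hT)-d_\alpha h\wedge T$ and, after integrating over $\Omega$ in the sense of (\ref{2.24})--(\ref{2.273}),
\begin{equation*}
 \int_\Omega h\,d_\alpha T=\int_\Omega d_\alpha(hT)-\int_\Omega d_\alpha h\wedge T .
\end{equation*}
Everything then reduces to evaluating the $2n$-form $d_\alpha(hT)$ and identifying its integral with the claimed boundary term.

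Next I would compute $d_\alpha(hT)$ explicitly. Writing $hT=\sum_A (hT_A)\,\omega^{\widehat A}$ with $\omega^{\widehat A}=\omega^A\rfloor\Omega_{2n}$ and using the definition of $d_\alpha$ (the constant basis forms are merely wedged on), I obtain
\begin{equation*}
 d_\alpha(hT)=\sum_{A,B}Z_{B\alpha'}(hT_A)\,\omega^B\wedge\omega^{\widehat A}.
\end{equation*}
The key algebraic point is the contraction identity $\omega^B\wedge\omega^{\widehat A}=\delta_{AB}\,\Omega_{2n}$: for $B\neq A$ the factor $\omega^B$ already occurs in $\omega^{\widehat A}$, so the wedge vanishes, while for $B=A$ the general interior-product relation $\omega^A\wedge(\omega^A\rfloor\Omega_{2n})=\Omega_{2n}$ (valid because $\Omega_{2n}$ has top degree) applies. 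Hence
\begin{equation*}
 d_\alpha(hT)=\Big(\sum_{A=0}^{2n-1}Z_{A\alpha'}(hT_A)\Big)\Omega_{2n},
 \qquad\text{so}\qquad
 \int_\Omega d_\alpha(hT)=\sum_{A=0}^{2n-1}\int_\Omega Z_{A\alpha'}(hT_A)\,dV .
\end{equation*}

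Finally I would convert each $\int_\Omega Z_{A\alpha'}(hT_A)\,dV$ into a boundary integral. The decisive observation is that every standard horizontal field $X_a$ in (\ref{eq:vector-Y}) is divergence-free for Lebesgue measure (indeed $\partial_{x_{2l-1}}(1)+\partial_t(-2x_{2l})=0$, and likewise for $X_{2l}$), and $Z_{A\alpha'}$ is a constant-coefficient complex combination of the $X_a$, hence also divergence-free. Regarding $Z_{A\alpha'}=\sum_a c_aX_a$ as a genuine vector field on $\mathbb{R}^{4n+1}$ and setting $g:=hT_A$, the ordinary divergence theorem gives
\begin{equation*}
 \int_\Omega Z_{A\alpha'}g\,dV=\int_\Omega {\rm div}(gZ_{A\alpha'})\,dV=\int_{\partial\Omega}g\,(Z_{A\alpha'}\!\cdot\nu)\,dS,
\end{equation*}
where $\nu$ is the outward unit normal. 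Because $\rho$ is a defining function with $|{\rm grad}\,\rho|=1$, we have $\nu={\rm grad}\,\rho$, and the Euclidean pairing $Z_{A\alpha'}\!\cdot{\rm grad}\,\rho$ is exactly the directional derivative $Z_{A\alpha'}\rho$. Summing over $A$ yields $\int_\Omega d_\alpha(hT)=\sum_A\int_{\partial\Omega}hT_A\,Z_{A\alpha'}\rho\,dS$, which combined with the first display proves (\ref{stokes}); when $h=0$ on $\partial\Omega$ the boundary term drops out, giving (\ref{eq:stokes0}).

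The step I expect to be most delicate is this last one: one must justify the divergence theorem up to the boundary (requiring $hT_A\in C^1(\overline\Omega)$, or an approximation argument exhausting $\Omega$ by smooth subdomains), and, more conceptually, must recognize that it is precisely the vanishing divergence of the left-invariant fields $X_a$—together with the normalization $|{\rm grad}\,\rho|=1$ that makes $\nu={\rm grad}\,\rho$—that collapses the boundary integrand to $Z_{A\alpha'}\rho$ with no spurious lower-order terms.
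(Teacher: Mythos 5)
Your proposal is correct and is essentially the paper's own proof: the paper likewise combines the Leibniz rule of Proposition \ref{prop:d2} (3), the contraction identity $\omega^B\wedge\omega^{\widehat{A}}=\delta_{AB}\,\Omega_{2n}$, and the integration-by-parts identity $\int_\Omega X_jf\,dV=\int_{\partial\Omega}fX_j\rho\,dS$, which it justifies by exactly your observation that the horizontal fields are divergence-free (the $\partial_t$-coefficient of $X_j$ is independent of $t$) together with $|{\rm grad}\,\rho|=1$ making the boundary factor equal $Z_{A\alpha'}\rho$. The only difference is cosmetic: the paper applies the Leibniz rule at the end rather than at the start.
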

\begin{proof}Note that \begin{equation*}d_\alpha(hT)=\sum_{B,A} Z_{B\alpha'}(hT_A)\omega^B\wedge\omega^{\widehat{A}}=\sum_{A} Z_{A\alpha'
}(hT_A) \Omega_{2n}.\end{equation*} Then
 \begin{equation*}\int_\Omega d_\alpha(hT)=\int_\Omega\sum_{A} Z_{A\alpha' }(hT_A) dV=\int_{\partial\Omega}\sum_{A} hT_A
 Z_{A\alpha' }\rho~dS,\end{equation*} by definition (\ref{2.24}) and integration by part,
 \begin{equation}\label{eq:Stokes}
    \int_\Omega  X_j f \, dV=\int_{\partial\Omega} f X_j \rho\,dS,
 \end{equation}
 for $j=1,\ldots 4n$. (\ref{eq:Stokes}) holds because the coefficient  of $\partial_t$ is independent of $t$.
  (\ref{stokes}) follows from the above formula and
 $d_\alpha(hT)=d_\alpha h\wedge T+hd_\alpha T$.
\end{proof}

Now let us show that $d_\alpha F$  in the generalized sense (\ref{eq:generalized-sense}), coincides with the original definition  when $F$ is a smooth $2k$-form. Let $\eta$ be arbitrary $(2n-2k-1)$-test form   compactly supported  in $\Omega$.
It follows from Lemma \ref{p3.9} that $\int_\Omega d_\alpha(F\wedge\eta)=0.$ By Proposition \ref{prop:d2} (3),
$d_\alpha(F\wedge\eta)=d_\alpha F\wedge\eta+F\wedge d_\alpha\eta$. We have
\begin{equation}\label{3.121}-\int_{\Omega}F\wedge
d_\alpha\eta=\int_{\Omega}d_\alpha F\wedge\eta,\qquad\qquad~i.e.,\qquad(d_\alpha F)(\eta)=-F(d_\alpha\eta).\end{equation}
 We also define $\triangle F$ in the
generalized sense, i.e., for each test $(2n-2k-2)$-form $\eta$,
\begin{equation}\label{3.8}
(\triangle
F)(\eta):=F(\triangle\eta).\end{equation}

As a corollary, $\triangle F$ in the generalized sense   coincides with the original definition   when $F$ is a smooth $2k$-form:
\begin{equation*}
   \int \triangle F\wedge
 \eta = \int F \wedge \triangle\eta.
\end{equation*}

\begin{cor}\label{p3.2} For $u\in PSH(\Omega)$, $\triangle u$ is
a closed positive $2$-current.
\end{cor}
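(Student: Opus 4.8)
The plan is to establish the two required properties---closedness and positivity---by separate arguments: closedness follows formally from the calculus of $d_0,d_1$ and needs no regularity beyond $u\in L^1_{\mathrm{loc}}$, while positivity is obtained by approximating $u$ with smooth plurisubharmonic functions.

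First I would prove closedness directly. Since $\triangle u$ is defined in the generalized sense (\ref{3.8}) by $(\triangle u)(\eta)=u(\triangle\eta)$, for any test form $\zeta$ of the appropriate degree I would compute $(d_0\triangle u)(\zeta)=-(\triangle u)(d_0\zeta)=-u(\triangle d_0\zeta)=-u(d_0d_1d_0\zeta)$, using the sign convention (\ref{eq:generalized-sense}) and $\triangle=d_0d_1$ on test forms. By Proposition \ref{prop:d2} one has $d_1d_0=-d_0d_1$ and $d_0^2=0$ on test forms, so $d_0d_1d_0\zeta=-d_0^2d_1\zeta=0$ and hence $d_0\triangle u=0$; the identical computation with $d_1^2=0$ gives $(d_1\triangle u)(\zeta)=-u(d_0d_1d_1\zeta)=0$. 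Thus $\triangle u$ is closed, the plurisubharmonicity not even being needed at this stage.

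For positivity I would regularize. For a subdomain $\Omega'\subset\subset\Omega$ and $\varepsilon$ small, $u_\varepsilon:=\chi_\varepsilon* u$ is smooth and plurisubharmonic on $\Omega'$ by Proposition \ref{prop:PSH-property}(6), so by Corollary \ref{cor:hyperhermitian-positive} the smooth form $\triangle u_\varepsilon$ is closed and strongly positive, and therefore defines a positive current: pairing it with a strongly positive test $(2n-2)$-form yields a nonnegative multiple of $\Omega_{2n}$, whose integral is $\geq 0$. Because $u\in L^1_{\mathrm{loc}}(\Omega)$, the mollifications converge, $u_\varepsilon\to u$ in $L^1_{\mathrm{loc}}$. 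Using the coincidence of the generalized and classical operators for smooth functions, $(\triangle u_\varepsilon)(\eta)=\int_\Omega\triangle u_\varepsilon\wedge\eta=\int_\Omega u_\varepsilon\,\triangle\eta$, and since $\triangle\eta$ has compact support this converges to $\int_\Omega u\,\triangle\eta=(\triangle u)(\eta)$. Hence for every strongly positive test form $\eta$ we get $(\triangle u)(\eta)=\lim_{\varepsilon\to0}(\triangle u_\varepsilon)(\eta)\geq 0$, so $\triangle u$ is positive; Proposition \ref{p3.13} then shows it automatically has Radon-measure coefficients.

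The routine bookkeeping aside, the one point needing care is the passage to the limit: I would fix the support $K$ of the test form first and only then choose $\Omega'\supset K$ with $\Omega'\subset\subset\Omega$ and $\varepsilon$ small enough that $u_\varepsilon$ is defined and plurisubharmonic on a neighbourhood of $K$, so that the convergence $\triangle u_\varepsilon\to\triangle u$ is genuinely local and the $L^1_{\mathrm{loc}}$ convergence suffices. I expect this---rather than any analytic difficulty---to be the main thing to get right, together with the verification that a strongly positive form induces a positive current in the sense of the paper's duality between the cones $\mathrm{SP}^{2k}\mathbb{C}^{2n}$ and the positive forms.
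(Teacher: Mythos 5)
Your proof is correct, and its positivity half is exactly the paper's argument: mollify, invoke Proposition \ref{prop:PSH-property} (6) and Corollary \ref{cor:hyperhermitian-positive} so that $\triangle u_\varepsilon$ is a closed strongly positive $2$-form, then pass to the limit using $u_\varepsilon\to u$ in $L^1_{\rm loc}$ and the identity $(\triangle u_\varepsilon)(\eta)=\int_\Omega u_\varepsilon\,\triangle\eta$; the paper phrases this coefficient-wise, as weak convergence of the distributions $\triangle_{AB}u_\varepsilon\to\triangle_{AB}u$ against test functions, which amounts to the same thing. Where you genuinely differ is closedness: the paper also derives it from the approximation, writing $(d_\alpha\triangle u)(\eta)=-\lim_{\varepsilon\to0}\triangle u_\varepsilon(d_\alpha\eta)=\lim_{\varepsilon\to0}(d_\alpha\triangle u_\varepsilon)(\eta)=0$ and appealing to Corollary \ref{p2.33} for the smooth approximants, whereas you compute directly at the current level,
\begin{equation*}
(d_0\triangle u)(\zeta)=-(\triangle u)(d_0\zeta)=-u\bigl(d_0d_1d_0\zeta\bigr)=u\bigl(d_0^2d_1\zeta\bigr)=0,
\end{equation*}
and similarly for $d_1$, using Proposition \ref{prop:d2}. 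This is in effect the proof of Corollary \ref{p2.33} transplanted to currents through the duality definitions (\ref{eq:generalized-sense}) and (\ref{3.8}), and it is consistent with the paper's sign conventions since $\triangle u$ has even degree. What it buys you is real: closedness holds for every $u\in L^1_{\rm loc}(\Omega)$ with no plurisubharmonicity, no regularization, and no interchange of limits. Your extra care in localizing the mollification --- fixing the support of the test form first, then choosing $\Omega'\subset\subset\Omega$ and $\varepsilon$ small so that $u_\varepsilon$ is defined and PSH near that support --- is a point the paper leaves implicit but is indeed required for Proposition \ref{prop:PSH-property} (6) to apply.
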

\begin{proof} If $u$ is smooth, $\triangle u$ is a closed  strongly  positive
$2$-form by   Corollary \ref{cor:hyperhermitian-positive}. When $u$ is not smooth, consider
regularization $u_{\varepsilon}=\chi_{\varepsilon}* u$ as in  Proposition \ref{prop:PSH-property} (6).   It suffices to
show that the coefficients $\triangle_{AB}u_{\varepsilon}\rightarrow
\triangle_{AB}u$ in the sense of weak convergence of distributions.
For any $\varphi\in C_0^{\infty}(\Omega)$,
\begin{equation*}\begin{aligned}\int\triangle_{AB}u_{\varepsilon}\cdot\varphi
=\int u_{\varepsilon}\cdot\triangle_{AB}\varphi\rightarrow
\int u\cdot\triangle_{AB}\varphi=(\triangle_{AB}u)(\varphi)
\end{aligned}\end{equation*}as $\varepsilon\rightarrow0$, by using integration by part (\ref{eq:Stokes}) and the standard fact that $\chi_{\varepsilon}* u\rightarrow u$ in $L^1_{\rm loc}(\Omega)$ if $u \in L^1_{\rm loc}(\Omega)$ \cite{Ma}. It follows that the currents
$\triangle u_{\varepsilon}$ converge to $\triangle u$, and so the
current $\triangle u$ is positive. For any test form $\eta$,
\begin{equation*}(d_\alpha\triangle u)(\eta)=-\triangle
u(d_\alpha\eta)=-\lim_{\varepsilon\rightarrow0}\triangle
u_\varepsilon(d_\alpha\eta)=\lim_{\varepsilon\rightarrow0}(d_\alpha\triangle
u_\varepsilon)(\eta)=0,\end{equation*}$\alpha=0,1,$ where the last identity
follows from Corollary \ref{p2.33}. Here $u_\varepsilon$ is
smooth, and $d_\alpha\triangle u_\varepsilon$ coincides with its usual
definition.\end{proof}

\section{The quaternionic Monge-Amp\`{e}re measure   over the
Heisenberg group}

For      positive $(2n-2p)$-form  $T$ and
  an arbitrary compact subset $K$, define
$\|T\|_K:=\int_K T\wedge\beta_n^p,$ where
$\beta_n$ is given by (\ref{eq:bn}). In
particular, if $T$ is a positive $2n$-current,
$\|T\|_K$ coincides with $\int_KT$ defined by
(\ref{2.273}). Let $\|\cdot\|$ be a norm on $\wedge^{2k}\mathbb{C}^{2n}$.

\begin{lem}\label{l4.1}{\rm (Lemma 3.3 in \cite{wan-wang})} For  $\eta\in\wedge^{2k}_{\mathbb{R}}\mathbb{C}^{2n}$ with $\|\eta\|\leq 1$,
$\beta_n^k\pm\varepsilon\eta$ is a positive $2k$-form for some sufficiently small
$\varepsilon>0$.
\end{lem}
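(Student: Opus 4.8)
The plan is to recognize the statement as the assertion that $\beta_n^k$ lies in the \emph{interior} of the closed convex cone $P:=\wedge^{2k}_{\mathbb{R}+}\mathbb{C}^{2n}$ of positive $2k$-forms, measured in the norm $\|\cdot\|$. Indeed, if the closed ball of some radius $\varepsilon$ about $\beta_n^k$ lies in $P$, then for every $\eta$ with $\|\eta\|\le 1$ both $\beta_n^k\pm\varepsilon\eta$ are positive, which is exactly the claim. So I would fix the bilinear pairing $\langle\cdot,\cdot\rangle$ on $\wedge^{2k}_{\mathbb{R}}\mathbb{C}^{2n}\times\wedge^{2n-2k}_{\mathbb{R}}\mathbb{C}^{2n}$ defined by $\omega\wedge\xi=\langle\omega,\xi\rangle\,\Omega_{2n}$, under which $\omega$ is positive precisely when $\langle\omega,\xi\rangle\ge 0$ for every strongly positive $\xi\in\text{SP}^{2n-2k}\mathbb{C}^{2n}$.

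With this setup the proof reduces to two estimates. Let $C:=\sup\{|\langle\zeta,\xi\rangle| : \|\zeta\|\le 1,\ \|\xi\|= 1\}$, which is finite since the pairing is a fixed bilinear form on a finite-dimensional space. Let $\delta:=\inf\{\langle\beta_n^k,\xi\rangle : \xi\in\text{SP}^{2n-2k}\mathbb{C}^{2n},\ \|\xi\|=1\}$. Granting $\delta>0$, for any $\omega=\beta_n^k+\zeta$ with $\|\zeta\|\le\varepsilon$ and any unit strongly positive $\xi$ one has $\langle\omega,\xi\rangle\ge\delta-\varepsilon C$; choosing $\varepsilon\le\delta/C$ and using homogeneity in $\xi$ gives $\langle\omega,\xi\rangle\ge 0$ for all strongly positive $\xi$, i.e. $\omega$ is positive. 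This is exactly interiority, so everything hinges on proving $\delta>0$.

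For $\delta>0$ I would first establish strict positivity $\langle\beta_n^k,\xi\rangle>0$ for every nonzero strongly positive $\xi$. By linearity and the definition of $\text{SP}^{2n-2k}\mathbb{C}^{2n}$ as the cone generated by elementary strongly positive elements, it suffices to check this on a single elementary strongly positive $\xi=\mathcal{M}.(\omega^0\wedge\omega^n\wedge\cdots\wedge\omega^{k'-1}\wedge\omega^{n+k'-1})$ with $k'=n-k$ and $\mathcal{M}\in M_{\mathbb{H}}(k',n)$ of rank $k'$: a nonzero strongly positive $\xi$ is a nonnegative combination of such, at least one entering with positive weight. Using the $\text{U}_{\mathbb{H}}(n)$-invariance $\mathcal{A}.\beta_n=\beta_n$, $\mathcal{A}.\Omega_{2n}=\Omega_{2n}$, and the transitivity of $\text{U}_{\mathbb{H}}(n)$ on the quaternionic Grassmannian of $k'$-planes, I would move the row space of $\mathcal{M}$ to the standard one, reducing to the reference form $\xi_0=\omega^0\wedge\omega^n\wedge\cdots\wedge\omega^{k'-1}\wedge\omega^{n+k'-1}$ up to a positive scalar. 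Expanding $\beta_n^k=k!\sum_{0\le l_1<\cdots<l_k\le n-1}\omega^{l_1}\wedge\omega^{n+l_1}\wedge\cdots\wedge\omega^{l_k}\wedge\omega^{n+l_k}$, the only surviving term in $\beta_n^k\wedge\xi_0$ is the one carrying the complementary indices $\{k',\ldots,n-1\}$, giving $\beta_n^k\wedge\xi_0=k!\,\Omega_{2n}$, hence $\langle\beta_n^k,\xi_0\rangle=k!>0$. Invariance of the pairing then yields strict positivity for every elementary, and hence every nonzero, strongly positive $\xi$.

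Finally, to upgrade strict positivity into the uniform bound $\delta>0$ I would show that the unit slice $\{\xi\in\text{SP}^{2n-2k}\mathbb{C}^{2n} : \|\xi\|=1\}$ is compact, so that the continuous, everywhere-positive function $\xi\mapsto\langle\beta_n^k,\xi\rangle$ attains a positive minimum there. The rays of elementary strongly positive elements are parametrized by the \emph{compact} quaternionic Grassmannian (the assignment $\mathcal{M}\mapsto\mathcal{M}.(\cdots)$ descends to $\text{Gr}_{\mathbb{H}}(k',n)$, changing $\mathcal{M}$ within a $\text{GL}_{\mathbb{H}}(k')$-orbit only rescaling the form by a positive factor), so their unit-norm representatives form a compact set not containing $0$; by Carath\'eodory the convex cone they generate is closed with compact unit slice. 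I expect this closedness/compactness of $\text{SP}^{2n-2k}\mathbb{C}^{2n}$ to be the main obstacle, the strict-positivity computation above being comparatively mechanical once the $\text{U}_{\mathbb{H}}(n)$-reduction is in place.
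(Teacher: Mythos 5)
The paper itself gives no proof of this lemma: it is imported verbatim as Lemma 3.3 of \cite{wan-wang}, so there is no internal argument to compare yours against, and I can only judge your proof on its own terms — it is correct. Your reformulation, namely that $\beta_n^k$ is an interior point of the cone of positive $2k$-elements, equivalently $\delta:=\inf\{\langle\beta_n^k,\xi\rangle:\xi\in\text{SP}^{2n-2k}\mathbb{C}^{2n},\ \|\xi\|=1\}>0$, is exactly the right way to obtain an $\varepsilon$ that is uniform in $\eta$, which is what the paper needs when it invokes this lemma in the Chern--Levine--Nirenberg estimate. The two pillars — strict positivity via the $\text{U}_{\mathbb{H}}(n)$-reduction and the computation $\beta_n^k\wedge\xi_0=k!\,\Omega_{2n}$, and compactness of the set of unit-norm elementary strongly positive elements via the Stiefel/Grassmannian parametrization — are both sound. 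Two remarks. First, the assertion you use twice without justification, that replacing $\mathcal{M}$ by $\mathcal{G}\mathcal{M}$ with $\mathcal{G}\in\text{GL}_{\mathbb{H}}(k')$ only rescales the elementary form by a \emph{positive} factor, amounts to $\det\tau(\mathcal{G})>0$ for invertible quaternionic $\mathcal{G}$; this is true and standard (from $\mathbb{J}^{-1}\tau(\mathcal{G})\mathbb{J}=\overline{\tau(\mathcal{G})}$ the determinant is real, and it is positive by connectedness of $\text{GL}_{\mathbb{H}}(k')$), but it should be stated, since both your Grassmannian reduction and your compactness step rest on it. Second, the closedness of $\text{SP}^{2n-2k}\mathbb{C}^{2n}$, which you flag as the main obstacle, can be bypassed entirely: with $S$ the compact set of unit-norm elementary strongly positive elements and $\delta_0:=\min_S\langle\beta_n^k,\cdot\rangle>0$, any $\xi\in\text{SP}^{2n-2k}\mathbb{C}^{2n}$ with $\|\xi\|=1$ is by definition a finite sum $\sum_l\lambda_l\xi_l$ with $\lambda_l\ge0$ and $\xi_l\in S$, and the triangle inequality gives $\sum_l\lambda_l\ge\|\xi\|=1$, whence $\langle\beta_n^k,\xi\rangle\ge\delta_0\sum_l\lambda_l\ge\delta_0$.
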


\begin{prop}\label{prop:estimate-C0} (Chern-Levine-Nirenberg type estimate) Let
$\Omega$ be a domain in $\mathscr{H}^n$. Let $K$ and $L$ be compact subsets
of $\Omega$ such that $L$ is contained in the interior of $K$. Then
there exists a constant $C$ depending only on $K,L$ such that for
any $u_1,\ldots u_k\in PSH(\Omega)\cap C^2(\Omega)$, we have
\begin{equation}\label{3.14}\|\triangle
u_1\wedge\ldots\wedge\triangle u_k \|_{L}\leq
C\prod_{i=1}^k\|u_i\|_{C^0(K)}.
\end{equation}  \end{prop}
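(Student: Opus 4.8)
The plan is to argue by induction on $k$, reducing the estimate for a $k$-fold product to that for a $(k-1)$-fold product by a single integration by parts. Recall that by definition $\|\triangle u_1\wedge\cdots\wedge\triangle u_k\|_L=\int_L\triangle u_1\wedge\cdots\wedge\triangle u_k\wedge\beta_n^{n-k}$. For the inductive step I fix an intermediate compact $L'$ and a cutoff $\chi\in C_0^\infty(\Omega)$ with $0\le\chi\le1$, $\chi\equiv1$ on $L$, and $\operatorname{supp}\chi\subset L'\subset\operatorname{int}K$, where $L\subset\operatorname{int}L'$. Set $T:=\triangle u_2\wedge\cdots\wedge\triangle u_k\wedge\beta_n^{n-k}$. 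Since each $\triangle u_i$ is closed (Corollary \ref{p2.33}) and $\beta_n$ has constant coefficients, $T$ is closed; and since each $\triangle u_i$ is strongly positive (Corollary \ref{cor:hyperhermitian-positive}) and $\beta_n$ is strongly positive, $T$ is a positive $(2n-2)$-form. In particular $\triangle u_1\wedge T\ge0$ is a positive measure, so that
\begin{equation*}
\|\triangle u_1\wedge\cdots\wedge\triangle u_k\|_L=\int_L\triangle u_1\wedge T\le\int_\Omega\chi\,\triangle u_1\wedge T.
\end{equation*}

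The heart of the argument is to transfer both derivatives in $\triangle u_1=d_0d_1u_1$ onto the cutoff $\chi$. Writing $\triangle u_1\wedge T=d_0(d_1u_1\wedge T)$ (using $d_0T=0$ and the Leibniz rule of Proposition \ref{prop:d2}(3)), the Stokes-type formula (Lemma \ref{p3.9}, equation \eqref{eq:stokes0}; the boundary terms vanish since $\chi$ is compactly supported) gives $\int_\Omega\chi\,\triangle u_1\wedge T=-\int_\Omega d_0\chi\wedge d_1u_1\wedge T$. I then integrate by parts once more against $d_1$, moving the derivative off $u_1$ and onto $d_0\chi$, invoking the key identity $d_0d_1=-d_1d_0$ of Proposition \ref{prop:d2}(2) together with $d_1T=0$, to obtain
\begin{equation*}
\int_\Omega\chi\,\triangle u_1\wedge T=\int_\Omega u_1\,\triangle\chi\wedge T.
\end{equation*}
It is exactly the good commutation relation $d_0d_1=-d_1d_0$ that makes the two integrations by parts recombine into the clean operator $\triangle\chi=d_0d_1\chi$ acting on the cutoff.

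The final step is to estimate $\int_\Omega u_1\,\triangle\chi\wedge T$, and here lies the main obstacle: $\triangle\chi$ need not be positive, so positivity of $T$ alone does not control $\triangle\chi\wedge T$. To handle this I use Lemma \ref{l4.1}: for a constant $c$ depending on $\chi$ (hence only on $K,L$), the real $2$-forms $c\,\beta_n\pm\triangle\chi$ are positive on $\operatorname{supp}\chi$; wedging with the positive form $T$ yields the measure estimate $|\triangle\chi\wedge T|\le c\,\beta_n\wedge T$, whence
\begin{equation*}
\left|\int_\Omega u_1\,\triangle\chi\wedge T\right|\le\|u_1\|_{C^0(K)}\int_{\operatorname{supp}\chi}|\triangle\chi\wedge T|\le c\,\|u_1\|_{C^0(K)}\int_{L'}\beta_n\wedge T.
\end{equation*}
Since $\beta_n\wedge T=\triangle u_2\wedge\cdots\wedge\triangle u_k\wedge\beta_n^{n-k+1}$, the last integral is precisely $\|\triangle u_2\wedge\cdots\wedge\triangle u_k\|_{L'}$, which by the inductive hypothesis applied to $u_2,\ldots,u_k$ on the pair $L'\subset\operatorname{int}K$ is bounded by $C'\prod_{i=2}^k\|u_i\|_{C^0(K)}$. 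Combining the three displays closes the induction, with the base case $k=0$ being the trivial bound $\|\beta_n^n\|_L=n!\,\mathrm{vol}(L)$. The points demanding the most care are the justification of the two integrations by parts and the vanishing of their boundary terms (both resting on Lemma \ref{p3.9}), the sign bookkeeping in passing $\triangle$ from $u_1$ to $\chi$, and the non-positivity of $\triangle\chi$, which is what forces the appeal to Lemma \ref{l4.1}.
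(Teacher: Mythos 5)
Your proof is correct and takes essentially the same route as the paper's: a cutoff $\chi$, two integrations by parts via the Stokes-type formula (Lemma \ref{p3.9}) and $d_0d_1=-d_1d_0$ to arrive at $\int u_1\,\triangle\chi\wedge T$, Lemma \ref{l4.1} to dominate $\triangle\chi$ by a multiple of $\beta_n$, and then iteration — the paper phrases the last step as ``repeating this procedure'' over a finite covering of $L$ by balls $D_j'\Subset D_j\subseteq K$, which your induction on $k$ with nested compacts $L\subset L'\subset K$ merely formalizes. One small wording fix: call $T=\triangle u_2\wedge\cdots\wedge\triangle u_k\wedge\beta_n^{n-k}$ \emph{strongly} positive rather than positive (wedges of strongly positive forms are strongly positive, as the paper also uses), since pairing the merely positive forms $c\,\beta_n\pm\triangle\chi$ against $T$ requires $T$ to lie in the strongly positive cone.
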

\begin{proof} By Corollary \ref{cor:hyperhermitian-positive}, $\triangle
u_1\wedge\ldots\wedge\triangle u_k $ is already   closed and strongly positive.   Since $L$ is compact, there is a covering of $L$ by a
family of balls $D_j'\Subset D_j\subseteq K$. Let $\chi\geq0$ be a smooth function equals to 1 on $\overline{D_j'}$ with support in
$D_j$. For a closed smooth  $(2n-2p)$-form $T$, we have
\begin{equation}\label{eq:int-part}\begin{aligned} \int_\Omega\chi \triangle
u_1\wedge\ldots\wedge\triangle u_p \wedge T&=-\int_\Omega d_0\chi\wedge d_1 u_1\wedge\triangle
u_2\wedge\ldots\wedge\triangle u_p \wedge T\\&
=  -\int_\Omega u_1 d_1 d_0\chi\wedge\triangle
u_2\wedge\ldots\wedge\triangle u_p \wedge T\\&=   \int_\Omega u_1 \triangle\chi\wedge\triangle
u_2\wedge\ldots\wedge\triangle u_p \wedge T\end{aligned}\end{equation}by using
Stokes-type formula  (\ref{eq:stokes0}) and Proposition \ref{prop:d-delta}.
Then
 \begin{equation*}\begin{aligned}\|\triangle
u_1 \wedge\ldots\wedge\triangle u_k\|_{L\cap\overline{D_j'}}=&
\int_{L\cap\overline{D_j'}}\triangle
u_1 \wedge\ldots\wedge\triangle u_k\wedge\beta_n^{n-k}\leq \int_{D_j}\chi\triangle
u_1\wedge  \ldots\wedge\triangle u_k\wedge\beta_n^{n-k}\\=&\int_{D_j}u_1\triangle\chi
\wedge\triangle
u_2\wedge\ldots\wedge\triangle u_k\wedge\beta_n^{n-k}\\\leq& \frac 1\varepsilon
 \|u_1\|_{L^{\infty}(K)}\|\triangle\chi \| \int_{D_j}
 \triangle
u_2\wedge\ldots\wedge\triangle u_k\wedge\beta_n^{n-k+1},
\end{aligned}\end{equation*}by using (\ref{eq:int-part}) and   Lemma \ref{l4.1}.
The result follows by repeating this procedure.
\end{proof}

{\it Proof of Theorem \ref{thm:MA-measure}}. It is sufficient to prove for any compactly supported continuous function $\chi$, the sequence
$
   \int_\Omega\chi(\triangle u_j)^n
$
is a Cauchy sequence. We can assume $\chi\in C_0^\infty (\Omega)$.
Note the following identity
\begin{equation}\label{eq:difference}\begin{split}
   (\triangle v)^n-  (\triangle u )^n   &=   \sum_{p=1}^n  \left\{   ( \triangle
v)^{p } \wedge (\triangle u )^{n-p }- ( \triangle
v)^{p-1} \wedge (\triangle u )^{n-p +1}\right\}\\&=\sum_{p=1}^n  ( \triangle
v)^{p-1} \wedge \triangle \left(v -u  \right)\wedge  (\triangle u )^{n-p }.
\end{split}\end{equation} Then we have
\begin{equation*}\begin{split}
 \left  | \int_\Omega\chi(\triangle u_j)^n\right.-&  \left.\int_\Omega\chi(\triangle u_k)^n \right |  \leq \sum_{p=1}^n\left |\int_K \chi \triangle
u_j  \wedge\ldots\wedge\triangle \left(u_j -u_k \right)\wedge\triangle
u_{k }  \wedge\ldots\wedge\triangle u_k\right| \\
=&\sum_{p=1}^n \left|\int_K \left(u_j -u_k \right)\triangle
u_j  \wedge\ldots\wedge\triangle\chi\wedge\triangle
u_{k }  \wedge\ldots\wedge\triangle u_k\right |\\
\leq &\frac {\|\triangle\chi \|}\varepsilon \left\|u_j -u_k \right\|_\infty\sum_{p=1}^n \int_K\triangle
u_j  \wedge\ldots\wedge  \beta_n \wedge\triangle
u_{k }  \wedge\ldots\wedge\triangle u_k
 \leq   C  \left\|u_j -u_k \right\|_\infty .
\end{split}\end{equation*}
as in the proof of Proposition \ref{prop:estimate-C0}, where $C$ depends on the uniform upper bound of $\left\|u_j   \right\|_\infty$.

\begin{prop}\label{prop:ineq}  Let  $u ,v\in C (\overline{\Omega})$ be    plurisubharmonic functions. Then
$
    (\triangle(u+v))^n\geq (\triangle u )^n+(\triangle v )^n.
$
 \end{prop}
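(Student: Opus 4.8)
The plan is to exploit the linearity of the operator $\triangle=d_0d_1$, which gives $\triangle(u+v)=\triangle u+\triangle v$, and then to expand the $n$-th exterior power and discard the nonnegative cross terms. First I would settle the smooth case, assuming $u,v\in PSH(\Omega)\cap C^2(\Omega)$. By Corollary \ref{cor:hyperhermitian-positive}, both $\triangle u$ and $\triangle v$ are closed strongly positive $2$-forms. Since $\triangle u$ and $\triangle v$ have even degree, they commute under the wedge product ($\alpha\wedge\beta=(-1)^{pq}\beta\wedge\alpha$ with $p=q=2$), so the binomial expansion is valid:
\begin{equation*}
(\triangle(u+v))^n=\sum_{p=0}^{n}\binom{n}{p}(\triangle u)^p\wedge(\triangle v)^{n-p}.
\end{equation*}

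The two extreme terms, $p=n$ and $p=0$, are exactly $(\triangle u)^n$ and $(\triangle v)^n$. The key step is that each mixed term $(\triangle u)^p\wedge(\triangle v)^{n-p}$ with $0<p<n$ is a positive $2n$-form, i.e. equals $\kappa_p\,\Omega_{2n}$ for some nonnegative function $\kappa_p$. This follows because the cone of strongly positive forms is closed under the wedge product: a wedge of elementary strongly positive elements is again of the form (\ref{eq:elementary-strongly-positive}), so the wedge of the strongly positive forms $\triangle u$ and $\triangle v$ is strongly positive, hence positive as a $2n$-form. Dropping these nonnegative mixed terms yields $(\triangle(u+v))^n\geq(\triangle u)^n+(\triangle v)^n$ in the smooth case.

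For general continuous $u,v$, I would regularize. Set $u_\varepsilon=\chi_\varepsilon* u$ and $v_\varepsilon=\chi_\varepsilon* v$; by Proposition \ref{prop:PSH-property} (6) these are smooth plurisubharmonic functions converging uniformly on compact subsets of $\Omega$ to $u$ and $v$. By linearity of the convolution, $u_\varepsilon+v_\varepsilon=\chi_\varepsilon*(u+v)$ converges uniformly to $u+v$. The smooth case gives, as measures,
\begin{equation*}
(\triangle(u_\varepsilon+v_\varepsilon))^n\geq(\triangle u_\varepsilon)^n+(\triangle v_\varepsilon)^n.
\end{equation*}
Testing against an arbitrary nonnegative $\varphi\in C_0^\infty(\Omega)$ and letting $\varepsilon\to0$, Theorem \ref{thm:MA-measure} applied separately to $u$, $v$ and $u+v$ ensures weak convergence of the three Monge-Amp\`ere measures to $(\triangle u)^n$, $(\triangle v)^n$ and $(\triangle(u+v))^n$; the inequality is preserved in the weak limit.

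The main obstacle is the positivity of the mixed products $(\triangle u)^p\wedge(\triangle v)^{n-p}$ in the smooth case, which rests on the stability of strong positivity under wedge multiplication in the quaternionic setting. Once that is in hand the binomial identity does the rest, and the passage to continuous functions is routine given the approximation result of Theorem \ref{thm:MA-measure}, since only the pure powers, and not any mixed Monge-Amp\`ere currents, enter the final inequality.
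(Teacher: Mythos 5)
Your proof is correct and follows essentially the same route as the paper: regularize both functions, expand $(\triangle(u_\varepsilon+v_\varepsilon))^n$ binomially, discard the mixed terms $(\triangle u_\varepsilon)^j\wedge(\triangle v_\varepsilon)^{n-j}$ as nonnegative (positivity of wedges of strongly positive forms, a fact the paper also invokes without detailed proof, e.g.\ in Proposition \ref{prop:estimate-C0}), and pass to the limit using the convergence of quaternionic Monge-Amp\`ere measures from Theorem \ref{thm:MA-measure}. The only difference is cosmetic: you spell out the commutativity justifying the binomial expansion, the identity $u_\varepsilon+v_\varepsilon=\chi_\varepsilon*(u+v)$, and the weak-limit step, all of which the paper leaves implicit.
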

\begin{proof} For smooth PSH
$u_{\varepsilon}=\chi_{\varepsilon}* u$, we have
\begin{equation*}
   (\triangle(u_{\varepsilon}+v_{\varepsilon}))^n= (\triangle u_{\varepsilon} )^n+(\triangle v_{\varepsilon} )^n+\sum_{j=1}^{n-1} C_n^j(\triangle u_{\varepsilon} )^j\wedge(\triangle v_{\varepsilon} )^{n-j}\geq (\triangle u_{\varepsilon} )^n+(\triangle v_{\varepsilon} )^n.
\end{equation*}
The result follows by taking limit $\varepsilon\rightarrow 0$ and using the convergence of the quaternionic Monge-Amp\`{e}re measure
   in Theorem \ref{thm:MA-measure}.
 \end{proof}

We need the following proposition to prove the minimum principle.
\begin{prop}\label{prop:compare}
 Let $\Omega$ be a bounded   domain with smooth boundary in $\mathscr{H} $, and let $u ,v\in C^2(\overline{\Omega})$ be    plurisubharmonic functions on $\Omega$. If $u=v$ on $\partial\Omega$ and $u\geq v$ in $\Omega$, then
 \begin{equation}\label{eq:compare}
    \int_\Omega (\triangle u)^n\leq  \int_\Omega (\triangle v)^n.
 \end{equation}
\end{prop}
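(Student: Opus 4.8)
The plan is to reduce the inequality to a boundary computation using the telescoping identity and the Stokes-type formula. First I would set $w:=u-v$, so that $w\ge 0$ on $\Omega$ and $w=0$ on $\partial\Omega$, and apply the algebraic identity (\ref{eq:difference}) together with $\triangle(v-u)=-\triangle w$ and the commutativity of wedging $2$-forms to write
\begin{equation*}
   \int_\Omega (\triangle v)^n-\int_\Omega (\triangle u)^n=-\sum_{p=1}^n\int_\Omega \triangle w\wedge T_p,\qquad T_p:=(\triangle v)^{p-1}\wedge(\triangle u)^{n-p}.
\end{equation*}
Each $T_p$ is closed by Corollary \ref{p2.33} and strongly positive by Corollary \ref{cor:hyperhermitian-positive}. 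Hence it suffices to prove $\int_\Omega\triangle w\wedge T_p\le 0$ for every $p$, since then the right-hand side above is $\ge 0$, which is exactly (\ref{eq:compare}).

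Next I would integrate by parts. Because $d_0T_p=d_1T_p=0$, Proposition \ref{prop:d2}(3) gives $\triangle w\wedge T_p=d_0d_1w\wedge T_p=d_0(d_1w\wedge T_p)$. Applying the Stokes-type formula (\ref{stokes}) with $h\equiv 1$ to the $(2n-1)$-form $S:=d_1w\wedge T_p=\sum_A S_A\,\omega^{\widehat{A}}$ then yields
\begin{equation*}
  \int_\Omega\triangle w\wedge T_p=\sum_{A=0}^{2n-1}\int_{\partial\Omega}S_A\,Z_{A0'}\rho\,dS,
\end{equation*}
and the whole problem is now to determine the sign of this boundary integral.

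The key observation is that on $\partial\Omega$ the function $w$ vanishes, while $w\ge 0$ inside and $\rho<0$ inside; thus all tangential derivatives of $w$ vanish and the full Euclidean gradient satisfies $\mathrm{grad}\,w=c\,\mathrm{grad}\,\rho$ on $\partial\Omega$, where $c=\partial_n w\le 0$ is the outward normal derivative (here $|\mathrm{grad}\,\rho|=1$). Since each $Z_{A\alpha'}$ is a (complex) first-order vector field with no zeroth-order term, this proportionality passes to $Z_{A\alpha'}w=c\,Z_{A\alpha'}\rho$ on $\partial\Omega$, so that $S|_{\partial\Omega}=c\,d_1\rho\wedge T_p$. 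I would then recognize, via $\omega^A\wedge\omega^{\widehat{A'}}=\delta_{AA'}\Omega_{2n}$, that $\sum_A(d_1\rho\wedge T_p)_A\,Z_{A0'}\rho$ is precisely the coefficient of $d_0\rho\wedge d_1\rho\wedge T_p$ with respect to $\Omega_{2n}$. By Proposition \ref{prop:wedge-positive} the $2$-form $d_0\rho\wedge d_1\rho$ is elementary strongly positive, and $T_p$ is strongly positive, so $d_0\rho\wedge d_1\rho\wedge T_p$ is a positive $2n$-form and this coefficient is $\ge 0$. Combined with $c\le 0$, the boundary integrand is $\le 0$, giving $\int_\Omega\triangle w\wedge T_p\le 0$ and hence (\ref{eq:compare}).

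The step I expect to be the main obstacle is this boundary analysis: pinning down the correct sign $c\le 0$ of the normal derivative and, more essentially, confirming that the wedge $d_0\rho\wedge d_1\rho\wedge T_p$ of strongly positive forms is a genuinely \emph{positive} $2n$-form, which rests on the positivity calculus of \cite{alesker2} and \cite{wang-alg}. A secondary, routine point is regularity: for $u,v\in C^2(\overline{\Omega})$ the coefficients of $T_p$ are only continuous, so to justify the integrations by parts cleanly I would first establish the estimate for smooth $u,v$ and then pass to the $C^2$ case using the regularizations $\chi_\varepsilon*u$, $\chi_\varepsilon*v$ of Proposition \ref{prop:PSH-property}(6) together with the weak convergence of the quaternionic Monge-Amp\`{e}re measures from Theorem \ref{thm:MA-measure}.
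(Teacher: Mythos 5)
Your proposal is correct and follows essentially the same route as the paper's own proof: the telescoping identity (\ref{eq:difference}), the Stokes-type formula (\ref{stokes}) to reduce everything to a boundary integral, the observation that on $\partial\Omega$ the form $d_1(v-u)$ is a sign-definite multiple of $d_1\rho$ (the paper writes $v-u=h\rho$ with $h\geq 0$, which is your $c=\partial_n w\leq 0$ up to sign), and the positivity of $d_0\rho\wedge d_1\rho\wedge(\triangle v)^{p-1}\wedge(\triangle u)^{n-p}$ via Proposition \ref{prop:wedge-positive} and Corollary \ref{cor:hyperhermitian-positive}. The only divergence is your final regularization step, which the paper omits entirely (it applies the Stokes-type formula directly to the $C^2$ data); be aware that your proposed fix is not quite routine, since $\chi_\varepsilon*u=\chi_\varepsilon*v$ will generally fail on the boundary of the shrunken domain, so the mollified functions no longer satisfy the hypotheses of the statement.
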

\begin{proof} We have
\begin{equation}\label{eq:boundary-term}\begin{split}
  \int_\Omega (\triangle v)^n-\int_\Omega (\triangle u )^n   = &  \sum_{p=1}^n \int_\Omega d_0\left\{d_1 \left(v -u  \right)\wedge  ( \triangle
v)^{p-1} \wedge (\triangle u )^{n-p }\right \}   \\=&\sum_{p=1}^n\sum_{A=0}^{2n-1} \int_{\partial\Omega} T_A^{p}
\cdot Z_{A0'}\rho \cdot dS
\end{split}\end{equation}by using (\ref{eq:difference}) and
Stokes-type formula  (\ref{stokes}), if we  write
\begin{equation*}
   d_1 \left(v -u  \right)\wedge  ( \triangle
v )^{p-1} \wedge (\triangle u )^{n-p }=:\sum_AT_A^{p}\,\omega^{\widehat{A}},
\end{equation*}where $\rho$ is a defining function of $\Omega$ with  $|{\rm grad} \rho|=1$, and  $\omega^{\widehat{A}}=\omega^A  \rfloor \Omega_{2n}$. Note that we have
\begin{equation}\label{eq:boundary-term'}
  \sum_{A=0}^{2n-1}  T_A^{p}
\cdot Z_{A0'}\rho (\xi)\cdot\Omega_{2n} =d_0\rho(\xi)\wedge d_1 \left(v -u  \right)\wedge  ( \triangle
v )^{p-1} \wedge (\triangle u )^{n-p }.
\end{equation} Since $u=v$ on $\partial\Omega$ and $u\geq v$ in $\Omega$,  for a point $\xi\in \partial\Omega$ with ${\rm grad} (v-u)(\xi)\neq 0$ ,
  we can write  $ v-u = h \rho$ in a neighborhood of $\xi$ for some positive smooth function $h$. Consequently, we have ${\rm grad} (v-u)(\xi)= h(\xi) {\rm grad} \rho$, and so  $Z_{A1'}(v-u)(\xi)= h(\xi)  Z_{A1'}\rho (\xi)$ on $\partial\Omega$. Thus,
\begin{equation*}
   d_0\rho(\xi)\wedge d_1 \left(v -u  \right)(\xi)=h(\xi)  d_0\rho(\xi)\wedge d_1 \rho(\xi),
\end{equation*}
which is   strongly positive  by Proposition \ref{prop:wedge-positive}. Moreover, both $\triangle
v $ and $\triangle u$ are     strongly positive for   $C^2$ plurisubharmonic functions $u$ and $v$ on $\Omega$ by Proposition \ref{cor:hyperhermitian-positive}. We find that the
 the right hand of  (\ref{eq:boundary-term'}) is a positive $2n$-form, and so the integrant in the right hand of   (\ref{eq:boundary-term}) on $\partial\Omega$  is nonnegative if ${\rm grad} (v-u)(\xi)\neq 0$, while if ${\rm grad} (v-u)(\xi)= 0$,   the integrant at $\xi $  in (\ref{eq:boundary-term}) vanishes. Therefore the difference in (\ref{eq:boundary-term}) is  nonnegative.
\end{proof}

The proof of the   minimum principle is similar to the complex case \cite{BT} and the quaternionic case \cite{alesker1}, but we need some modifications because we do not know whether the regularization  $ \chi_\varepsilon*u  $ of a PSH function $u$ on the the
Heisenberg group is decreasing as $\varepsilon\rightarrow 0+$.
\vskip 3mm
{\it Proof of Theorem \ref{thm:minimun}}. Without loss of generality, we may assume $\min_{ \partial {\Omega}} \{u-v\}=0$. Suppose that
there exists a point $(x_0,t_0)\in \Omega$ such that $u(x_0,t_0)<v (x_0,t_0)$. Denote $\eta_0=\frac 12[v (x_0,t_0)-u(x_0,t_0)]$. Then for each $0<\eta<\eta_0$, the set
$
   G(\eta):=\{(x,t)\in \Omega; u(x ,t )+\eta<v( x ,t )\}
$
is a non-empt, open, relatively compact subset of $\Omega$.  Now consider
 \begin{equation*}
   G(\eta,\delta):=\{(x,t)\in \Omega; u(x ,t )+\eta<v( x ,t )+\delta|x-x_0|^2\}.
\end{equation*}There exists an increasing function $\delta(\eta)$ such that $G(\eta,\delta)$ for $0<\delta<\delta(\eta)$ is a non-empt, open, relatively compact subset of $\Omega$. On the other hand, there exists small $\alpha(\eta,\delta)$  such that for $0<\alpha<\alpha(\eta,\delta)$, we have
$
\{\xi\in \Omega; {\rm dist} (\xi,\partial\Omega)>\alpha\}=   :\Omega_\alpha\supset G(\eta,\delta)
$ for $0<\delta<\delta(\eta/2)$, where ${\rm dist} (\xi,\zeta)=\|\xi^{-1}\zeta\|$.

We hope to apply Proposition \ref{prop:compare} to $G(\eta,\delta)$ to get a contradict, but its boundary may not be smooth. We need to regularize them. Recall that  $u_\varepsilon\rightarrow u$ and $v_\varepsilon\rightarrow v$ uniformly  as $\varepsilon\rightarrow 0+$ on any  compact subset of $\Omega$.
Define
\begin{equation*}
   G(\eta,\delta,\varepsilon):=\{(x,t)\in \Omega; u(x ,t )+\eta<v_\varepsilon( x ,t )+\delta|x-x_0|^2\},
\end{equation*}
which satisfies $ G(\eta,\delta,\varepsilon)\subset  G(3\eta/4,\delta)\subset  G( \eta/2,\delta)$ if $0<\varepsilon<\alpha(\eta,\delta)$ is sufficiently small, since $|v(x,t)-v_\varepsilon(x,t)|\leq\eta/4$ for
$(x,t)\in G(\eta/2, \delta )$. Now choose $\tau $ so small that
\begin{equation*}
   G(\eta,\delta,\varepsilon,\tau):=\{(x,t)\in \Omega; u_\tau(x ,t )+\eta<v_\varepsilon( x ,t )+\delta|x-x_0|^2\}
\end{equation*}is a non-empt, open, relatively compact subset of $\Omega$. At last we can choose positive numbers $ \eta_1<\eta_2, \delta_0  ,  \varepsilon_0, \tau_0$ such that for any
$\eta\in [\eta_1,\eta_2]   $, $0<\varepsilon<\varepsilon_0$, $0<\tau<\tau_0$,
$ G(\eta,\delta_0 ,\varepsilon,\tau)$ is a non-empt, open, relatively compact subset of $\Omega$.

For fixed $\varepsilon,\tau  $, by Sard's theorem,  almost all values of the $C^\infty$ function $v_\varepsilon( x ,t )+\delta_0 |x-x_0|^2  -u_\tau(x ,t )  $ are regular, i.e. $ G(\eta,\delta_0 ,\varepsilon,\tau)$ has smooth boundary for almost all $\eta$. Consequently,  we can take sequence of numbers $\tau_k\rightarrow0$ and $\varepsilon_k\rightarrow0$ such that $ G(\eta,\delta_0 ,\varepsilon_k,\tau_k)$ has a smooth boundary for each $k$ and almost all $\eta\in [\eta_1,\eta_2]$. Now
 apply
Proposition \ref{prop:compare} to the domain $ G(\eta,\delta_0 ,\varepsilon_k,\tau_k)$ to   get
\begin{equation}\label{eq:control1}\begin{split}
   \int (\triangle u_{\tau_k})^n&\geq \int (\triangle (v_\varepsilon+\delta_0 |x-x_0|^2) )^n\geq \int (\triangle  v_{\varepsilon_k}   )^n+\delta_0 ^n \int (\triangle|x-x_0|^2  )^n\\&
   =\int (\triangle  v_{\varepsilon_k}   )^n+4^n n!\delta_0 ^n vol(G(\eta,\delta_0 ,\varepsilon_k,\tau_k))
\end{split}\end{equation}by using Proposition \ref{prop:ineq} (2), where the integral are taken over $G(\eta,\delta_0 ,\varepsilon_k,\tau_k)$, and
\begin{equation*}
   (\triangle|x-x_0|^2)^n=\left(\sum_{l=0}^{n-1}\triangle_{l(n+l)}|x-x_0|^2\omega^l\wedge \omega^{n+l}\right)^n=4^n n!\Omega_{2n},
\end{equation*}by the expression of $\triangle_{l(n+l)}$ in (\ref{eq:brackets-Z'}).
Since $(\triangle u)^n\leq    (\triangle v)^n$ and  $\eta\rightarrow   (\triangle v)^n ( G(\eta ,\delta_0  ))$ is decreasing in $\eta$, we can choose  a continuous point  $\eta$ such that $ G(\eta,\delta_0 ,\varepsilon_k,\tau_k)$ has a smooth boundary.
 For any $\eta_1<\eta'<\eta<\eta''<\eta_2$,
 $G(\eta',\delta_0 )\supset G(\eta,\delta_0 ,\varepsilon_k,\tau_k)\supset G(\eta'',\delta_0 )$ for large $k$. So we have
\begin{equation}\label{eq:control2}\begin{split}
  \int_{G(\eta',\delta_0  )} (  \triangle u_{\tau_k}  )^n \geq  \int_{G(\eta'',\delta_0  )} (\triangle  v_{\varepsilon_k}   )^n+(4\delta_0 )^n n!  vol(G(\eta'',\delta_0  ))
\end{split}\end{equation} by (\ref{eq:control1}). Thus,
\begin{equation*}\begin{split}
   (\triangle  u )^n ( G(\eta',\delta_0  ))&\geq (\triangle  v  )^n ( G(\eta'',\delta_0  ))  +(4\delta_0 )^n n!  vol(G(\eta'',\delta_0  )),
\end{split}\end{equation*} by   convergence of quaternionic Monge-Amp\`{e}re
measures  by Theorem \ref{thm:MA-measure}. At the continuous point $\eta$,
 we have
\begin{equation*}\begin{split}
    (\triangle v)^n ( G(\eta ,\delta_0  ))&\geq  (\triangle v)^n( G(\eta ,\delta_0  ))  +(4\delta_0 )^n n! vol(G(\eta'',\delta_0  )).
\end{split}\end{equation*}This is a contradict since $G(\eta'',\delta_0  )$ is a nonempt  open  subset of $\Omega$ for $\eta''$ close to $\eta$.

 \end{document}